\numberwithin{equation}{section}
\DeclareFontFamily{U}{BOONDOX-calo}{\skewchar\font=45 }
\DeclareFontShape{U}{BOONDOX-calo}{m}{n}{
  <-> s*[1.05] BOONDOX-r-calo}{}
\DeclareFontShape{U}{BOONDOX-calo}{b}{n}{
  <-> s*[1.05] BOONDOX-b-calo}{}
\DeclareMathAlphabet{\mathcalboondox}{U}{BOONDOX-calo}{m}{n}
\SetMathAlphabet{\mathcalboondox}{bold}{U}{BOONDOX-calo}{b}{n}
\DeclareMathAlphabet{\mathbcalboondox}{U}{BOONDOX-calo}{b}{n}
\newcommand{\mcb}[1]{{\mathcalboondox #1}}
\tikzset{
    place/.style={
        circle,
        thick,
        draw=black,
        fill=gray!50,
        minimum size=20mm,
    },
        state/.style={
        circle,
        thick,
        draw=blue!75,
        fill=blue!20,
        minimum size=20mm,
    },
}
\tikzset{
    cross/.pic = {
    \draw[rotate = 45] (-0.2,0) -- (0.2,0);
    \draw[rotate = 45] (0,-0.2) -- (0, 0.2);
    }
}
\newtheorem{thm}{Theorem}[section]
\newtheorem{lem}[thm]{Lemma}
\newtheorem{cor}[thm]{Corollary}
\newtheorem{prop}[thm]{Proposition}
\newtheorem{definition}[thm]{Definition}
\newtheorem{rem}[thm]{Remark}
\title[{ Linear and Nonlinear Fractional PDEs from interacting particle systems}
]{Linear and Nonlinear Fractional PDEs from interacting particle systems }
\author{Pedro Cardoso, Patr\'icia   Gon\c calves}
\newcommand{\Addresses}{{
		\footnotesize
		Pedro Cardoso, \textsc{\noindent Institute for Applied Mathematics \\
			University of Bonn \\
			Endenicher Allee, no. 60, 53115 Bonn, Germany}\par\nopagebreak
		\textit{E-mail address}: \texttt{pgondimc@uni-bonn.de}
		
		\medskip
		
		Patr\'icia   Gon\c calves, \textsc{\noindent Center for Mathematical Analysis,  Geometry and Dynamical Systems \\
Instituto Superior T\'ecnico, Universidade de Lisboa\\
Av. Rovisco Pais, no. 1, 1049-001 Lisboa, Portugal}\par\nopagebreak
		\textit{E-mail address}: \texttt{pgoncalves@tecnico.ulisboa.pt}
}}
\begin{document}
\subjclass[2010]{60K35, 35R11, 35S15}
\begin{abstract}
In these notes, we describe the strategy for the derivation of the hydrodynamic limit for a family of long range interacting particle systems of exclusion type with symmetric rates. For $m \in \mathbb{N}:=\{1, 2, \ldots\}$ fixed, the hydrodynamic equation is $\partial_t \rho(t,u)= [-(-\Delta)^{\gamma /2} \rho^m](t,u) $. For $m=1$, this {is} the fractional equation, which is linear. On the other hand, for $m \geq 2$, this  is the fractional porous medium equation (which is nonlinear), obtained by choosing a rate which depends on the number of particles next to the initial and final position of a jump.
\end{abstract}
\maketitle

\section{Introduction}
\label{sec:1}
One important problem in statistical mechanics consists in deriving the macroscopic  evolution equations of the thermodynamic quantity (ies) of a gas from the interaction of its constituent models. According to Boltzmann, one should first find the equilibrium states of the system and then analyse the evolution out of the equilibrium. If we assume that the molecules perform continuous time random walks with certain constrains, then the evolution equations can be deduced for some dynamics. 
To answer this question, in the seventies, Spitzer in \cite{spitzer} introduced stochastic interacting particle systems (SIPS) as toy models for a variety of phenomena. 

Over the last fours decades, there has been remarkable progress in deriving those macroscopic equations  by means of rigorous mathematical results. The way to achieve it is to make a connection through a scaling parameter that links the continuous space where the solutions will be defined and the discrete space where the particles will evolve.   In this framework, many partial differential equations (PDEs) have been studied and derived from several underlying SIPS. The list of PDEs is quite vast and the same PDE can be obtained from different macroscopic dynamics. Nevertheless, the nature of these equations  highly depends on the chosen microscopic stochastic dynamics: it can be parabolic, hyperbolic, or even of fractional form. 

In these notes we focus on the derivation of fractional nonlinear PDEs, though we also discuss the fractional heat equation. More precisely, we are particularly interested in the derivation of the 
fractional porous medium equation (FPME) given for $\gamma\in(0,2)$ and $2 \leq m\in\mathbb N$ by
\begin{equation}\label{fractionalPME}
	\begin{cases}
		&\partial_{t}\rho(t,u) =  [-(-\Delta)^{\gamma/2} \rho^m](t,u) , \;\; u\in \mathbb{R}, \; t \in [0,T], \\
		&\rho(0,u) = g(u), \;\; u\in \mathbb{R},
	\end{cases}
\end{equation}
\noindent where $g:\mathbb R \rightarrow [0,1]$ is a measurable function. Above, $-(-\Delta)^{\gamma/2}$ denotes the fractional Laplacian operator, which, contrarily to the usual Laplacian operator, is nonlocal; see \eqref{eq:frac_lap} for its definition. The diffusive porous medium equation (DPME) is given by$\partial_{t}\rho(t,u) =  \Delta \rho^m(t,u)$ where $m \geq 2$ and models the diffusion of a substance through a porous medium. The quantity $\rho(t,u)$ denotes the density of the substance at time $t$ and position $u$.  Writing the equation in divergence form $\partial_{t}\rho(t,u) =  \nabla (D(\rho) \nabla  \rho (t,u)) $, the diffusion coefficient is $D(\rho)=m\rho^{m-1}$.

Since the diffusion coefficient $D(\rho)$ vanishes as the density tends to zero, this suggests that at the microscopic level, when the density of particles is low, it becomes more difficult for particles to diffuse. With this scenario in mind, in \cite{GLT} it was proposed a model of interacting particles that has a hydrodynamic limit given by the DPME, without boundary conditions. The power $m$ derived in that article is an integer number since it is related to the required number of particles close to the sites where an exchange occurs: when $m$ increases, the same occurs with the necessary quantity of particles. The DPME was also obtained in \cite{bonorino}; the novelty in that work was the presence of Dirichlet, Robin and Neumann boundary conditions.

So far, various properties of the DPME are classical; we mention a couple of them here. An interesting feature is that if the initial condition is non negative then the same is true for the solution at any time $t \geq 0$. Another characteristic is that solutions of DPME evolve with finite speed of propagation (in opposition to the heat equation, whose solutions spread out with infinite speed) : see, for example, the Barenblatt solution as an example of the finite speed of propagation property. The FPME is less studied in the literature than its diffusive counterpart. Despite that, we refer the interested reader to \cite{DQRV1,DQRV2}, where the authors studied properties of the weak  solutions of the FPME.

Very recently in \cite{CDG} the FPME (for $m=2$) was obtained as the hydrodynamic limit of a particle system. Our goal in these notes is to generalize the results of \cite{CDG} to any $m \in \mathbb{N}$, presenting the derivation of \eqref{fractionalPME} as the  hydrodynamic limit of a collection of SIPS whose jump rates are dependent on the number of particles close to where the jump occurs. In every model of this collection the exclusion rule is enforced, i.e., jumps to occupied sites are suppressed. With this dynamics, each site is occupied by at most one particle, therefore we are always dealing with exclusion process, highly studied in the SIPS literature. The case $m=1$ (where the fractional heat equation is derived) was first presented in \cite{milton} and generalized in \cite{CGJ2} in the presence of a slow barrier (where fractional Neumann and Robin boundary conditions were obtained). Since the linear setting was introduced a while ago in \cite{milton}, we focus on the nonlinear one in this work.

Besides the exclusion rule, for $m \geq 2$ jumps from $x$ to $y$ are only allowed if there exists a set (among some possibilities, depending on $x$ and $y$) of $m-1$ sites at the neighborhood of the initial and final position which is fully occupied. We then show that the FPME can be obtained as the hydrodynamic limit of this SIPS. We do so by employing the tools of the classical entropy method developed in \cite{GPV}. A direct consequence of this approach is the existence of weak solutions for the FPME, but we still need to prove the uniqueness of those solutions. We highlight that the same general strategy is applied to the case $m=1$ (as it was done in \cite{milton} and \cite{CGJ2}).  In these notes, by a matter of size we could not give all the technical details of the proofs of our results. Instead, we tried to choose the ones which are the most relevant and most enlightening for the case $m \geq 2$.

Here follows some open problems that arise naturally from our study; the first one is regarding the derivation of \eqref{fractionalPME} when $m\notin\mathbb N$. Recently in \cite{GNS} the answer to the previous question was provided for the diffusive case and for $m\in(0,2)$; and the constructed model is a kind of reinforcement/penalization of the symmetric simple exclusion process. We believe that similar arguments could provide good answers in the fractional case and we leave this to future work. 
Another compelling question has to do with the extension of our results to higher dimensions, and also adding different boundary conditions such as Dirichlet, Robin or Neumann to the hydrodynamic equation. So far, fractional boundary conditions have been obtained in \cite{stefano} and \cite{CGJ2}, but these works only deal with the linear case. We are also interested in being able to apply the results we have to extend the list of fractional PDEs that one can obtain from the hydrodynamic limit of SIPS: either by choosing an exclusion process that allows more than a particle per site or by considering systems of SIPS in order to obtain systems of coupled FPMEs.
\medskip 

\textbf{Outline of the article:} In Section \ref{sec:2} we introduce the microscopic models that we analyse, we provide the notion of weak solutions for the fractional equations that we obtain and we state our main result, namely Theorem \ref{hydlim}. In Section \ref{sec:heuristics} we give an heuristic argument to illustrate how the notion of weak solution of our equations arises from the underlying particle system. In order to do so, several intermediate results are assumed :we refer the reader to \cite{BGJ2, CDG, CGJ2} for the proofs. In Section \ref{sec:main_theo} we prove our main theorem, i.e. the hydrodynamic limit. More precisely, in Subsections \ref{sec:tight}, \ref{sec:charac} and \ref{sec:EE} we obtain tightness, characterize the limit points and prove some energy estimates. Finally, in Subsection \ref{sec:replem} we derive some results that are needed along the arguments in the previous subsections.  

\section{Models and Statement of Results }
\label{sec:2}

\subsection{Microscopic models: exclusion and porous medium}
\label{subsec:2}
In this section we introduce the two models that we will analyse in these notes. The first one is the well known symmetric exclusion process with long range interactions. The second is again an exclusion process, but the exchange rate depends strongly on the occupancy close to the sites where the exchanges occur. Both processes will be evolving in $\mathbb{Z}$ the set of integer numbers, whose elements are called \textit{sites} and denoted by Latin letters such as $x,y,z$. 
Now  we explain in detail the two dynamics. 
First we observe that our processes belong to the collection  of exclusion processes which only allow at most one particle per site. Therefore, the state  space of our models is the set $\Omega = \{0,1\}^{\mathbb{Z}}$. The elements of this space are called configurations and we denote them by  Greek letters such as $\eta$.  Given a configuration $\eta \in \Omega$ and a site $x \in \mathbb{Z}$,  we can only have two values for $\eta(x)$, i.e. $\eta(x)\in\{0,1\}$ and we interpret  that the site $x$ is empty if $\eta(x)=0$, and that the site $x$ is occupied if $\eta(x)=1$. 

Particles jump in $\mathbb Z$ according to a probability measure $p:\mathbb{Z} \rightarrow [0,1]$ defined  by 
\begin{equation}\label{transition prob}
	\forall z \in \mathbb{Z}, \quad p(z) = \frac{c_{\gamma}}{ |z|^{-\gamma-1}}   \mathbbm{1}_{ \{z \neq 0 \} }, 
\end{equation}
where $\gamma>0$ is fixed and $c_{\gamma}$ is a normalizing constant that turns $p(\cdot)$ into a probability.

Given a configuration $\eta$ and sites $x,y\in\mathbb Z$, we denote by $\eta^{x,y}$, the configuration obtained from $\eta$ where the occupation of the sites $x$ and $y$ are exchanged, i.e.  
\begin{equation*}
	\eta^{x,y}(z) := 
	\begin{cases}
		\eta(z), \quad z \ne x,y,\\
		\eta(y), \quad z=x,\\
		\eta(x), \quad z=y.
	\end{cases}
\end{equation*}

To properly define the infinitesimal generators of our Markov processes we need to introduce the notion of local functions. Therefore,  $f: \Omega \rightarrow  \mathbb{R}$ is a \textit{local function}, if there exists a finite $\Lambda \subset \mathbb{Z}$ such that  { 
	$
	\forall x \in \Lambda, \eta_1(x) = \eta_2(x)  \Rightarrow f(\eta_1) = f(\eta_2).
	$} 
With this definition in hands, for $m \in \mathbb{N}$ fixed, the continuous time Markov processes that we consider and that we denote by  $(\eta_{t})_{t\geq 0}$ have infinitesimal generator $\mcb L$  given on local functions $f: \Omega \rightarrow  \mathbb{R}$ by 
\begin{align}
	(\mcb L f)(\eta) :=  &\frac{1}{4}\sum_{x,y } p(y-x){c}^{(m)}_{x,y}(\eta)   \xi_{x,y}(\eta)[f(\eta^{x,y})-f(\eta)] \nonumber \\
	=& \frac{1}{4}\sum_{x,y }  p(y-x){c}^{(m)}_{x,y}(\eta) [f(\eta^{x,y})-f(\eta)]. \label{geninf}
\end{align}
Above, $\xi_{x,y}(\eta):=\eta(x) [1 - \eta(y)] + \eta(y) [ 1 - \eta(x) ] = [\eta(y) - \eta(y)]^2$, for any $x,y \in \mathbb{Z}$. The equality in \eqref{geninf} holds due to $\xi_{x,y}(\eta)[f(\eta^{x,y})-f(\eta)]=f(\eta^{x,y})-f(\eta)$. Above and hereinafter we will always assume that the discrete variables in a summation range over $\mathbb{Z}$, unless it is stated otherwise.

Before stating the general expression of ${c}^{(m)}_{x,y}(\eta)$, we give some motivation for its definition. For $m=1$, in order to recover the dynamics described in \cite{milton} and in \cite{CGJ2} (in the later for the choice  $\alpha=1$ and $\beta=0$), we define  ${c}^{(1)}_{x,y}(\eta)=2$. In this work, ${c}^{(m)}_{x,y}(\eta)$ is always equals to twice the rate of a exchange of particles between $x$ and $y$, according to the configuration $\eta$. For $m \geq 2$, in order to produce the diffusive porous medium equation $\partial \rho = \Delta \rho^m$, the role of ${c}^{(m)}_{x,y}(\eta)$ in \eqref{geninf} is fulfilled by
\begin{equation} \label{ratespordif}
	{c}^{(m, dif)}_{x,x+1}(\eta) := \sum_{k=1}^m \underset{j \notin \{0, 1 \} }{\prod_{j=k-m}^k} \eta(x + j).
\end{equation}
For details see  \cite{bonorino}. In particular, we get
\begin{align*}
	{c}^{(2, dif)}_{x,x+1}(\eta)& = \eta(x-1) + \eta(x+2) = \eta(x-1) + \eta \big( (x+1) + 1 \big);
	\\
	{c}^{(3, dif)}_{x,x+1}(\eta) =& \eta(x-2) \eta(x-1) +  \eta \big( (x+1) + 1\big) \big[ \eta(x-1) +  \eta \big( (x+1) + 2 \big)\big];\\
	{c}^{(4, dif)}_{x,x+1}(\eta) =& \eta(x-3) \eta(x-2) \eta(x-1) +  \eta(x-2) \eta(x-1) \eta \big( (x+1) + 1\big) \\
	+&  \eta \big( (x+1) + 1\big) \eta \big( (x+1) + 2\big) \big[ \eta(x-1) + \eta \big( (x+1) + 3\big) \big].
\end{align*}
In particular, replacing $x+1$ by $y$, we get 
\begin{align*}
	{c}^{(2, dif)}_{x,y}(\eta) &= \eta(x-1) + \eta ( y + 1 );\\
	{c}^{(3, dif)}_{x,y}(\eta) &= \eta(x-2) \eta(x-1) + \eta(x-1) \eta (y + 1 ) + \eta ( y + 1 ) \eta ( y + 2);
	\\
	{c}^{(4, dif)}_{x,y}(\eta) =& \eta(x-3) \eta(x-2) \eta(x-1) +  \eta(x-2) \eta(x-1) \eta ( y+ 1 ) \\
	+& \eta(x-1) \eta ( y + 1 ) \eta ( y + 2 ) + \eta ( y + 1) \eta ( y + 2 ) \eta ( y + 3 ).
\end{align*}
In order to have symmetric rates (i.e., ${c}^{(m)}_{x,y}(\eta)={c}^{(m)}_{y,x}(\eta)$), in \cite{CDG} we  made the choice  
\begin{align*}
	{c}^{(2)}_{x,y}(\eta) := & {c}^{(2, dif)}_{x,y}(\eta) + {c}^{(2, dif)}_{y,x}(\eta) 
	= \eta(x-1) + \eta(y+1) + \eta(y-1) + \eta(x+1).  
\end{align*}
Note that in last  case ($m=2$) the jump rate can be multiplied by a factor $r\in\{0,1,2,3,4\}$ depending on the occupancy of the sites $\{x-1,x+1,y-1,y+1\}$, see the figure below.  

\begin{figure}[htb]
	\begin{center}
		\begin{tikzpicture}[thick, scale=0.75]
			\draw[latex-] (-8.5,0) -- (6.3,0) ;
			\draw[-latex] (-8.5,0) -- (6.3,0) ;
			\foreach \x in  {-8,-7,-6,-5,-4,-3,-2,-1,0,1,2,3,4,5,6}
			\draw[shift={(\x,0)},color=black] (0pt,0pt) -- (0pt,-3pt) node[below] 
			{};

			\draw[] (-8,-0.1) node[below] {\footnotesize{$x-1$}};
			\draw[] (-7,-0.1) node[below] {\footnotesize{$x$}};
			\draw[] (-6,-0.1)  node[below] {\footnotesize{$x+1$}};
			\draw[] (-3,-0.1)  node[below] {\footnotesize{$z-1$}};
			\draw[] (-2,-0.1)  node[below] {\footnotesize{$z$}};
			\draw[] (-1,-0.1)  node[below] {\footnotesize{$z+1$}};
			\draw[] (1,-0.1) node[below] {\footnotesize{$y-2$}};
			\draw[] (2,-0.1) node[below] {\footnotesize{$y-1$}};
			\draw[] (3,-0.1) node[below] {\footnotesize{$y$}};
			\draw[] (4,-0.1)  node[below] {\footnotesize{$y+1$}};
			\draw[] (5,-0.1)  node[below] {\footnotesize{$y+2$}};
			\draw[] (6,-0.1)  node[below] {\footnotesize{$y+3$}};
			\draw [thick, teal]  (-1,1.15) -- (-1.4,0.75) ; 
			\draw [thick, teal]  (-1,0.75) -- (-1.4,1.15) ; 
			\draw [thick, teal]  (1,1.25) -- (1.4,0.7) ; 
			\draw [thick, teal]  (1,0.7) -- (1.4,1.25) ;

			\node[shape=circle,minimum size=0.5cm] (F) at (6,0.3) {};
			\node[shape=circle,minimum size=0.5cm] (G) at (5,0.3) {};
			\node[shape=circle,minimum size=0.5cm] (H) at (3,0.3) {};
			\node[shape=circle,minimum size=0.5cm] (I) at (1,0.3) {};
			\node[shape=circle,minimum size=0.5cm] (J) at (-2,0.3) {};

			\node[ball color=black!30!, shape=circle, minimum size=0.4cm] (A) at (-8,0.3) {};
			\node[ball color=black!30!, shape=circle, minimum size=0.4cm] (B) at (-7,0.3) {};
			\node[ball color=black!30!, shape=circle, minimum size=0.4cm] (C) at (-6.,0.3) {};
			\node[ball color=black!30!, shape=circle, minimum size=0.4cm] (D) at (2,0.3) {};
			\node[ball color=black!30!, shape=circle, minimum size=0.4cm] (E) at (4.,0.3) {};

			\path [->] [red] (B) edge[bend left=62] node[above] {\textcolor{red}{$ 3p(y+2-x) / 2 $}} (G);        
			\path [->] [blue] (B) edge[bend left=55] node[below]  { \textcolor{blue}{$2p(y-x)$}} (H);
			\path [->] (C) edge[bend left=45] node[above]  {\textcolor{black}{$p\big(  z - (x+1) \big) / 2$}} (J);
			\path [->] [teal] (D) edge[bend right=45] node[above]  {\textcolor{teal}{$ p\big(  z - (y-1) \big) \cdot 0   $}} (J);
			
		\end{tikzpicture}
		\bigskip
		\caption{Fractional porous medium model with long jumps. The rate of a jump between $x_0$ and $y_0$ is equal to half of the number of sites in $\{x_0-1, x_0+1, y_0-1, y_0+1\}$ which are occupied. For instance, the jump from $x$ to $y$ has rate $4/2$, since all the neighbors of $x$ and $y$ are occupied. On the other hand, the jump from $y-1$ to $z$ has rate zero (even $z$ being an empty site), since neither of the sites $z$ or $y-1$ is next to an occupied site. }\label{figure7int}
	\end{center}
\end{figure}
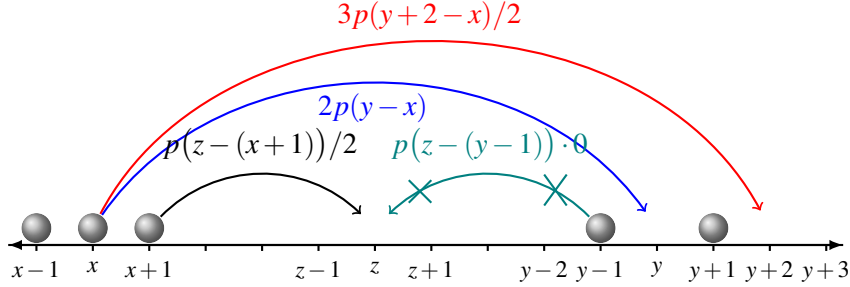


In \cite{CDG}, the following remark was crucial.
\begin{rem} \label{remsep}
	For the particular case where $y = x+1$, we get
	\begin{align*}
		\xi_{x,y}(\eta) {c}^{(2)}_{x,y}(\eta) =& \xi_{x,x+1}(\eta) [ \eta(x-1) + \eta(x) + \eta(x+1) + \eta(x+2)    ] \\
		= & \xi_{x,x+1}(\eta) [ \eta(x-1) + \eta(x+2)   +1 ] \geq  \xi_{x,x+1}(\eta).
	\end{align*}
	Above we used the fact that $\eta(z) \in \{0,1\}$ for every $z \in \mathbb{Z}$. In particular, the porous medium dynamics (for $m=2$) always allow jumps of size $1$, as long as the exclusion rule is enforced.
\end{rem}
However, for every $m \geq 3$  it is \textit{not true} that 
\begin{align*}
	\forall x \in \mathbb{Z}, \forall \eta \in \Omega, \quad \xi_{x,x+1}(\eta)[ {c}^{(m, dif)}_{x,x+1}(\eta) + {c}^{(m, dif)}_{x+1,x}(\eta) \big] \geq \xi_{x,x+1}(\eta).
\end{align*}
Therefore, in order to assure that jumps of size $1$ are always allowed (which is fundamental to avoid blocked configurations), if $m \geq 3$ we add (a multiple of) the rates of the  symmetric simple exclusion process in our dynamics. Then finally, we can give the general expression for ${c}^{(m)}_{x,y}(\eta)$.
\begin{equation} \label{defrates}
	{c}^{(m)}_{x,y}(\eta): = 
	\begin{cases}
		2, \quad & m = 1; \\
		{c}^{(m, dif)}_{x,y}(\eta) + {c}^{(m, dif)}_{y,x}(\eta) + \mathbbm{1}_{ \{ m \geq 3 \} } \mathbbm{1}_{ \{ |x - y| = 1 \} }, \quad & m \geq 2.  
	\end{cases}
\end{equation}
\begin{rem} \label{ratpor}
	For every $m \geq 2$, it holds
	\begin{align*} 
		{c}^{(m, dif)}_{x,y}(\eta) + {c}^{(m, dif)}_{y,x}(\eta):=\sum_{j=1}^{m} \Big( \prod_{i=0}^{m-2} \eta(a_{i+j,x,y})  +\prod_{i=0}^{m-2} \eta(a_{i+j+2(m-1),x,y}) \Big),
	\end{align*}
	where for every $x,y \in \mathbb{Z}$, $\{a_{1,x,y}, a_{2,x,y}, \ldots a_{4m-4,x,y}\} \subset \mathbb{Z}$ is defined by
	\begin{equation*}
		a_{j,x,y}:=
		\begin{cases}
			x-(m-j), &j \in \{1, \ldots, m-1\}; \\
			y+ j - (m-1),  &j \in \{(m-1)+1, \ldots, 2(m-1)\}; \\
			y- (3m-2 - j), &j \in \{2(m-1)+1, \ldots, 3(m-1)\}; \\
			x+ j - 3(m-1),  &j \in \{3(m-1)+1, \ldots, 4(m-1)\}.
		\end{cases}
	\end{equation*}
	This means that if $|x-y| > 1$, a particle can only jump between $x$ and $y$ if at least one of the $2m$ windows $\{a_{1,x,y}, \ldots ,a_{m-1,x,y}\}, \ldots, \{a_{m,x,y}, \ldots ,a_{2m-2,x,y}\}$, $\{a_{2m-1,x,y}, \ldots ,a_{3m-3,x,y}\}, \ldots, \{a_{3m-2,x,y}, \ldots ,a_{4m-4,x,y}\}$ is fully occupied.  
\end{rem}

An important feature of all  these models is that their invariant measures are the  Bernoulli product measures with  parameter $b \in (0,1)$ denoted by $\nu_b$ and defined on $\Omega$ by their  marginals given on $y \in \mathbb{Z}$ by $\nu_b\{ \eta \in \Omega: \eta(y)= 1\}=b.$
This means that under $\nu_b$, the random variables $(\eta(y))_{y \in \mathbb{Z}}$ are independent and all have  Bernoulli distribution of parameter $b$.  Moreover, for every $\eta \in \Omega$ and for every $x,y \in \mathbb{Z}$ it holds 
\begin{equation} \label{invexch}
	\forall x,y \in \mathbb{Z}, \forall \eta \in \Omega, \quad \nu_b (\eta^{x,y}) = \nu_b (\eta).
\end{equation}
Thus, since  $p(\cdot)$ given in \eqref{transition prob} is symmetric, the measure $\nu_b$ is reversible and in particular, it is also invariant. 
\subsection{Hydrodynamic Equations}
\label{subhydeq}

For $\gamma\in(0,2)$, the fractional Laplacian $-(-\Delta)^{\gamma/2}$ of exponent $\gamma/2$ is defined on the set of functions $G:\mathbb{R} \rightarrow \mathbb{R}$ such that
$
\int_{\mathbb{R}} \frac{G(u)}{(1+|u|)^{1+\gamma}}\, du < \infty
$
by
\begin{equation}\label{eq:frac_lap}
	[-(-\Delta)^{\gamma/2}G](u) := c_{\gamma} \lim_{\varepsilon \rightarrow 0^{+}} \int_{\mathbb{R}} \mathbbm{1}_{\{ |u-v|\geq \varepsilon \} }\frac{G(v)-G(u)}{|u-v|^{1+\gamma}} \, dv
\end{equation}
provided the limit exists.  Above, $c_\gamma$ is the constant appearing in \eqref{transition prob}. 
An equivalent definition for the fractional Laplacian given in last display  is through the Fourier transform, i.e. $\widehat{-(-\Delta)^{\gamma/2}}G(\xi)=|\xi|^\gamma\widehat G(\xi)$, nevertheless, we will not use this definition in what follows. 
Fix a measurable function $g:\mathbb{R} \rightarrow [0,1]$. In this article we are interested in analysing solutions {of} the equation \eqref{fractionalPME}. In that equation we assume that $m\in\mathbb N$. When $m=1$, the equation \eqref{fractionalPME} is the  fractional heat equation, which is a linear equation; while for $2\leq m\in\mathbb N$ it is the porous medium equation; which is a nonlinear equation.  
Now we define the space where our solutions  belong.
\subsubsection{Sobolev spaces and weak solutions }
\begin{definition}
	The Sobolev space $\mcb {H}^{\gamma/2}$ in $\mathbb{R}$ consists of all functions $f \in L^2(\mathbb{R})$ such that 
	\begin{align*}
		[f]^2_{\mcb {H}^{\gamma/2}}:= \iint_{\mathbb{R}^2} \frac{[f(u)-f(v)]^2}{|u-v|^{1+\gamma}} \, du \, dv < \infty. 
	\end{align*}
	This is a Hilbert space for the norm $\| \cdot \|_{\mcb {H}^{\gamma/2}}$ defined by
	{
		\begin{align*}
			\| f \|^{2}_{\mcb {H}^{\gamma/2}} := \int_{\mathbb{R}} [f(u)]^2 du + [f]^2_{\mcb {H}^{\gamma/2}}.
		\end{align*}
	}
\end{definition}
Below we use the notation $\langle f,g\rangle$ to denote the inner product between two functions $f,g\in  L^2(\mathbb R)$; moreover, $N \subset L^2(\mathbb{R})$ is a normed vector space with norm $\| \cdot \|_{N}$.
\begin{definition}
	We denote by $L^{2}\left(0,T; N \right)$ the set of all measurable functions $f:[0,T] \times \mathbb{R} \rightarrow \mathbb{R}$ such that $f(s, \cdot) \in N$ for almost every $s$ on $[0,T]$ and
	$\int_{0}^{T}\| f(s, \cdot) \|^{2}_{N}\; ds < \infty .$ Moreover, the set $P ([0,T], N )$ is the space of functions $G: [0,T] \times \mathbb{R} \rightarrow \mathbb{R}$ such that there exist $k \in \{0, 1, 2, \ldots, \}$ and $G_0, G_1, \ldots, G_k \in N$ so that for all $(t,u) \in [0,T] \times \mathbb{R}$, $ G(t,u) = \sum_{j=0}^{k} t^{j} G_j(u).
	$
	
\end{definition}

{Given $r \in \{1, 2, \ldots\}$,  $G: \mathbb{R} \rightarrow \mathbb{R}$ is in $C^{r}(\mathbb{R})$ if $G$ is $r$ times continuously differentiable and for $r=0$, $C^0(\mathbb{R})$ denotes the set of continuous functions in $\mathbb{R}$. Also, $G \in C_{c}^{r}(\mathbb{R})$ if $G \in C^{r}(\mathbb{R})$ and $G$ has compact support. Moreover, we use the notation $C_c^{\infty}(\mathbb{R}):=\cap_{r=0}^{\infty} C_{c}^r (\mathbb{R})$.}
Our space of test functions is $\mcb S:= P \big( [0,T], C_c^{\infty}(\mathbb{R}) \big)$. 
In what follows, given $H: [0,T] \times \mathbb{R} \rightarrow \mathbb{R}$, we denote $H(s, \cdot)$ by $H_s$, where $s \in [0,T]$. 

\begin{definition}\label{eq:dif}\textbf{(Weak solutions)}
	Let $g: \mathbb{R} \rightarrow [0,1]$ be a measurable function. We say that a function $\rho :[0,T] \times \mathbb{R} \rightarrow [0,1]$ is a weak solution of the fractional porous medium equation \eqref{fractionalPME} with initial condition $g$  if the following conditions hold:
	\begin{enumerate}
		\item
		for every $t \in [0,T]$ and for every $G \in \mcb S$, it holds  $F(t, \rho,G,  g)=0$, where
		\begin{equation}
			\begin{split} \label{eq:int_eq}
				F(t, \rho,G,  g):=&  \langle \rho_t, G_t\rangle - \langle   g, G_0\rangle \\
				-& \int_0^t \langle \rho_s, \partial_s G_s\rangle \, ds- \int_0^t \langle \rho^m_s,  [-(- \Delta)^{\gamma/2} G_s]\rangle \, ds; 
			\end{split}
		\end{equation}
		\item 
		there exists some $b \in (0,1)$ satisfying both $ \rho - b \in L^2 \big(0, T ; L^{2} ( \mathbb{R} ) \big)$ and also $ \rho^m - b^m \in L^2 (0, T ; \mcb{H}^{\gamma/2} )$.
	\end{enumerate}
\end{definition}
\begin{rem}
	In the particular case $m=1$, the condition $ \rho - b \in L^2 \big(0, T ; L^{2} ( \mathbb{R} ) \big)$ is a direct consequence of $ \rho^m - b^m \in L^2 (0, T ; \mcb{H}^{\gamma/2} )$.
\end{rem}
\begin{rem}
	The uniqueness of weak solutions as given in the previous definition is proved in Appendix C of \cite{CDG}.
\end{rem}
\subsection{Hydrodynamic Limit}
\label{subhlim}

Let us now fix a time $T > 0$ and a finite time horizon $[0,T]$. 
Since we want to observe a non-trivial evolution, we consider our Markov processes speeded up in the time scale $n^{\gamma}$ for $\gamma\in(0,2).$ We will explain latter the reason for the choice of the time scale defined above. 
We use the notation $\eta_t^n:=\eta_{t n^{\gamma}}$ for the speed up process and note that it has  the generator  $\mcb L_n:=n^{\gamma} \mcb L$.

Since in our models the unique conserved quantity is the density of particles, we define the empirical measure as the measure that gives weight $1/n$ to each particle in the following way:
\begin{equation*}
	\pi^{n}(\eta, du) := \frac{1}{n}\sum_{x}\eta(x)\delta_{x/n}(du),
\end{equation*}
where $\delta_u$ is a Dirac mass on $u \in \mathbb{R}$. 
Now we define the process of the empirical measures as $\pi^{n}_{t}(\eta,du) := \pi^{n}(\eta^n_{t},du).$
For a measurable function $G:\mathbb{R} \rightarrow \mathbb{R}$, we denote the integral of $G$ with respect to the empirical measure $\pi_{t}^{n}$, by $\langle \pi^{n}_{t},G \rangle.$

Now we define the conditions on the initial measures. To that end, for every $n \geq 1$, let $\mu_n$ be a probability measure on our state space $\Omega$. Let  $  \mcb {M}^+$ be the space of non-negative Radon measures on $\mathbb{R}$ and equipped with the weak topology. Let    $\mathbb{P}_{\mu_n}$ be the probability measure on the Skorokhod space $\mcb {D}([0,T],\Omega)$ induced by the  Markov process $(\eta_{t})_{t \in [0,T]}$ and the initial measure $\mu_n$. Moreover, let $\mathbb{Q}_{n}$ be the probability measure on $\mcb{D}([0,T],  \mcb {M}^{+})$ induced by $(\pi_{t}^{n})_{t \in [0,T]}$ and $\mathbb{P}_{\mu_n}$.

\begin{definition}\label{associated profile}
	Let $g: \mathbb{R}\rightarrow[0,1]$ be a measurable function and $(\mu_n )_{n\ \geq 1}$ a sequence of probability measures in $\Omega$. We say that $(\mu_n )_{n\ \geq 1}$ is \textit{associated with $ g(\cdot)$} if for any $G \in C_c^0(\mathbb{R})$ and any $\delta > 0$, 
	\begin{equation*}
		\lim _{n \rightarrow \infty } \mu_n \Big( \eta \in \Omega : \Big|  \frac{1}{n} \sum_{x} G \big(\tfrac{x}{n} \big) \eta(x)   - \int_{\mathbb{R}} G(u)g(u) \, du \, \Big|    > \delta \Big)= 0.
	\end{equation*} 
\end{definition}

The previous assumption requires the validity of a law of large numbers for the empirical measure at time $t=0$., i.e.  the sequence of  random measures $\{\pi^n_0\}_{n\in\mathbb N}$ converges, as $n \rightarrow \infty$ to the deterministic measure  $\pi_0(du):= g(u)du$.
Under last  assumption, our  goal is to show that for any time $t$ the same result is true but the density will be  a weak solution to a PDE, called the \textit{hydrodynamic equation.}

In some of our results below we will need some extra conditions on the initial measure of the systems. To that end we state the following condition, that will be imposed for some models:
\begin{equation}\label{defcb}
	\textrm{\textbf{Assumption (A)}:} \quad \exists C_b>0: \;  \forall n \geq 1, \quad		H ( \mu_n | \nu_b) \leq C_b n.
\end{equation}
Now we are in  position to state the main result of these notes.

\begin{thm}\label{hydlim}
	Let $ g: \mathbb{R} \rightarrow [0,1]$ be a measurable function. Let $(\mu_n)_{n \geq 1}$ be a sequence of probability measures in $\Omega$ associated to the profile $ g(\cdot)$ and satisfying Assumption (A)
	for some $b \in (0,1)$. Then, for any $t \in [0,T]$, any $G \in C_c^0(\mathbb{R})$ and any $\delta>0$,
	\begin{equation*}\label{limHidreform}
		\lim_{n \rightarrow \infty}\mathbb{P}_{\mu_n}\Big(  \eta_{\cdot}^n \in \mcb {D}([0,T], \Omega): \Big| \frac{1}{n} \sum_{x} G\big(\tfrac{x}{n}\big) \eta_t^n(x) - \int_{\mathbb{R}} G(u)\rho(t,u)\,du  \,  \Big| > \delta \Big)=0,
	\end{equation*}
	where $\rho(t, \cdot)$ is the unique weak solution of \eqref{fractionalPME}.
\end{thm}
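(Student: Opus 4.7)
The plan is to follow the classical entropy method of Guo--Papanicolaou--Varadhan. The proof has four components: tightness of the laws $\{\mathbb{Q}_n\}_{n\geq 1}$ of the empirical measures in $\mcb{D}([0,T], \mcb{M}^+)$; identification of every limit point $\mathbb{Q}$ as being concentrated on trajectories $\pi_t(du) = \rho(t,u)\, du$ with $\rho$ a weak solution of \eqref{fractionalPME}; an energy estimate establishing condition (2) of Definition \ref{eq:dif}; and uniqueness of weak solutions, already available from Appendix C of \cite{CDG}. Uniqueness together with tightness identifies $\mathbb{Q}$ as a Dirac mass at $\rho(t,u)\, du$, and since convergence in law to a constant is equivalent to convergence in probability, this yields the limit in the statement.

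For tightness, since the empirical measures have locally bounded mass, it suffices by Aldous' criterion to control the real-valued processes $\{\langle \pi^n_t, G\rangle\}_{t\in[0,T]}$ for $G$ in a dense subset of $C_c^\infty(\mathbb{R})$. Dynkin's formula gives
\begin{equation*}
\langle \pi^n_t, G\rangle = \langle \pi^n_0, G\rangle + \int_0^t n^{\gamma}\,\mcb L \langle \pi^n_s, G\rangle \, ds + M^{n,G}_t .
\end{equation*}
A change of variables rewrites the integrand as a double sum involving the kernel $n^{\gamma}p(y-x)$, which — thanks precisely to the choice of time scale $n^{\gamma}$ matched to the tails of $p(\cdot)$ in \eqref{transition prob} — has a non-degenerate continuum limit related to the fractional Laplacian. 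Uniform boundedness of $\eta$, $c^{(m)}_{x,y}$ and $G$, together with a $O(n^{-1})$ bound on the quadratic variation of $M^{n,G}_t$, yields moment and oscillation bounds sufficient for Aldous' criterion.

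To characterize the limit points, let $\mathbb{Q}$ be such a limit. The exclusion constraint implies $\mathbb{Q}$ is supported on absolutely continuous trajectories with density $\rho \in [0,1]$. Letting $n\to\infty$ in the martingale identity above, using the association hypothesis at $t=0$, the $L^2$ vanishing of $M^{n,G}$, and a discrete-to-continuum estimate replacing $n^{\gamma}\mcb L G(x/n)$ by $[-(-\Delta)^{\gamma/2}G](x/n)$, reduces $F(t,\rho,G,g)=0$ in \eqref{eq:int_eq} to a replacement lemma: the microscopic expression $c^{(m)}_{x,y}(\eta)[\eta(x)-\eta(y)]$ paired against a smooth test function must be replaced by a macroscopic quantity yielding $\rho^m$. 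This is handled by the one-block and two-block estimates, which rely on the entropy bound of Assumption (A) and on the Dirichlet form of $\mcb L_n$. The key algebraic input is Remark \ref{ratpor}: since $c^{(m,dif)}_{x,y}(\eta) + c^{(m,dif)}_{y,x}(\eta)$ decomposes as a sum of products of $m-1$ occupation variables clustered around $x$ and around $y$, each window is replaceable by $\rho^{m-1}$ after local averaging, and the combination with the exchange factor produces the nonlinear term $\rho^m$. The energy estimate needed for part (2) of Definition \ref{eq:dif} is then obtained by a Varadhan-type variational argument: from the entropy inequality applied with an appropriate perturbation of $\nu_b$ and a Feynman--Kac bound, one deduces a uniform $L^2$ control that, after passing to the limit, identifies $\rho^m - b^m$ as an element of $L^2(0,T;\mcb{H}^{\gamma/2})$.

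The main obstacle will be the replacement lemma when $m \geq 3$ and $|x-y|$ is large. In that regime, the $2(m-1)$ sites entering a single product inside $c^{(m)}_{x,y}(\eta)$ split into two distant windows (one near $x$, one near $y$), and a single local averaging cannot handle both simultaneously. The route I would take, following \cite{CDG}, is to perform a one-block estimate separately on each window, reducing the respective products to powers $\rho_\ell^{m-1}$ of local densities, and then to apply a two-block estimate to decouple the two sides so that they can be replaced independently by the macroscopic density. Here the additional simple-exclusion term $\mathbbm{1}_{\{m\geq 3\}}\mathbbm{1}_{\{|x-y|=1\}}$ added in \eqref{defrates} is essential, as it guarantees enough coercivity of the Dirichlet form to prevent blocked configurations and to run the two-block estimate uniformly in $x,y$. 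Once this step is secured, the three ingredients above combine to yield $F(t,\rho,G,g)=0$, and uniqueness closes the argument.
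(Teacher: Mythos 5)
Your overall skeleton matches the paper: tightness of $(\mathbb{Q}_n)$, identification of limit points via Dynkin's formula and the entropy method, an energy estimate for condition (2), and uniqueness from Appendix C of \cite{CDG}. But there is a genuine gap in how you propose to extract the nonlinear term $\rho^m$, and it would cause the argument to fail.

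You write that each $(m-1)$-site window in $c^{(m,dif)}_{x,y} + c^{(m,dif)}_{y,x}$ should be replaced by $\rho^{m-1}$ by local averaging, with ``the combination with the exchange factor'' then producing $\rho^m$. The combinatorics do not work out that way: pairing $[\eta(y) - \eta(x)]$ with one $(m-1)$-window near $x$ and one near $y$ would, after replacement, give something like $\rho^{m-1}(u)\,\rho^{m-1}(v)\,[\rho(v) - \rho(u)]$, which is not the discrete gradient of $\rho^m$ and does not close. The paper avoids this entirely by an algebraic identity \eqref{algman}: for all $x,y$,
\begin{equation*}
[c^{(m,dif)}_{x,y}(\eta) + c^{(m,dif)}_{y,x}(\eta)]\,[\eta(y) - \eta(x)] = B_m(\eta,y) - B_m(\eta,x) + \text{(cross terms)},
\end{equation*}
where $B_m(\eta,z) = \prod_{i=0}^{m-1}\eta(z-i) + \prod_{i=0}^{m-1}\eta(z+i)$ is a sum of products of $m$ consecutive occupation variables at a \emph{single} location, and the cross terms are differences $C_m(\eta,y,x) - C_m(\eta,y+1,x+1)$, i.e.\ they have a discrete-gradient structure. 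After summing by parts in the kernel, $B_m(\eta,y) - B_m(\eta,x)$ produces $\mcb{K}_n G(\tfrac{x}{n})B_m(\eta,x)$, and only this term --- a single cluster of $m$ consecutive sites --- is fed into the Replacement Lemma \ref{globrep}, which replaces $\prod_{i=0}^{m-1}\eta(x+i)$ by $\prod_{i=0}^{m-1}\overrightarrow{\eta}^{\,\varepsilon n}(x+i\varepsilon n)$ and hence, in the limit, by $\rho^m$.

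This also means the ``main obstacle'' you identify --- the two distant windows near $x$ and $y$ when $|x-y|$ is large, which you plan to attack with a two-block estimate --- is not treated probabilistically at all in the paper. Because of the gradient structure, the cross terms $C_m(\eta,y,x) - C_m(\eta,y+1,x+1)$, paired against the kernel and summed in $y$, are bounded deterministically (Proposition \ref{convext}, which controls discrete second differences of $G$ weighted by $p$) and vanish as $n \to \infty$ with no entropy input. A replacement lemma for those cross terms would be both harder and unnecessary. Your intuition about the role of the added $\mathbbm{1}_{\{m\geq3\}}\mathbbm{1}_{\{|x-y|=1\}}$ term is closer to the mark, though the precise reason is that Lemma \ref{globrep} relies on nearest-neighbor exchanges being always available (so they must have nonzero rate regardless of the surrounding occupation), not on ``coercivity of the Dirichlet form uniformly in $x,y$.'' Absent the algebraic identity, the characterization step as you have described it would not produce the weak formulation $F(t,\rho,G,g)=0$ with the $\rho^m$ nonlinearity.
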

\noindent
\textbf{Outline of the proof:} 
For all the models we follow the entropy method introduced in \cite{GPV}. 
To make the presentation as simple as possible we hide many technical results and we just focus on the most important steps of the proof. We refer the reader to \cite{BGJ2, CDG, CGJ2} (and references there in) for more details. 
In Section \ref{sec:heuristics} we give an heuristic argument, highlighting the main steps of the proof so that we can derive the integral notions of the weak solutions of our hydrodynamic equations. 
In Section \ref{sec:tight}, we prove that the sequence $(\mathbb{Q}_{n})_{n \geq 1}$ is tight with respect to the Skorokhod topology of $\mcb {D}([0,T],  \mcb{M}^{+})$.
Then, as a consequence of 
Prohorov's Theorem (see Theorem 6.1 in \cite{billingsley}), we are able to conclude that this sequence is relatively compact. 
From this we know that  the sequence  $(\mathbb{Q}_{n})_{n \geq 1}$ has a convergent subsequence, i.e., there exists a subsequence  $(\mathbb{Q}_{n_j})_{j \geq 1}$ and a measure $\mathbb{Q}$ such that  the sequence $(\mathbb{Q}_{n_j})_{j \geq 1}$ weakly converges to $\mathbb{Q}$. In Subsection \ref{sec:charac} (resp. Subsection \ref{sec:EE}) we prove that any such limit point $\mathbb{Q}$ is concentrated on trajectories of measures satisfying the first (resp. the second) condition of weak solutions of \eqref{fractionalPME}. Combining those results with the uniqueness of weak solutions of \eqref{fractionalPME}, we can conclude that the aforementioned limit point $\mathbb{Q}$ is actually unique, leading to the conclusion of Theorem \ref{hydlim}. Finally in Subsection \ref{sec:replem} we prove a lemma which is required in Subsections \ref{sec:charac} and \ref{sec:EE}.

\section{Heuristic arguments for the hydrodynamic equations}
\label{sec:heuristics}

In this section we present an heuristic reasoning which leads us to the integral equation \eqref{eq:int_eq} in Definition \ref{eq:dif}. We begin by assuming that the sequence $(\mathbb{Q}_{n})_{n \geq 1}$ is tight  (this fact will be proved in the next section) and we denote by $\mathbb{Q}$ a limit point. 
A simple computation based on the fact that our variables are bounded, allows showing that the limit  measure $\mathbb{Q}$ is concentrated on trajectories of measures $\pi_t(du)$ that are  absolutely continuous with respect to the Lebesgue measure, i.e. $\pi_t(du):=\rho(t,u)du$. Now we need to characterize $\rho(t,u)$ as a weak solution to the corresponding fractional equation.  According to Dynkin's formula (see Lemma $A.1.5.1$ of \cite{kipnis1998scaling}), 
\begin{equation}\label{dynkin}
	\mcb M_{t}^{n}(G) :=\langle  \pi^{n}_{t}, G_t  \rangle - \langle \pi^{n}_{0}, G_0 \rangle - \int_{0}^{t} \partial_s \langle \pi^{n}_{s},G_s \rangle\,ds - \int_{0}^{t}  n^{\gamma} \mcb L\langle \pi^{n}_{s},G_s \rangle\,ds
\end{equation}
is a martingale with respect to $\mcb {F}^n_{t} := \left\{ \sigma(\eta_s^n): s\leq t \right\}$, for every $n \geq 1$, $t \in [0,T]$ and $G \in \mcb{S}$. Since the sequence $(\mathbb Q_n)_{n\geq 1}$ is tight let $n_j$ be a subsequence such that $(\mathbb Q_{n_j})_{j\geq 1}$ weakly converges to $\mathbb Q$, which is supported on trajectories of the form $\pi_t(du)=\rho(t,u)du$. To make notation simple we assume that $n_j=n$. From this it follows that the first three terms on the right-hand side of  \eqref{dynkin} converge, as $n \rightarrow \infty$, in $L^1 (\mathbb{P}_{\mu_n} )$ to
\begin{align*} 
	\int_{\mathbb{R}} \rho_t(u) G_t (u) \, du  - \int_{\mathbb{R}} \rho_0(u)G_0(u) \, du   -  \int_0^t   \int_{\mathbb{R}} \rho_s(u)  \partial_s G_s(u) \, du \,  ds.
\end{align*}
From Definition \ref{associated profile}, we get that for any $\delta>0$,  $$\mathbb Q\Big(\Big|\int_{\mathbb{R}} [\rho_0(u) - g(u)] G_0(u) \, du \,\Big|>\delta\Big)=0.$$  Hence last display converges in $L^1 (\mathbb{P}_{\mu_n} )$ to
\begin{align*}
	\int_{\mathbb{R}} \rho_t(u) G_t(u) \, du  - \int_{\mathbb{R}} g(u) G_0(u) \, du   -  \int_0^t   \int_{\mathbb{R}} \rho_s(u)  \partial_s G_s(u) \, du \;   ds,
\end{align*}   
as $n \rightarrow \infty$. Now we focus on last term of \eqref{dynkin}, which is known in the literature as the \textit{integral term}; it describes the action of the infinitesimal generator in the empirical measure associated to the conserved quantity: the density of particles. 

The integral term  will lead us to the fractional heat equation (resp. fractional porous medium equation) for $m=1$ (resp. $2\leq m\in\mathbb N$). Indeed, by fixing $x_0 \in \mathbb{Z}$ and making $f=\eta(x_0)$ in \eqref{geninf}, we have
\begin{align}  \label{lnetaz}
	\mcb L \big( \eta(x_0) \big) = \frac{1}{2} \sum_{y } p(y-x_0) c^{(m)}_{x_0,y}(\eta) [\eta(y)-\eta(x_0)].
\end{align}
Above we used the facts that $[\eta^{y,z}(x_0)-\eta(x_0)]=0$ when $y,z  \in \mathbb{Z} - \{x_0 \}$ and that $p(y-x_0) c^{n}_{x_0,y}(\eta) = p(x_0-y) c^{n}_{y,x_0}(\eta)$. From the linearity of $\mcb {L}$ and \eqref{lnetaz}, we get
\begin{align} \label{gendynk}
	n^{\gamma} \mcb{L} ( \langle \pi_s^n, G_s \rangle )  
	= \frac{n^{\gamma}}{2n} \sum_{x,y}  G_s(\tfrac{x}{n}) p(y-x)  c^{(m)}_{x,y}(\eta_s^n)  [\eta_s^n(y)-\eta_s^n(x)]. 
\end{align}
Next we analyse \eqref{gendynk} for particular values of $m$. 
\subsection{Heuristics for $m=1$}
For $m=1$, we get from \eqref{defrates} that $c^{(m)}_{x,y}(\eta_s^n)  [\eta_s^n(y)-\eta_s^n(x)]=2[\eta_s^n(y)-\eta_s^n(x)]$, therefore the right-hand side of \eqref{gendynk} can be rewritten as
\begin{align*}
	&\frac{n^{\gamma}}{n} \sum_{x,y}  G_s(\tfrac{x}{n}) p(y-x)    \eta_s^n(y) - \frac{n^{\gamma}}{n} \sum_{x,y}  G_s(\tfrac{x}{n}) p(y-x)    \eta_s^n(x) \\
	=&  \frac{1}{n} \sum_{x,y} n^{\gamma}[G_s(\tfrac{y}{n}) -  G_s(\tfrac{x}{n})] p(y-x)  \eta_s^n(x).
\end{align*}
In last equality we used the symmetry of $p(\cdot)$. Therefore,  
\begin{align} \label{terprinc1} 
	\int_{0}^{t}  n^{\gamma} \mcb L\langle \pi^{n}_{s},G_s\rangle\,ds = \int_{0}^{t} \frac{1}{n}\sum_{x }n^{\gamma}\mcb{K}_{n}G_s(\tfrac{x}{n}) \eta_s^n(x) \, ds, 
\end{align}
where $\mcb{K}_{n}$ is defined on functions $G \in \mcb{S}$ by
\begin{equation}\label{defKn}
	\mcb{K}_{n}G_s(\tfrac{x}{n}) :=\sum_{y}\left[G_s(\tfrac{y}{n})-G_s(\tfrac{x}{n})\right]p(y-x).
\end{equation}
The term on the right-hand side of \eqref{terprinc1} can be treated with next result, which motivates the choice $n^{\gamma}$ for the choice of the  time scale. We refer the reader to Proposition A.1 of \cite{CGJ2} for its proof.
\begin{prop} \label{convdisc}
	For every $\gamma \in (0,2)$ and $G \in \mcb S$, it holds
	\begin{align*}
		\lim_{n \rightarrow \infty} \frac{1}{n}   \sum_{x }  \sup_{s \in [0,T]}\big|n^{\gamma} \mcb{K}_n G_s \left(\tfrac{x}{n} \right)  -[-(- \Delta)^{\gamma/2}   G_s]  \left(\tfrac{x}{n} \right) \big| =0.
	\end{align*}
\end{prop}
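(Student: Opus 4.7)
Since $G\in\mcb S=P([0,T],C_c^\infty(\mathbb R))$ has the form $G_s(u)=\sum_{j=0}^k s^jG_j(u)$ with $G_j\in C_c^\infty(\mathbb R)$, linearity of $\mcb K_n$ and of $-(-\Delta)^{\gamma/2}$ together with $|s|\leq T$ reduce the claim to showing, for each fixed $G\in C_c^\infty(\mathbb R)$, that $\tfrac{1}{n}\sum_x|E_n(x/n)|\to 0$, where $E_n(u):=n^\gamma\mcb K_n G(u)-[-(-\Delta)^{\gamma/2}G](u)$. After the substitution $z=y-x$, symmetrizing in $z$ via $p(-z)=p(z)$ and symmetrizing the principal value in the fractional Laplacian, the two quantities take the form
\begin{align*}
n^\gamma\mcb K_n G(u)&=c_\gamma\sum_{z\geq 1}\frac{G(u+\tfrac{z}{n})+G(u-\tfrac{z}{n})-2G(u)}{n\,(z/n)^{1+\gamma}},\\
[-(-\Delta)^{\gamma/2}G](u)&=c_\gamma\int_0^\infty\frac{G(u+w)+G(u-w)-2G(u)}{w^{1+\gamma}}\,dw,
\end{align*}
exhibiting the first expression as a Riemann sum with mesh $1/n$ for the absolutely convergent integral on the right. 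The symmetrization is crucial: the numerator vanishes like $w^2$, taming the kernel's singularity at $w=0$.

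Next I would establish uniform pointwise convergence on compact sets. Fix $M_0>0$ and $|u|\leq M_0$, and split sum and integral at a cut-off $\delta>0$. The near parts (namely $w\leq\delta$ and $1\leq z\leq n\delta$) are each bounded, via $|G(u+w)+G(u-w)-2G(u)|\leq\|G''\|_\infty w^2$, by $C\|G''\|_\infty\delta^{2-\gamma}$ uniformly in $u$ and $n$. On $[\delta,\infty)$, the map $w\mapsto[G(u+w)+G(u-w)-2G(u)]/w^{1+\gamma}$ is $C^1$ with $w$-derivative bounded in $L^1([\delta,\infty))$ uniformly for $u\in[-M_0,M_0]$, so a standard Riemann-sum estimate yields an error of order $C_\delta/n$. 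Sending first $n\to\infty$ and then $\delta\to 0$ gives $\sup_{|u|\leq M_0}|E_n(u)|\to 0$.

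To upgrade this pointwise control to $\tfrac{1}{n}\sum_x$-convergence, I would extract a summable far-field bound. Pick $M>0$ with $\mathrm{supp}\,G\subset[-M,M]$. For $|u|\geq 2M$ one has $G(u)=0$ and $G(u\pm w)\neq 0$ only when $|w|\in[|u|-M,|u|+M]\subset[|u|/2,2|u|]$. Bounding $|G|$ by $\|G\|_\infty$ and integrating/summing the kernel $|w|^{-1-\gamma}$ over this range yields $|E_n(u)|\leq C_M/|u|^{1+\gamma}$ with $C_M$ independent of $n$. Consequently
\[
\tfrac{1}{n}\sum_x|E_n(x/n)|\leq\bigl(4M+\tfrac{1}{n}\bigr)\sup_{|u|\leq 2M}|E_n(u)|+\tfrac{C_M}{n}\sum_{|x|>2Mn}\bigl(\tfrac{n}{|x|}\bigr)^{1+\gamma},
\]
and the last sum is comparable, uniformly in $n$, to the convergent integral $\int_{|u|>2M}|u|^{-1-\gamma}\,du=O(M^{-\gamma})$. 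Choosing $M$ large first (to make the tail arbitrarily small) and then $n$ large (to kill the compact-set error by the previous step) closes the argument.

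The principal difficulty is coordinating these two estimates: the proof needs a quantitative $u$-uniform approximation rate on compact sets (forcing the symmetrization that handles the principal-value singularity), while simultaneously relying on a pointwise decay $|E_n(u)|\lesssim|u|^{-1-\gamma}$ whose constant is independent of $n$, so that the factor $\tfrac{1}{n}\sum_x$ stays summable uniformly in the scaling limit.
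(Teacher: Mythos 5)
The paper does not contain its own proof of Proposition~\ref{convdisc}; it delegates to Proposition A.1 of \cite{CGJ2}. Your argument is correct and self-contained, and it follows the natural route that any proof of this estimate must take: pair $z$ with $-z$ (using $p(z)=p(-z)$) and symmetrize the principal value, so that both sides involve the second difference $G(u+w)+G(u-w)-2G(u)$, whose quadratic vanishing at $w=0$ tames the kernel; then view $n^\gamma\mcb K_nG$ as a mesh-$1/n$ Riemann sum of the integral, split at a cutoff $\delta$, bound the near field by $\|G''\|_\infty\delta^{2-\gamma}$ uniformly in $n$ and $u$, control the far field by a Riemann-sum error of order $C_\delta/n$, and handle the $\tfrac{1}{n}\sum_x$ by the compact support of $G$, which yields an $n$-independent tail decay $|E_n(u)|\lesssim|u|^{-1-\gamma}$.

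One small presentational point: you first fix $M$ with $\mathrm{supp}\,G\subset[-M,M]$, then later ``choose $M$ large.'' What you are actually doing is choosing an auxiliary splitting radius $R\geq M$ with $M$ fixed; the far-field constant then depends only on $M$, $\|G\|_\infty$ (and on $\gamma, c_\gamma$), not on $R$, so the tail is $O(R^{-\gamma})$ and can indeed be made small by enlarging $R$ before taking $n\to\infty$. It would help to separate these two radii notationally, but the underlying estimate is correct. As there is no proof in the paper itself to compare against, I cannot certify it is identical to the cited one, but the decomposition you use (symmetrization plus a near/far split plus a compact-support tail bound) is the standard and essentially forced approach for this statement.
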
 
Note that \eqref{terprinc1} can be written as $\int_0^t \langle \pi_s^n, n^{\gamma} \mcb{K}_n G_s\rangle$ and so it is written as a function of the empirical measure. 
Thanks to Proposition \ref{convdisc}, we conclude that for $m=1$, the integral term converges, as $n \rightarrow \infty$, in $L^1 (\mathbb{P}_{\mu_n} )$ to
\begin{align*} 
	\int_0^t \int_{\mathbb{R}} \rho_s(u)  [-(- \Delta)^{\gamma/2} G_s](u) \, du \, ds,
\end{align*} 
and when we collect all terms we get the integral equation in Definition \ref{eq:dif}.
\subsection{Heuristics for $m \geq 2$}
For $m \geq 2$, we will perform some algebraic manipulations in order to rewrite the integral term as the sum of two terms. The first one will be the \textit{principal term}, which will converge as $n \rightarrow \infty$ and in $L^1 (\mathbb{P}_{\mu_n} )$ to
\begin{align} \label{weakform}
	\int_0^t \int_{\mathbb{R}} \rho^m_s(u)  [-(- \Delta)^{\gamma/2} G_s](u) \, du \, ds,
\end{align}   
which is the final term to be obtained in the integral equation in Definition \ref{eq:dif}. The second one is the \textit{extra term}, that vanishes  as $n \rightarrow \infty$ and in $L^1 (\mathbb{P}_{\mu_n} )$.
\subsubsection{Heuristics for $m=2$}
We begin by treating the particular case $m=2$, in the same way as it was done in \cite{CDG}. In this case, Remark \ref{ratpor} leads to
\begin{align*}
	&[c^{(m,dif)}_{x,y}(\eta) + c^{(m,dif)}_{y,x}(\eta)  ] [\eta(y)-\eta(x)] \\
	=& [ \eta(x-1) + \eta(y+1) + \eta(y-1) + \eta(x+1) ][\eta(y) - \eta(x) ] \\
	=& [\eta(y) \eta(y-1) + \eta(y) \eta(y+1)] - [\eta(x) \eta(x-1) + \eta(x) \eta(x+1)] \\
	+&[ \eta(y) \eta(x-1)- \eta(y+1) \eta(x) ] - [ \eta(x) \eta(y-1)- \eta(x+1) \eta(y) ].  
\end{align*}
In the third line of last display, we simply collected the products of $\eta$'s that  either depend only on $x$ or depend only on $y$. And in last line, we collected all the remaining products (which depend both  on $x$ and $y$). The principal term (resp. extra term) will be produced by the terms in  the third line (resp. the last line). The extra term converges vanishes in $L^1 (\mathbb{P}_{\mu_n} )$  as $n \rightarrow \infty$, due to Proposition \ref{convext}. Indeed, combining last display with an exchange of variables, the right-hand side of \eqref{gendynk} can be rewritten as
\begin{align*}
	& \frac{1}{2n}\sum_{x,y }n^{\gamma} [ G_s(\tfrac{y}{n})  p(x-y) -   G_s(\tfrac{x}{n})  p(y-x)] [ \eta_s^n(x) \eta_s^n(x-1) + \eta_s^n(x) \eta_s^n(x+1) ]   \\
	+ & \frac{1}{2n}\sum_{x,y } n^{\gamma} [ G_s(\tfrac{y}{n})  p(x-y) -   G_s(\tfrac{x}{n})  p(y-x)] [ \eta_s^n(x) \eta_s^n(y-1) - \eta_s^n(x+1) \eta_s^n(y) ] .
\end{align*}
From the symmetry of $p(\cdot)$, last display can be rewritten as 
\begin{align*}
	&  \frac{1}{2n}\sum_{x }n^{\gamma}\mcb{K}_{n}G_s(\tfrac{x}{n}) [ \eta_s^n(x) \eta_s^n(x-1) + \eta_s^n(x) \eta_s^n(x+1) ]   \\
	+ & \frac{1}{2n}\sum_{x,y }n^{\gamma} [ G_s(\tfrac{y}{n}) - G_s(\tfrac{x}{n})] p(y-x) [ \eta_s^n(x) \eta_s^n(y-1) - \eta_s^n(x+1) \eta_s^n(y) ] . 
\end{align*}
Above, the term in first line (resp. second line) is the principal term (resp. extra term). Contrarily to what we have seen in \eqref{terprinc1}, now the principal term cannot be straightforwardly written in terms of the empirical measure (and this is the case for all $m \geq 2$ and for this reason extra arguments are needed in order to write this term as a function of the empirical measure. 
\subsubsection{Heuristics for $m=3$}
For $m=3$, we have an extra contribution of the SSEP dynamics that we added in order to have always the possibility of performing jumps of length one (in the previous case this was granted by the porous medium dynamics). In this case, from Remark \ref{ratpor},  we have 
\begin{align*}
	&[c^{(m,dif)}_{x,y}(\eta) + c^{(m,dif)}_{y,x}(\eta)  ] [\eta(y)-\eta(x)]\\
	=& [\eta(y) \eta(y-1) \eta(y-2) + \eta(y) \eta(y+1) \eta(y+2)] \\
	-& [\eta(x) \eta(x-1) \eta(x-2) + \eta(x) \eta(x+1) \eta(x+2)] \\
	+& \big[ \eta(y) \eta(x-1) \eta(x-2) + \eta(y) \eta(y+1) \eta(x-1)\\
	- &  \eta(y+1) \eta(x) \eta(x-1) - \eta(y+1) \eta(y+2) \eta(x) \big] \\
	- & \big[ \eta(x) \eta(y-1) \eta(y-2) + \eta(x) \eta(x+1) \eta(y-1) \\
	- & \eta(x+1) \eta(y) \eta(y-1) - \eta(x+1) \eta(x+2) \eta(y) \big].  
\end{align*}
In the second and third lines of last display, we simply collected the products of $\eta$'s which either depend only on $r$ or depend only on $y$. On the other hand, in the last four lines, we collected all the remaining products (which depend of both $x$ and $y$). The principal term (resp. extra term) will be produced by the terms in the second and third lines (resp. last four lines). The extra term converges in $L^1 (\mathbb{P}_{\mu_n} )$ to zero as $n \rightarrow \infty$, due to Proposition \ref{convext} below. Indeed, combining last display with an exchange of variables, the right-hand side of \eqref{gendynk} can be rewritten as
\begin{align*}
	&\int_{0}^{t} \frac{n^{\gamma}}{2n}\sum_{x,y } [ G_s(\tfrac{y}{n})  p(x-y) -   G_s(\tfrac{x}{n})  p(y-x)] \times \\
	& \times[ \eta_s^n(x) \eta_s^n(x-1) \eta_s^n(x-2) + \eta_s^n(x) \eta_s^n(x+1) \eta_s^n(x+2) ] \,  ds \\
	+ & \int_{0}^{t} \frac{n^{\gamma}}{2n}\sum_{x,y } [ G_s(\tfrac{y}{n})  p(x-y) -   G_s(\tfrac{x}{n})  p(y-x)] \times \\
	\times & \big( \eta_s^n(x)  \eta_s^n(y-1) [\eta_s^n(y-2) +  \eta_s^n(x+1)  ] - \eta_s^n(x+1)  \eta_s^n(y) [\eta_s^n(y-1) +  \eta_s^n(x+2)  ] \big) \,  ds \\
	+&  \int_{0}^{t} \frac{n^{\gamma}}{2n} \sum_{x } \big(  G_s(\tfrac{x+1}{n})  p(-1) + G_s(\tfrac{x-1}{n})  p(1) -   G_s(\tfrac{x}{n}) [ p(1) +   p(-1)] \big)\eta_s^n(x) \, ds.
\end{align*}
From the symmetry of $p(\cdot)$, last display can be rewritten as 
\begin{align*}
	& \int_{0}^{t} \frac{1}{2n} \sum_{x }n^{\gamma}\mcb{K}_{n}G_s(\tfrac{x}{n}) [ \eta_s^n(x) \eta_s^n(x-1) \eta_s^n(x-2) + \eta_s^n(x) \eta_s^n(x+1) \eta_s^n(x+2) ] \, ds   \\
	+ & \int_{0}^{t} \Big\{  \frac{1}{2n}\sum_{x,y }n^{\gamma} [ G_s(\tfrac{y}{n}) - G_s(\tfrac{x}{n})] p(y-x) \eta_s^n(x)  \eta_s^n(y-1) [\eta_s^n(y-2) +  \eta_s^n(x+1)  ] \\
	- & \frac{1}{2n}\sum_{x,y }n^{\gamma} [ G_s(\tfrac{y}{n}) - G_s(\tfrac{x}{n})]  p(y-x) \eta_s^n(x+1)  \eta_s^n(y) [\eta_s^n(y-1) +  \eta_s^n(x+2)  ]  \Big\} \, ds  \\
	+ & \int_{0}^{t}  \frac{p(1)}{2n} \sum_{x }n^{\gamma} [ G_s(\tfrac{x+1}{n}) + G_s(\tfrac{x-1}{n}) - 2 G_s(\tfrac{x}{n})  ] \eta_s^n(x) \, ds.
\end{align*}
The term in the fourth line of last display comes from the contribution of the SSEP and it vanishes as $n \rightarrow \infty$, since it is not scaled on the diffusive scale $n^2$ but on $n^\gamma$.
\subsubsection{Heuristics for the general case $m \geq 2$}
For every $m \geq 2$, from Remark \ref{ratpor}, it holds
\begin{equation} \label{algman}
	\begin{split}
		&[c^{(m,dif)}_{x,y}(\eta) + c^{(m,dif)}_{y,x}(\eta)] [\eta(y) - \eta(x)] =  B_{m}( \eta, y ) - B_{m}( \eta, x ) \\
		+& [ C_{m}( \eta, y , x  ) - C_{m}( \eta,y+1, x+1  )  ] - [ C_{m}( \eta,x, y  ) - C_{m}( \eta,x+1, y+1  ) ], 
	\end{split}
\end{equation}
where $B_{m}: \Omega \times \mathbb{Z} \rightarrow [0,2]$ and $C_{m} : \Omega \times \mathbb{Z} \times \mathbb{Z} \rightarrow [0,m-1]$ are given by
\begin{align}
	& B_{m}( \eta, z ):= \prod_{i=0}^{m-1}   \eta(z-i ) + \prod_{i=0}^{m-1}   \eta(z+i ); \label{defBm} \\
	& C_{m}( \eta, z, w  ):= \sum_{k=1}^{m-1} \prod_{j=0}^{k-1} \eta( z + j   ) \times \prod_{i=1}^{m-k} \eta( w - i  ). \label{defCm}
\end{align}
Combining \eqref{algman} with an exchange of variables, the right-hand side of \eqref{gendynk} can be rewritten as
\begin{align*}
	& \frac{n^{\gamma}}{2n}\sum_{x,y }  [ G_s(\tfrac{y}{n})  p(x-y) -   G_s(\tfrac{x}{n})  p(y-x)] B_{m}( \eta_s^n, x ) \\
	+ &  \frac{n^{\gamma}}{2n}\sum_{x,y }  [ G_s(\tfrac{y}{n})  p(x-y) -   G_s(\tfrac{x}{n})  p(y-x)] [ C_{m}( \eta_s^n,x, y  ) - C_{m}( \eta_s^n,x+1, y+1  )   ]\\
	+&   \frac{n^{\gamma}}{2n} \sum_{x }  \big( [ G_s(\tfrac{x+1}{n})  p(-1) -   G_s(\tfrac{x}{n})  p(1)] + [ G_s(\tfrac{x-1}{n})  p(1) -   G_s(\tfrac{x}{n})  p(-1)] \big)\eta_s^n(x) .
\end{align*}
For every $G \in \mcb S$, define $\nabla G^n_s$ by $\nabla G^n_s(\tfrac{x}{n})=n[ G_s(\tfrac{x}{n})-G_s(\tfrac{x-1}{n})]$.  Then, from the symmetry of $p(\cdot)$, the  integral term can be rewritten as 
\begin{align}
	& \int_{0}^{t} \frac{1}{2n} \sum_{x }n^{\gamma}\mcb{K}_{n}G_s(\tfrac{x}{n}) B_m( \eta_s^n, x ) \, ds \label{terprinc} \\
	+ & \int_{0}^{t} \frac{1}{2n^2}\sum_{x,y } n^{\gamma} [ \nabla G^n_s(\tfrac{y}{n}) - \nabla G^n_s(\tfrac{x}{n}) ] p(y-x)  C_{m}( \eta_s^n,x, y  ) \, ds \label{terext1} \\
	+ & \mathbbm{1}_{ \{ m \geq 3 \} }  \int_{0}^{t} \frac{p(1)}{2n} \sum_{x }n^{\gamma} [ G_s(\tfrac{x+1}{n}) + G_s(\tfrac{x-1}{n}) - 2 G_s(\tfrac{x}{n})  ] \eta_s^n(x) \, ds. \label{terext2}
\end{align}
In order to obtain \eqref{terext1}, we performed the change of variables $(z,w)=(x+1,y+1)$. Thus, the principal term (resp. the extra term) is given by \eqref{terprinc} (resp. the sum of \eqref{terext1} and \eqref{terext2}). In particular, the extra term can be rewritten as $\int_0^t \mcb{R}_{n}^G(s)ds$, where
\begin{equation}\label{defR}
	\begin{split}
		\mcb{R}_{n}^G(s) :=& \frac{1}{2n^2}\sum_{x,y } n^{\gamma} [ \nabla G^n_s(\tfrac{y}{n}) - \nabla G^n_s(\tfrac{x}{n}) ] p(y-x) C_{m}( \eta_s^n,x, y  )  \\
		+ & \mathbbm{1}_{ \{ m \geq 3 \} }  \frac{p(1)}{2n} \sum_{x }n^{\gamma} [ G_s(\tfrac{x+1}{n}) + G_s(\tfrac{x-1}{n}) - 2 G_s(\tfrac{x}{n})  ] \eta_s^n(x).
	\end{split}  
\end{equation}
Combining \eqref{defCm} with the fact that the variables are bounded (i.e. $|\eta_s^n(\cdot)| \leq 1$), we get $| C_{m}( \cdot , \cdot , \cdot  ) | \leq m - 1$. Thus, we have that $\sup_{s \in [0,T]} |\mcb{R}_{n}^G(s)| \leq Y_1^G + Y_2^G$ for every $G \in \mcb S$, where $Y_1^G$ and $Y_2^G$ are given by
\begin{align*}
	Y_1^{G,n}:=& \frac{m-1}{n^2}   \sum_{x,y }  \sup_{s \in [0,T]} n^{\gamma} | \nabla G^n_s(\tfrac{y}{n}) - \nabla G^n_s(\tfrac{x}{n}) | p(y-x);  \\
	Y_2^{G,n}:=& \frac{1}{n} \sum_{x } \sup_{s \in [0,T]}  n^{\gamma} \big| G_s(\tfrac{x+1}{n}) + G_s(\tfrac{x-1}{n}) - 2 G_s(\tfrac{x}{n})\big|.
\end{align*}
Next we claim that  for all $G \in \mcb S$,  it holds $\lim_{n \rightarrow \infty} Y_1^{G,n} + Y_2^{G,n} = 0.$
In order to treat $ Y_1^{G,n}$ we apply Proposition \ref{convext} below. We refer the interested reader in its proof to Proposition 3.1 of \cite{CDG}.
\begin{prop} \label{convext}
	For  $\gamma \in (0,2)$, let
	$\delta_{\gamma}=
	\frac 12\textbf{1}_{\gamma=1}+\textbf{1}_{\gamma \in (1,2)}$.
	Then {for any $G \in \mcb S$} 
	\begin{align*}
		& Y_1^{G,n} \lesssim \max \Big\{ n^{\gamma-2} , n^{-1} , n^{\gamma-1-\delta_{\gamma}}\Big\}. 
	\end{align*}
	In particular, since $ \delta_{\gamma} > \gamma - 1$ and $\gamma < 2$, last display vanishes as $n \rightarrow \infty$.
\end{prop}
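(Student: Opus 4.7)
The plan is to bound $Y_1^{G,n}$ by elementary estimates, splitting the sum over $z:=y-x$ into \emph{short} and \emph{long} jumps and using a Taylor estimate on $\nabla G^n$ for the first and the trivial $L^\infty$ bound for the second. Since $G \in \mcb S$ is a finite polynomial in $s$ with coefficients $G_0,\dots,G_k \in C_c^\infty(\mathbb R)$, one can estimate the $s$-supremum coefficient by coefficient and reduce to a single fixed $G \in C_c^\infty(\mathbb R)$; let $[-R,R]$ contain its support.

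After the change of variable $z=y-x$, using $p(z) = c_\gamma |z|^{-1-\gamma}$ for $z \neq 0$, the quantity becomes
\begin{equation*}
Y_1^{G,n} \;\lesssim\; \frac{n^{\gamma}}{n^2}\sum_{z\ne 0}|z|^{-1-\gamma}\sum_{x\in\mathbb Z}\bigl|\nabla G^n(\tfrac{x+z}{n})-\nabla G^n(\tfrac{x}{n})\bigr|.
\end{equation*}
Two complementary bounds control the $x$-difference. First, since $\partial_u \nabla G^n(u) = n[G'(u)-G'(u-1/n)] = G''(u)+O(1/n)$ uniformly in $u$, the mean value theorem yields the Lipschitz estimate $|\nabla G^n(\tfrac{x+z}{n})-\nabla G^n(\tfrac{x}{n})| \lesssim |z|/n$. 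Second, $|\nabla G^n(\cdot)|\le\|G'\|_\infty$ (again by the mean value theorem applied to $G$), giving a difference of at most $2\|G'\|_\infty$. The compact support of $G$ moreover confines the set of $x$ yielding a nonzero contribution to an $O(n)$ window (uniformly in $z$, since each translate of the support contains $O(n)$ integers). Combining, $\sum_x|\nabla G^n(\tfrac{x+z}{n})-\nabla G^n(\tfrac{x}{n})| \lesssim \min(|z|,n)$, using the Lipschitz bound for $|z| \le n$ and the $L^\infty$ bound for $|z|>n$.

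Substituting this, the claim reduces to standard power-sum estimates:
\begin{equation*}
Y_1^{G,n}\;\lesssim\;\frac{n^\gamma}{n^2}\Bigl[\sum_{0<|z|\le n}|z|^{-\gamma} \;+\; n\sum_{|z|>n}|z|^{-1-\gamma}\Bigr].
\end{equation*}
For $\gamma\in(0,1)$ the first sum is $\lesssim n^{1-\gamma}$ and the second is $\lesssim n^{1-\gamma}$, so $Y_1^{G,n}\lesssim n^{-1}$. For $\gamma\in(1,2)$ the first sum is $O(1)$ and the second is $O(n^{1-\gamma})=o(1)$, giving $Y_1^{G,n}\lesssim n^{\gamma-2}$. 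For the borderline $\gamma=1$ the first sum is $\sim \log n$, yielding $Y_1^{G,n}\lesssim (\log n)/n \lesssim n^{-1/2}=n^{\gamma-1-\delta_\gamma}$. In all three regimes the resulting bound lies below $\max\{n^{\gamma-2},n^{-1},n^{\gamma-1-\delta_\gamma}\}$.

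The only nontrivial point is matching each regime of $\gamma$ to the right slot of the stated three-term maximum; in particular the fractional $\delta_\gamma=1/2$ at $\gamma=1$ is present precisely to absorb the logarithm arising from $\sum_{|z|\le n}|z|^{-1}$ into a clean polynomial decay rate. No cancellation between $x$ and $y$ is exploited beyond the symmetry of $p(\cdot)$ already used in deriving \eqref{defR}; the remaining argument is purely a size estimate pairing a smooth compactly supported test function against the stable-tailed kernel $p$.
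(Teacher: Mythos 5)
The paper does not present its own proof of Proposition \ref{convext}; it defers to Proposition 3.1 of \cite{CDG}. Judged on its own, your argument is correct and uses the natural approach one would expect in the cited reference: after reducing to a single $G\in C_c^\infty(\mathbb R)$ via the polynomial structure of $\mcb S$, you change variables to $z=y-x$, observe that the compact support restricts $x$ to an $O(n)$ window uniformly in $z$, and then bound the increment of $\nabla G^n$ by $\min(\|G''\|_\infty |z|/n,\, 2\|G'\|_\infty)$, i.e.\ a Lipschitz estimate for short jumps $|z|\le n$ and the $L^\infty$ bound for long jumps $|z|>n$. The resulting power/log sums give $n^{-1}$ for $\gamma\in(0,1)$, $(\log n)/n \lesssim n^{-1/2}$ for $\gamma=1$, and $n^{\gamma-2}$ for $\gamma\in(1,2)$, each dominated by the stated three-term maximum; in fact for $\gamma\in(0,1)$ your bound $n^{-1}$ is strictly sharper than the quoted $n^{\gamma-1}$. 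No gap.
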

In order to treat $Y_2^{G,n}$, we apply a second-order Taylor expansion of $G_s$ around $x/n$ to conclude that
\begin{align*}
	\lim_{n \rightarrow \infty} \frac{1}{n} \sum_{x } \sup_{s \in [0,T]} \big| n^{2} [ G_s(\tfrac{x+1}{n}) + G_s(\tfrac{x-1}{n}) - 2 G_s(\tfrac{x}{n})] - \Delta G_s (\tfrac{x}{n})   \big| =0.
\end{align*}
Combining this with the fact that $\gamma < 2$, we have that $\lim_{n \rightarrow \infty} Y_2^{G,n}=0$ for every $G \in \mcb S$ and this proves the claim. In particular, it holds 
\begin{equation} \label{boundL1RnG}
	\lim_{n \rightarrow \infty} \mathbb{E}_{\mu_n} \Big[ \sup_{t \in [0,T]} \Big| \int_{0}^{t}  \mcb {R}_{n}^G(s) \, ds \,   \Big|  \Big] =0.
\end{equation}
Finally, we treat the display in \eqref{terprinc}; from \eqref{defBm}, it can be rewritten as
\begin{align*}
	\int_{0}^{t} \frac{1}{2n} \sum_{x }n^{\gamma} \big[\mcb{K}_{n}G_s(\tfrac{x}{n}) + \mcb{K}_{n}G_s(\tfrac{x+m-1}{n}) \big] \prod_{i=0}^{m-1} \eta^n_{s}(x+i ) \, ds.
\end{align*}
Due to the products of $\eta_s^n$'s, it is not straightforward to close the integral term in terms of the empirical measure; in order to do so, we apply Lemma \ref{globrep} below. Before stating this lemma, we introduce a bit of notation. For every $\ell \geq 1$ in $\mathbb{Z}$ and $x \in \mathbb{Z}$, we denote the empirical average in a box of side $\ell$ \textit{at the right} of $x$ by
\begin{equation}  \label{medempright}
	\overrightarrow{\eta}^{\ell}(x):= \frac{1}{\ell} \sum_{i=1}^{\ell} \eta (x + i).
\end{equation}
\begin{lem} \textbf{(Replacement Lemma)} \label{globrep}
	Let $(\Phi_{n})_{n \geq 1} : [0,T] \times \mathbb{R}  \rightarrow \mathbb{R}$ be a sequence of functions satisfying 
	\begin{align} \label{boundrep}
		\frac{1}{n} \sum_{x} \sup_{s \in [0,T]} | \Phi_{n}(s, \tfrac{x}{n} ) | \leq M_1 \quad\textrm{and}\quad\| \Phi_{n} \|_{\infty}:= \sup_{(s,u) \in [0,T] \times \mathbb{R}} \hspace{-0.5 cm} | \Phi_n (s,u)| \leq M_2,
	\end{align}
	for all $n \geq 1$ and for some positive constants $M_1$, $M_2$. Under Assumption (A), for any $m \geq 2$ and every $t \in [0,T]$, it holds
	\begin{equation*}
		\varlimsup_{\varepsilon \rightarrow 0^{+}}\varlimsup_{n \rightarrow \infty} \mathbb{E}_{\mu_n} \Big[  \Big| \int_{0}^{t}\frac{ 1  }{n}\sum_{x}  \Phi_{n} (s, \tfrac{x}{n} ) \Big\{ \prod_{i=0}^{m-1} \eta^n_{s}(x+i ) - \prod_{i=0}^{m-1} \overrightarrow{\eta}_{s}^{\varepsilon n} \big(x+  i \varepsilon n) \Big \} \, ds \, \Big| \Big] =0.
	\end{equation*}
\end{lem}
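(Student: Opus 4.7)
My plan is to follow the Guo--Papanicolaou--Varadhan entropy method, reducing the product replacement to single-spin replacements via telescoping and then to a variational problem controlled by the Dirichlet form. First, I would linearise the difference of products using the identity
\begin{equation*}
\prod_{i=0}^{m-1}\eta(x+i) - \prod_{i=0}^{m-1}\overrightarrow{\eta}^{\varepsilon n}(x+i\varepsilon n) = \sum_{k=0}^{m-1}\Theta_{k,x}(\eta)\bigl[\eta(x+k)-\overrightarrow{\eta}^{\varepsilon n}(x+k\varepsilon n)\bigr],
\end{equation*}
where $\Theta_{k,x}(\eta) := \prod_{i<k}\eta(x+i)\cdot\prod_{i>k}\overrightarrow{\eta}^{\varepsilon n}(x+i\varepsilon n)$. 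The ordering is crucial: for $n$ large enough, the sites in the support of $\Theta_{k,x}$, namely $\{x,\ldots,x+k-1\}\cup\{x+(k+1)\varepsilon n+1,\ldots,x+m\varepsilon n\}$, are disjoint from both $\{x+k\}$ and the box $\{x+k\varepsilon n+1,\ldots,x+(k+1)\varepsilon n\}$, so that $\Theta_{k,x}$ is invariant under every exchange $\eta\leftrightarrow\eta^{x+k,\,x+k\varepsilon n+j}$ ($1\leq j\leq\varepsilon n$) appearing in the variational estimate below. Since $|\Theta_{k,x}|\leq 1$, it suffices to prove the claim for each $k\in\{0,\ldots,m-1\}$ separately, with integrand $V_{n,k}(s,\eta) := \tfrac{1}{n}\sum_x \Phi_n(s,\tfrac{x}{n})\Theta_{k,x}(\eta)\bigl[\eta(x+k)-\overrightarrow{\eta}^{\varepsilon n}(x+k\varepsilon n)\bigr]$.

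Second, I would apply the entropy inequality with Assumption~(A), the elementary bound $|X|\leq \log(e^X+e^{-X})$, and the Feynman--Kac formula to obtain, for any free parameter $B>0$,
\begin{equation*}
\mathbb{E}_{\mu_n}\Bigl[\Bigl|\int_0^t V_{n,k}(s,\eta_s^n)\,ds\Bigr|\Bigr] \leq \frac{C_b+\log 2}{B} + \int_0^t \sup_f\Bigl\{\Bigl|\int V_{n,k}(s,\eta)f(\eta)\,d\nu_b\Bigr| - \tfrac{n^{\gamma-1}}{B}\langle\sqrt f,-\mcb L\sqrt f\rangle_{\nu_b}\Bigr\}\,ds,
\end{equation*}
where $f$ ranges over densities with respect to $\nu_b$. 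This reduces the dynamical statement to a static variational estimate, which is the real content.

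Third, to control the variational functional I would write $\eta(x+k)-\overrightarrow{\eta}^{\varepsilon n}(x+k\varepsilon n) = (\varepsilon n)^{-1}\sum_{j=1}^{\varepsilon n}[\eta(x+k)-\eta(x+k\varepsilon n+j)]$, use the $\Theta_{k,x}$-invariance together with the $\nu_b$-exchange invariance to convert each spin difference into a gradient $\int \Theta_{k,x}\,\eta(x+k)[f-f^{x+k,\,x+k\varepsilon n+j}]\,d\nu_b$, and apply the factorisation $f-f^{a,b}=(\sqrt f-\sqrt{f^{a,b}})(\sqrt f+\sqrt{f^{a,b}})$ together with $|ab|\leq a^2/(2\alpha)+\alpha b^2/2$ for a free parameter $\alpha>0$. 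Using $\int f\,d\nu_b = 1$ and $|\Theta_{k,x}\eta(x+k)|\leq 1$, this yields a bound of the form $1/\alpha + \alpha\,I_{x+k,\,x+k\varepsilon n+j}(\sqrt f)$, with $I_{a,b}(\sqrt f) := \tfrac12\int(\sqrt{f(\eta^{a,b})}-\sqrt{f(\eta)})^2 d\nu_b$. The Dirichlet form then dominates the relevant gradient sums: Remark~\ref{remsep} for $m=2$ and the explicit SSEP summand in (\ref{defrates}) for $m\geq 3$ both guarantee that the rates $c^{(m)}_{x,y}$ majorise $\mathbbm{1}_{\{|x-y|=1\}}$, so that $\langle\sqrt f,-\mcb L\sqrt f\rangle_{\nu_b}\gtrsim p(1)\sum_\ell I_{\ell,\ell+1}(\sqrt f)$; a path-decomposition of each long exchange into its nearest-neighbour constituents converts this into the required control of the $I_{x+k,\,x+k\varepsilon n+j}$.

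The principal technical difficulty is the simultaneous optimisation of the free parameters $\alpha$ and $B$ in the presence of the macroscopic distance $k\varepsilon n$ separating $x+k$ from the averaging box. The $(\varepsilon n)^{-1}$ averaging in $j$ and the Feynman--Kac prefactor $n^{\gamma-1}/B$ must be balanced against both the entropy cost $C_b/B$ and the path-length cost in the Dirichlet estimate, and the two hypotheses on $\Phi_n$ enter in complementary ways: $\|\Phi_n\|_\infty\leq M_2$ to extract a uniform factor when comparing Dirichlet pieces, and $\tfrac{1}{n}\sum_x|\Phi_n|\leq M_1$ to close the summed $1/\alpha$ contributions. The trade-off closes only in the iterated limit in the stated order (first $n\to\infty$, then $\varepsilon\to 0^+$), and uses crucially that $\gamma<2$; I expect this bookkeeping to be the most delicate part, while the telescoping and the entropy/Feynman--Kac reduction are comparatively mechanical.
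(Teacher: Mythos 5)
Your telescoping decomposition, the entropy/Jensen/Feynman--Kac reduction, the $\Theta_{k,x}$-invariance, and the observation that $c^{(m)}_{x,y}$ majorises $\mathbbm{1}_{\{|x-y|=1\}}$ all match the paper, which uses the analogous prefactors $B^{\ell}_{x,j}$ and $\tilde{B}^{\ell}_{x,j}$. The gap is in the final Dirichlet estimate: a \emph{one-step} replacement from sites directly to $\varepsilon n$-boxes, controlled only by the nearest-neighbour pieces $\sum_z I_{z,z+1}$ of $\mcb D$, cannot close. A nearest-neighbour path decomposition of the exchange $\eta\leftrightarrow\eta^{x+k,\,x+k\varepsilon n+j}$ gives $I_{x+k,\,x+k\varepsilon n+j}(\sqrt f,\nu_b)\lesssim(\varepsilon n)\sum_z I_{z,z+1}(\sqrt f,\nu_b)$, and after summing over $j$ and counting bond repetitions the quadratic form carries an extra factor of order $(\varepsilon n)^2$; the time-change contributes only $n^{\gamma-1}$ to the Feynman--Kac prefactor, so however you optimise the free parameters $\alpha$ and $B$ the resulting bound scales like $n^{2-\gamma}$ (up to a power of $\varepsilon$), which diverges as $n\to\infty$ for fixed $\varepsilon>0$ precisely \emph{because} $\gamma<2$: the restriction $\gamma<2$ works against the purely nearest-neighbour route, not in its favour. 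The paper therefore does the replacement in two stages through an intermediate scale $\ell=\varepsilon n^{\gamma/2}$. Lemma~\ref{lemrep1} replaces sites by $\ell$-boxes using nearest-neighbour jumps only, and the same arithmetic then yields $\ell^2/(n^{\gamma}\varepsilon)=\varepsilon\to 0$; Lemma~\ref{lemrep2} then replaces $\ell$-boxes by $\varepsilon n$-boxes by invoking the Moving Particle Lemma~\ref{mpl}, which uses the genuine long-range jumps of $\mcb L$ to swap two sites a distance $r$ apart at cost $|r|^{\gamma}\mcb D(\sqrt f,\nu_b)$ rather than $|r|^2$; that superdiffusive shortcut is exactly what makes $\gamma<2$ manageable, and your outline has no substitute for it.

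Nor can you sidestep this by appealing to the long-range pieces of $\mcb D$ directly: for $|x-y|>1$ the rate $c^{(m)}_{x,y}(\eta)$ is degenerate, vanishing unless one of the $(m-1)$-site windows next to $x$ or $y$ is fully occupied, so $p(y-x)I_{x,y}(\sqrt f,\nu_b)$ is \emph{not} dominated by $\mcb D(\sqrt f,\nu_b)$. The Moving Particle Lemma is formulated on configurations in $\Omega_x$ having a block of $m$ consecutive particles available, and the paper applies it only on the event $\Omega_i(x,y)$ where one of the two $\ell$-boxes holds at least $m$ particles (the complement contributes negligibly because there nearly all the single-spin differences vanish identically): one shuttles those $m$ auxiliary particles next to one endpoint, performs the long jump, and shuttles them back. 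The intermediate scale $\ell$ is precisely what supplies this pool of auxiliary particles; skipping it, as your one-step scheme does, leaves no mechanism for unfreezing the degenerate long-range rate.
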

In order to apply Lemma \ref{globrep}, we need to ensure that \eqref{boundrep} is satisfied. This is done by applying a classical result regarding the fractional Laplacian, Proposition \ref{propL1} below. We refer the interested reader in its proof to Proposition 3.1 of \cite{CDG} for more details.
\begin{prop} \label{propL1}
	For every $G \in \mcb{S}$, we have
	\begin{equation} \label{L1frac}
		\sup_{n \geq 1}	\frac{1}{n} \sum_{x} \sup_{s \in [0,T]} \big|  [ -(-\Delta)^{\gamma/2}G_s]( \tfrac{x}{n})\big| < \infty.
	\end{equation}
\end{prop}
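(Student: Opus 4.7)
The plan is to reduce to the time-independent, compactly supported case and then split the Riemann sum into a bulk piece near the support of $G$ and a tail piece controlled by the decay at infinity. Since $G \in \mcb S$ we may write $G_s(u) = \sum_{j=0}^k s^j G_j(u)$ with each $G_j \in C_c^\infty(\mathbb R)$, so linearity of the fractional Laplacian together with $|s|^j \leq T^j$ on $[0,T]$ and the triangle inequality yield
\[
\sup_{s \in [0,T]} \bigl| [-(-\Delta)^{\gamma/2}G_s](u) \bigr| \;\leq\; \sum_{j=0}^k T^j \bigl| [-(-\Delta)^{\gamma/2}G_j](u) \bigr|.
\]
Hence it suffices to prove $\sup_{n \geq 1} \tfrac{1}{n}\sum_x |f(x/n)| < \infty$ for a single function $f := -(-\Delta)^{\gamma/2}G$ with $G \in C_c^\infty(\mathbb R)$.

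Next I would establish two pointwise estimates for $f$. Fix $A > 0$ with $\mathrm{supp}(G) \subset [-A,A]$. Using the standard symmetric representation
\[
f(u) \;=\; \frac{c_\gamma}{2} \int_{\mathbb R} \frac{G(u+w) + G(u-w) - 2G(u)}{|w|^{1+\gamma}}\,dw,
\]
I split at $|w|=1$: the inner piece is bounded by $\|G''\|_\infty \int_0^1 w^{1-\gamma}\,dw$, which is finite because $\gamma < 2$, while the outer piece is bounded by $4\|G\|_\infty \int_1^\infty w^{-1-\gamma}\,dw$. This produces a uniform bound $\|f\|_\infty \leq C_G$. For $|u| > 2A$ one has $G(u)=0$ and $|u-v| \geq |u|/2$ throughout the support of $G$, so the principal-value integral collapses to an ordinary one and $|f(u)| \leq C_G\, |u|^{-1-\gamma}$.

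With these two bounds in hand, I split the sum at $|x| = n(A+1)$. The bulk contribution $|x| \leq n(A+1)$ consists of at most $2n(A+1)+1$ terms each of size at most $C_G$, yielding a contribution of order $2(A+1)C_G + O(1/n)$. For the tail, using $|f(x/n)| \leq C_G (|x|/n)^{-1-\gamma} = C_G\, n^{1+\gamma} |x|^{-1-\gamma}$ and comparing the sum to an integral,
\[
\frac{1}{n}\sum_{|x|>n(A+1)} |f(x/n)| \;\leq\; C_G\, n^{\gamma} \sum_{|x|>n(A+1)} |x|^{-1-\gamma} \;\lesssim\; C_G\, n^{\gamma}\, \bigl(n(A+1)\bigr)^{-\gamma} \;=\; O(1),
\]
uniformly in $n$. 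Summing the bulk and tail contributions, and then summing over $j=0,\ldots,k$, delivers the desired uniform bound.

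I expect the main obstacle to be the pointwise $L^\infty$ bound on $f$ near the support of $G$: one must justify the symmetric form of the fractional Laplacian and absorb the singularity at $w=0$ via the second-order cancellation $G(u+w)+G(u-w)-2G(u) = O(w^2)$. For $G \in C_c^\infty(\mathbb R)$ this is standard and presents no genuine difficulty; the remainder of the argument is a routine decomposition of a Riemann sum combined with an integral comparison for the polynomial tail.
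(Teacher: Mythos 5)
Your argument is essentially correct and takes the standard route: decompose $G_s = \sum_{j} s^j G_j$ to reduce to a single $G \in C_c^\infty(\mathbb R)$, establish an $L^\infty$ bound together with a tail decay $|f(u)| \lesssim |u|^{-1-\gamma}$ for $f := -(-\Delta)^{\gamma/2}G$, and then split the Riemann sum into a bulk of $O(n)$ bounded terms and a polynomially decaying tail compared to an integral. The paper does not reproduce a proof here; it defers to Proposition~3.1 of \cite{CDG}, which proceeds via the same two pointwise estimates, so your approach matches the intended one.

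The one thing to repair is the split point. You derive $|f(u)| \leq C_G |u|^{-1-\gamma}$ only for $|u| > 2A$, yet you cut the sum at $|x| = n(A+1)$. When $A > 1$ one has $A+1 < 2A$, so the terms with $n(A+1) < |x| \leq 2nA$ are covered by neither estimate as you have invoked them. Splitting instead at $|x| = n(2A+1)$ (or $n\max\{2A,1\}$) fixes this at no cost: the bulk still consists of $O(n)$ terms each bounded by $\|f\|_\infty$, and the tail comparison gives $n^{\gamma}\,(n(2A+1))^{-\gamma} = O(1)$ uniformly in $n$. Nothing else in the argument needs to change.
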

Thus, by combining Propositions \ref{convdisc} and \ref{propL1} with Lemma \ref{globrep}, we have that \eqref{terprinc} converges, as $n \rightarrow \infty$, to \eqref{weakform}, in $L^1(\mathbb{P}_{\mu_n})$, and we get the integral equation in Definition \ref{eq:dif}.
\section{Proof of Theorem \ref{hydlim}} \label{sec:main_theo}
\subsection{Tightness}
~\label{sec:tight}
Our goal is to prove that  $(\mathbb{Q}_{n})_{n \geq 1}$ is tight with respect to the Skorokhod topology of $\mcb {D}([0,T],  \mcb{M}^{+})$. Following Propositions 4.1.6 and 4.1.7 of \cite{kipnis1998scaling}, it is enough to show that
\begin{equation}\label{tight}
	\lim_{\omega \rightarrow 0} \varlimsup_{n \rightarrow \infty} \sup_{\tau  \in \mcb{T}_{T}, \tilde{t} \leq \omega}\mathbb{P}_{\mu_{n}} \Big(  \eta_{\cdot}^n \in \mcb{D}\left([0,T], \Omega \right): \big| \langle \pi_{\tau + \tilde{t}}^{n},G \rangle - \langle \pi_{\tau}^{n},G \rangle \big| >\varepsilon \Big)=0,
\end{equation}
for every $G \in \mcb S$ and every $\varepsilon > 0$. Above, $\mcb{T}_{T}$ is the set of stopping times bounded by $T$, therefore $\tau + \tilde{t}$ must be read as $\min\{\tau + \tilde{t}, T\}$. 
From \eqref{dynkin}, Markov's and Chebyshev's inequalities, \eqref{tight} is bounded from above by
\begin{align}
	& \lim_{\omega \rightarrow 0} \varlimsup_{n \rightarrow \infty} \sup_{\tau  \in \mcb{T}_{T}, \tilde{t} \leq \omega} \frac{2}{\varepsilon} \mathbb{E}_{\mu_{n}} \Big[ \Big| \int_{\tau }^{\tau + \tilde{t}}  n^{\gamma} \mcb L \langle \pi_{r}^{n},G_r \rangle \, dr \, \Big| \Big] \label{limtight1} \\
	+ & \lim_{\omega \rightarrow 0} \varlimsup_{n \rightarrow \infty} \sup_{\tau  \in \mcb{T}_{T}, \tilde{t} \leq \omega} \frac{4}{\varepsilon^2} \mathbb{E}_{\mu_{n}}\Big[  |\mcb M_{\tau + \tilde{t}}^{n}(G) - \mcb M_{\tau}^{n}(G) \big|^2 \Big]. \label{limtight2}
\end{align}
First it is enough to prove that both limits in last display vanish as $n \rightarrow \infty$, for every $G \in \mcb S$ fixed. 
We begin with the former one. Combining \eqref{terprinc} and \eqref{defR} with Proposition \ref{convdisc} and the fact that  $\lim_{n \rightarrow \infty} Y_1^{G,n} + Y_2^{G,n} = 0$, we can conclude, in particular, that for every $G \in \mcb S$, there exists $C(G)$ such that  $\sup_{s \in [0,T]}  |n^{\gamma} \mcb L \langle \pi^{n}_{s}, G_s  \rangle | \leq C(G)$, therefore the limit in \eqref{limtight1} is equal to zero. From Lemma $A1.5.1$ of \cite{kipnis1998scaling}, the expectation in \eqref{limtight2} is equal to
\begin{align*}
	\mathbb{E}_{\mu_n}\Bigg[ \int_{\tau}^{\tau + \tilde{t}} n^{\gamma}\big[ \mcb L \langle \pi_{s}^{n},G_s\rangle^{2} -2\langle \pi_{s}^{n}, G_s\rangle \mcb L \langle \pi_{s}^{n}, G_s \rangle \big] \, ds \Bigg],
\end{align*}
therefore to finish the proof it is enough to apply the next result.
\begin{prop} \label{boundquad}
	Let $G \in \mcb{S}$. Then
	\begin{equation} \label{boundquad1}
		\sup_{s \in [0,T]}\Big| n^{\gamma}\left(\mcb L \langle \pi_{s}^{n},G _s \rangle^{2} -2\langle \pi_{s}^{n}, G_s \rangle  \mcb L \langle \pi_{s}^{n}, G_s\rangle \right)\Big| \lesssim \max\{ n^{\gamma-2}, n^{-1} \}. 
	\end{equation}
	In particular,  
	\begin{align} 
		\forall \delta_1 >0, \quad \lim_{n \rightarrow \infty} \mathbb{P}_{\mu_n} \Big( \sup_{t \in [0,T]} | \mcb M_{t}^{n}(G) | > \delta_1 \Big) =0. \label{condger2pr3}
	\end{align}
\end{prop}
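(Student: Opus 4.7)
The quantity inside the supremum in \eqref{boundquad1} is $n^{\gamma}$ times the \emph{carré du champ} of $\mcb L$ applied to $f(\eta):=\langle \pi^n_s,G_s\rangle$. From the form \eqref{geninf} of $\mcb L$ one obtains the standard identity
\begin{equation*}
\mcb L f^2(\eta) - 2 f(\eta)\,\mcb L f(\eta) = \frac{1}{4}\sum_{x,y} p(y-x)\, c^{(m)}_{x,y}(\eta)\, [f(\eta^{x,y})-f(\eta)]^2,
\end{equation*}
and substituting $f(\eta)=\tfrac{1}{n}\sum_z G_s(\tfrac{z}{n})\eta(z)$ yields
\begin{equation*}
f(\eta^{x,y})-f(\eta) = \tfrac{1}{n}\,[G_s(\tfrac{y}{n}) - G_s(\tfrac{x}{n})]\,[\eta(x)-\eta(y)].
\end{equation*}
From \eqref{defrates} and Remark \ref{ratpor} I would read off the uniform bound $c^{(m)}_{x,y}(\eta)\le 2m+1$, which together with $|\eta(x)-\eta(y)|\le 1$ reduces \eqref{boundquad1} to proving
\begin{equation*}
\sup_{s\in[0,T]}\; \frac{n^{\gamma}}{n^2}\sum_{x,y} p(y-x)\,[G_s(\tfrac{y}{n}) - G_s(\tfrac{x}{n})]^2 \;\lesssim\; \max\{n^{\gamma-2},\,n^{-1}\}.
\end{equation*}

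I would establish this analytic bound by writing $z:=y-x$ and splitting the resulting sum into short jumps $|z|\le n$ and long jumps $|z|>n$. For short jumps a first-order Taylor expansion gives $|G_s(\tfrac{y}{n})-G_s(\tfrac{x}{n})|\le \|G_s'\|_\infty\, |z|/n$; for long jumps I use only $|G_s|\le \|G_s\|_\infty$. Since $G\in\mcb S$ is compactly supported uniformly in $s\in[0,T]$, the number of $x$ producing a nonzero summand is $O(n+|z|)$. Combining this with the decay $p(z)\asymp |z|^{-1-\gamma}$ and summing over $z$ yields the claimed bound; the only care needed is to keep track of which range of $|z|$ dominates for each $\gamma\in(0,2)$.

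Finally, for \eqref{condger2pr3}, Dynkin's formula identifies the predictable quadratic variation of the martingale $\mcb M^n(G)$ from \eqref{dynkin} as
\begin{equation*}
\langle \mcb M^n(G)\rangle_t = \int_0^t n^{\gamma}\bigl(\mcb L\langle \pi^n_s,G_s\rangle^2 - 2\langle \pi^n_s,G_s\rangle\,\mcb L\langle \pi^n_s,G_s\rangle\bigr)\, ds,
\end{equation*}
which by \eqref{boundquad1} is pathwise $\lesssim T\cdot \max\{n^{\gamma-2},n^{-1}\}$. Doob's $L^2$ maximal inequality then gives $\mathbb{E}_{\mu_n}[\sup_{t\le T}|\mcb M^n_t(G)|^2]\to 0$, and Chebyshev's inequality delivers \eqref{condger2pr3}. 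The only genuinely technical point in the plan is the three-regime summation estimate in the second paragraph; it is routine in the long-range-exclusion literature but requires careful bookkeeping of the competing exponents $n^{\gamma-2}$ and $n^{-1}$.
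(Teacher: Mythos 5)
Your plan is correct and matches the paper's proof step for step: the same carr\'e-du-champ identity, the same substitution $f(\eta^{x,y})-f(\eta)=\tfrac1n[G_s(\tfrac yn)-G_s(\tfrac xn)][\eta(x)-\eta(y)]$, the same uniform bounds $c^{(m)}_{x,y}\le 2m+1$ and $[\eta(x)-\eta(y)]^2\le 1$, and then Doob's maximal inequality on the quadratic variation for \eqref{condger2pr3}. The only difference is that where the paper closes the resulting deterministic summation estimate by citing Proposition A.10 of \cite{CGJ2}, you propose to prove it directly by the short/long-jump split with the Taylor bound on $|z|\le n$ and the support-size count; carried out carefully this gives $\lesssim n^{-1}$ for every $\gamma\in(0,2)$, which is in fact at most $\max\{n^{\gamma-2},n^{-1}\}$, so your self-contained version is if anything slightly sharper than the quoted bound.
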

\begin{proof}
	We begin by showing \eqref{condger2pr3} from \eqref{boundquad1}. Since $\left(  |\mcb M_{t}^{n}(G)| \right)_{t \geq 0}$ is a non-negative submartingale, Doob's inequality leads to
	\begin{align*}
		& \mathbb{P}_{\mu_n}  \Big( \sup_{s \in [0,T]} | \mcb  M_{s}^{n}(G)   | > \delta_1  \Big)  \leq  \dfrac{1}{\delta_1} \left( \mathbb{E}_{\mu_n} \left[ \big( \mcb  M_{T}^{n}(G)   \big)^2 \right] \right)^{ 1 / 2} \\
		=&  \frac{1}{\delta_1}   \Big( \int_0^T \sup_{s \in [0,T]} |n^{\gamma} \left(\mcb L[\langle \pi_{s}^{n},G_s  \rangle]^{2}-2\langle \pi_{s}^{n},G_s  \rangle \mcb L \langle \pi_{s}^{n},G_s  \rangle\right)| \, ds \,  \Big)^{ 1 / 2},
	\end{align*} 
	which goes to zero as $n \rightarrow \infty$, for every $\delta_1 >0$ fixed, due to \eqref{boundquad1}. Now we go to the second part of the proof. After  some algebraic manipulations, we rewrite the expression on the left-hand side of \eqref{boundquad1} as
	\begin{align*}
		\sup_{s \in [0,T]} \frac{n^{\gamma}}{4n^{2}}\sum_{x,y } [ G_s\left(\tfrac{y}{n}\right)-G_s\left(\tfrac{x}{n}\right) ]^{2} p(x-y)  c^{(m)}_{x,y}(\eta_s^n) [\eta_{s}^n(x)-\eta_{s}^n(y) ]^{2} .
	\end{align*}
	Due to \eqref{defrates}, we get $|c^{(m)}_{x,y}(\eta_s^n)| \leq 2m +1$. Combining this with with Proposition A.10 of \cite{CGJ2} and with the fact that $[\eta_{s}^n(x)-\eta_{s}^n(y) ]^{2} \leq 1$, the proof ends.
\end{proof}

\subsection{Characterization of limit points}\label{sec:charac}

From the results of Section \ref{sec:tight}, we know  that $(\mathbb{Q}_n)_{n \geq 1}$ has at least one limit point $\mathbb{Q}$. From Section 4.1 of \cite{kipnis1998scaling}, since every site has at most one particle, any limit point $\mathbb{Q}$ is concentrated on trajectories of measures that are absolutely continuous with respect to the Lebesgue measure, i.e.
\begin{equation} \label{defdens}
	\pi_t(du)=\rho(t,u) \, du,
\end{equation}
for (almost) every $t$ on $[0,T]$ with $0 \leq \rho \leq 1$ on $[0,T] \times \mathbb{R}$. In this subsection we will prove that $\mathbb{Q}$ is concentrated on trajectories such that $\rho$ satisfies the first condition of weak solutions in Definition \ref{eq:dif}. 
\begin{prop} 
	If $\mathbb{Q}$ is a limit point of $(\mathbb{Q}_{n})_{n \geq 1}$ then
	\begin{equation*}
		\mathbb{Q}\Big(  \pi_{\cdot} \in \mcb{D} \left([0,T], \mcb{M}^{+}\right): \forall t\in[0,T], \forall G \in \mcb{S}, \quad F(t,\rho,G,g)=0  \Big)=1,
	\end{equation*} 
	where $F(t,\rho,G,g)$ is given in \eqref{eq:int_eq}.
\end{prop}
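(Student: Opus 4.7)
The plan is, after passing to a weakly convergent subsequence $\mathbb{Q}_{n_j}\to\mathbb{Q}$, to fix $G\in\mcb S$ and $\delta>0$ and prove
$$\mathbb{Q}\!\left(\sup_{t\in[0,T]}|F(t,\rho,G,g)|>\delta\right)=0;$$
the full statement will then follow by choosing a countable dense family of test functions and using continuity of $F$ in $G$. The strategy is to express $F(t,\rho,G,g)$ as a limit (in probability under $\mathbb{P}_{\mu_n}$) of a functional of $\pi_\cdot^n$ that agrees, modulo the vanishing martingale $\mcb M_t^n(G)$, with the decomposition already carried out in Section \ref{sec:heuristics}.

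By Dynkin's identity \eqref{dynkin} and Proposition \ref{boundquad}, the martingale term $\mcb M_t^n(G)$ vanishes in probability uniformly in $t$. The linear pieces $\langle\pi_t^n,G_t\rangle$, $\langle\pi_0^n,G_0\rangle$ and $\int_0^t\langle\pi_s^n,\partial_s G_s\rangle\,ds$ are continuous functionals of $\pi_\cdot^n$ on trajectories continuous at $t$ (a property shared by $\mathbb{Q}$-a.e.\ trajectory thanks to the tightness estimate \eqref{tight}), so by weak convergence they converge to $\langle\rho_t,G_t\rangle$, $\langle\rho_0,G_0\rangle$ and $\int_0^t\langle\rho_s,\partial_s G_s\rangle\,ds$, respectively; the association hypothesis in Definition \ref{associated profile} then identifies $\langle\rho_0,G_0\rangle$ with $\langle g,G_0\rangle$ $\mathbb{Q}$-almost surely.

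The core of the proof is the generator term $\int_0^t n^\gamma\mcb L\langle\pi_s^n,G_s\rangle\,ds$, which by \eqref{terprinc}--\eqref{terext2} splits into a principal part plus the extra term $\int_0^t\mcb R_n^G(s)\,ds$; the latter vanishes in $L^1(\mathbb{P}_{\mu_n})$ by \eqref{boundL1RnG}. Proposition \ref{convdisc} lets me replace $n^\gamma\mcb K_n G_s(x/n)$ by $[-(-\Delta)^{\gamma/2}G_s](x/n)$ uniformly in $s$ with negligible error, so the principal part becomes, up to $o(1)$,
\begin{equation*}
\int_0^t\frac{1}{n}\sum_x[-(-\Delta)^{\gamma/2}G_s](\tfrac{x}{n})\prod_{i=0}^{m-1}\eta_s^n(x+i)\,ds.
\end{equation*}
Proposition \ref{propL1} ensures the hypothesis \eqref{boundrep} of the Replacement Lemma \ref{globrep} for $\Phi_n(s,u)=[-(-\Delta)^{\gamma/2}G_s](u)$, so the lemma allows me to substitute the microscopic product by $\prod_{i=0}^{m-1}\overrightarrow\eta_s^{\varepsilon n}(x+i\varepsilon n)$ at the cost of an error that vanishes as $n\to\infty$ and then $\varepsilon\to 0^+$. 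After this substitution the integrand is a continuous functional of $\pi_s^n$ (each $\overrightarrow\eta_s^{\varepsilon n}$ is essentially the mass $\pi_s^n$ assigns to an interval of length $\varepsilon$), so weak convergence of $\mathbb{Q}_n$ to $\mathbb{Q}$ yields the limit
\begin{equation*}
\int_0^t\!\int_\mathbb{R}[-(-\Delta)^{\gamma/2}G_s](u)\prod_{i=0}^{m-1}\frac{1}{\varepsilon}\int_{u+i\varepsilon}^{u+(i+1)\varepsilon}\rho(s,v)\,dv\,du\,ds,
\end{equation*}
and then letting $\varepsilon\to 0^+$, combined with the Lebesgue differentiation theorem and dominated convergence (using $0\leq\rho\leq 1$ and the $L^1(\mathbb{R})$ bound on $[-(-\Delta)^{\gamma/2}G_s]$ uniform in $s$ from Proposition \ref{propL1}), delivers $\int_0^t\langle\rho_s^m,[-(-\Delta)^{\gamma/2}G_s]\rangle\,ds$. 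Collecting all the pieces gives $F(t,\rho,G,g)=0$ $\mathbb{Q}$-a.s., which is the claim.

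The main obstacle is the Replacement Lemma step: closing the nonlinear microscopic product $\prod_{i=0}^{m-1}\eta_s^n(x+i)$ into a functional of the mesoscopic empirical measure is the genuinely nonlinear feature absent from the linear case $m=1$ and relies crucially on the entropy bound of Assumption (A). A secondary technical point is the justification that the evaluation at a fixed time $t$ is $\mathbb{Q}$-a.s.\ continuous; this follows from the same modulus of continuity used to establish tightness in Section \ref{sec:tight}.
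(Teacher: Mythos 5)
Your proposal follows the paper's overall strategy: reduce to showing the $\mathbb{Q}$-probability of $\{\sup_t|F(t,\rho,G,g)|>\delta\}$ vanishes for each $G\in\mcb S$ and $\delta>0$, split off the $\rho_0-g$ piece via Definition~\ref{associated profile}, kill the martingale via Proposition~\ref{boundquad}, handle the extra term via \eqref{boundL1RnG}, replace the discrete fractional Laplacian via Proposition~\ref{convdisc}, invoke Proposition~\ref{propL1} to check \eqref{boundrep}, close the nonlinear term via Lemma~\ref{globrep}, and finish with Lebesgue differentiation. All of the decisive ingredients are present.

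There is one genuine gap in how you pass from the microscopic estimates under $\mathbb{P}_{\mu_n}$ to the statement under the limit law $\mathbb{Q}$. You claim that after replacing $\prod_{i=0}^{m-1}\eta_s^n(x+i)$ by $\prod_{i=0}^{m-1}\overrightarrow{\eta}_s^{\varepsilon n}(x+i\varepsilon n)$ ``the integrand is a continuous functional of $\pi_s^n$\dots so weak convergence of $\mathbb{Q}_n$ to $\mathbb{Q}$ yields the limit.'' This is not correct as stated: the map $\pi\mapsto\langle\pi,\overrightarrow{i_\varepsilon^{u}}\rangle$ is not a continuous functional on $\mcb M^+$ in the weak topology, precisely because $\overrightarrow{i_\varepsilon^u}$ is an indicator (it is only upper/lower semicontinuous depending on the choice of half-open interval), and the same obstacle hits $\overrightarrow{\eta}_s^{\varepsilon n}$. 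The paper's proof handles this by working \emph{backward}: starting from the $\mathbb{Q}$-probability \eqref{charact22}, observing explicitly that $\overrightarrow{i_\varepsilon^{u+j\varepsilon}}$ is not continuous, approximating it by continuous functions with an error that vanishes as $\varepsilon\to 0^+$, and then applying Portmanteau's theorem to bound the $\mathbb{Q}$-probability of the event by a $\varliminf$ of $\mathbb{P}_{\mu_n}$-probabilities, which are then estimated microscopically. Your forward-pushing argument needs this approximation-plus-Portmanteau step to be made rigorous; without it the key passage-to-the-limit is unjustified. Once you insert that step, the rest of your outline is sound and matches the paper.
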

Last proposition identifies the profile $\rho(t,u)$ as a solution to the PDE. 
\begin{proof}
	It is enough to verify that for any $\delta>0$ and any $G \in \mcb{S}$,
	\begin{equation}\label{charact1}
		\mathbb{Q} \Big( \pi_{\cdot} \in \mcb{D} \left([0,T], \mcb{M}^{+}\right): \sup_{t \in [0,T]} |F(t,\rho,G,g)|> \delta \, \Big)=0.
	\end{equation}
	In order to simplify the notation, we will omit $\pi_{\cdot}$ from the sets where we are looking at. From the definition of $F$ in Definition \ref{eq:dif}, last probability is bounded by 
	\begin{align}
		\mathbb Q\Big( \sup_{t \in [0,T]}  \Big| & \int_{\mathbb{R}} [ \rho_t(u) G_t(u) - \rho_0(u) G_0(u)] \, du \nonumber \\
		- &  \int_0^t \int_{\mathbb{R}} \rho_s(u)  \big[ \big( \partial_s  +  [ -(- \Delta)^{ \gamma / 2}  ] \big) G_s(u)  \big]  \,  du \, ds \,   \Big| > \frac{\delta}{2} \Big) \label{f1term1sdif} \\
		\mathbb{Q} \Big(  \Big| & \int_{\mathbb{R}} \left[ \rho_0(u) -  g(u) \right] G_0(u) \, du \, \Big| > \dfrac{\delta}{2}  \Big). \label{f1term2sdif} 
	\end{align}
	The term in \eqref{f1term2sdif} is equal to zero, since  $\mathbb{Q}$ is a limit point of $(\mathbb{Q}_{n})_{n \geq 1}$ and $\mathbb{Q}_{n}$ is induced by $\mathbb{P}_{\mu_n}$, which is induced by $\mu_n$, which is associated with $g$, see Definition \ref{associated profile}. Then we are done if we can prove that the term in \eqref{f1term1sdif} is also equal to zero. 
	
	The proof for $m=1$ makes use of the same arguments applied for showing Proposition 4.2 in \cite{CGJ2}, therefore we omit it. The proof for $m \geq 2$ is analogous to the proof of Proposition 5.1 in \cite{CDG}, therefore we omit many steps. For $\varepsilon>0$ and $u \in \mathbb{R}$ fixed, we define $\overrightarrow{i_{\varepsilon}^{ u}}$ by 
	\begin{align*}
		\forall v \in \mathbb{R}, \quad \overrightarrow{i_{\varepsilon}^{ u }}(v) := \frac{1}{\varepsilon} \mathbbm{1}_{ (u, u+\varepsilon] }(v).
	\end{align*}
	Putting this together with \eqref{defdens}, we get 
	$
	\langle \pi_s, \overrightarrow{i_{\varepsilon}^{ u } }\rangle = \langle \rho_s, \overrightarrow{i_{\varepsilon}^{ u } }\rangle = \frac{1}{\varepsilon} \int_{u}^{u+\varepsilon} \rho(s,v) dv,
	$
	and since  $\rho \in [0,1]$ and Lebesgue's Differentiation Theorem, we conclude that
	\begin{equation*} 
		\lim_{\varepsilon \rightarrow 0^{+}} \Bigg|\, \prod_{j=0}^{m-1} \langle \pi_s, \overrightarrow{i_{\varepsilon}^{ u + j \varepsilon } }\rangle - \rho^m(s,u) \, \Bigg| = 0,
	\end{equation*}
	for almost every $u \in \mathbb{R}$. Therefore we are done if we can prove that
	\begin{equation}  \label{charact22}
		\begin{split}
			\varlimsup_{\varepsilon \rightarrow 0^{+}} \mathbb{Q} \Big( \sup_{t \in [0,T] } \Big| & \langle \pi_t, G_t \rangle -\langle \pi_0, G_0 \rangle - \int_0^t \langle \pi_s, \partial_s G_s \rangle \, ds \\
			-& \int_0^t \int_{\mathbb{R}} \prod_{j=0}^{m-1} \langle \pi_s, \overrightarrow{i_{\varepsilon}^{ u + j \varepsilon } }\rangle [-(-\Delta)^{\gamma/2}G_s](u) \, du \, ds \, \Big| > \frac{\delta}{3} \Big).
		\end{split}
	\end{equation}
	We observe that the functions $\overrightarrow{i_{\varepsilon}^{ u + j \varepsilon } }$ are not continuous; nevertheless, those functions can be approximated by continuous functions in such a way that the error vanishes as $\varepsilon \rightarrow 0^{+}$. Thus, by choosing convenient approximations and applying Portmanteau's Theorem (see Theorem 2.1 in \cite{billingsley}) in the same way as in the proof of Proposition 5.1 in \cite{CDG}, the display in \eqref{charact22} is bounded from above by
	\begin{equation*} 
		\begin{split}
			\varlimsup_{\varepsilon \rightarrow 0^{+}} & \varliminf_{n \rightarrow \infty} \,\,  \mathbb{P}_{\mu_n} \Big( \sup_{t \in [0,T]} \Big|  \int_{0}^{t} \frac{n^{\gamma}}{2n}\sum_{x }\mcb{K}_{n}G_s( \tfrac{x}{n}) \Big[ \prod_{j=0}^{m-1} \eta_s^n(x-j) +  \prod_{j=0}^{m-1} \eta_s^n(x+j)  \Big] \, ds \nonumber \\
			+& \mcb M_t^n(G) + \int_{0}^{t}  \mcb {R}_{n}^G(s) ds  - \int_{0}^{t} \int_{\mathbb{R}} [ -(-\Delta)^{\gamma/2}G_s](u) \prod_{j=0}^{m-1} \langle \pi_s^n, \overrightarrow{i_{\varepsilon}^{ u + j \varepsilon } }\rangle   \, du \, ds \, \Big| > \frac{\delta}{4}\Big).
		\end{split}
	\end{equation*}
	Above we applied \eqref{terprinc} and \eqref{defR}, besides summing and subtracting $\int_{0}^{t}n^{\gamma} \mcb L \langle \pi_{s}^{n}, G_s\rangle\, ds$. Since the error from changing the integral in the space variable by its Riemann sum is or order $n^{-1}$, it is enough to prove that
	\begin{equation} \label{charact91}
		\lim_{n \rightarrow \infty} \,\,  \mathbb{P}_{\mu_n} \Big( \sup_{t \in [0,T]} \Big|\mcb M_t^n(G) + \int_{0}^{t}  \mcb {R}_{n}^G(s) \, ds \,   \Big| > \frac{\delta}{8 }\Big) =0
	\end{equation}
	and
	\begin{equation*} 
		\begin{split}
			\varlimsup_{\varepsilon \rightarrow 0^{+}} \varliminf_{n \rightarrow \infty} \,\, \mathbb{P}_{\mu_n}\Big( \sup_{t \in [0,T]} \Big| \int_{0}^{t} \Big\{& \frac{1}{n} \sum_{x}   n^{\gamma} [ \mcb{K}_n G_s (\tfrac{x+m-1}{n}) + \mcb{K}_n G_s (\tfrac{x}{n}) ]   \prod_{j=0}^{m-1} \eta_s^n(x+j) \\
			-& \frac{1}{n} \sum_{x}  [ -(-\Delta)^{\gamma/2}G_s](\tfrac{x+m-1}{n}) \prod_{j=0}^{m-1} \langle \pi_s^n, \overrightarrow{i_{\varepsilon}^{ \frac{x}{n} + j \varepsilon } }\rangle \\
			-& \frac{1}{n} \sum_{x}  [ -(-\Delta)^{\gamma/2}G_s](\tfrac{x}{n}) \prod_{j=0}^{m-1} \langle \pi_s^n, \overrightarrow{i_{\varepsilon}^{ \frac{x}{n} + j \varepsilon } }\rangle       \Big\} \, ds \, \Big| > \frac{\delta}{8}\Big) = 0.
		\end{split}
	\end{equation*}
	By combining \eqref{condger2pr3} with \eqref{boundL1RnG} and Markov's inequality, we get \eqref{charact91}. 
	Here and in what follows we interpret $\varepsilon n$ as $\lfloor \varepsilon n\rfloor$. Putting together the estimate $$\Big|  \prod_{j=0}^{m-1} \langle \pi_s^n, \overrightarrow{i_{\varepsilon}^{ \frac{x}{n} + j \varepsilon } }\rangle - \prod_{j=0}^{m-1} \overrightarrow{\eta}_s^{\varepsilon n}(x + j \varepsilon n) \Big| \lesssim (\varepsilon n)^{-1}$$ with  \eqref{L1frac} and the fact that $[ -(-\Delta)^{\gamma/2}G_s] \in L^1(\mathbb{R})$, we are done if we can show that
	\begin{align*} 
		&\varlimsup_{\varepsilon  \rightarrow 0^{+}} \varliminf_{n \rightarrow \infty} \,\, \mathbb{P}_{\mu_n}\Big( \sup_{t \in [0,T]} \Big| \int_{0}^{t} \Big\{ \frac{1}{n} \sum_{x}   n^{\gamma} [ \mcb{K}_n G_s (\tfrac{x+m-1}{n}) + \mcb{K}_n G_s (\tfrac{x}{n}) ]   \prod_{j=0}^{m-1} \eta_s^n(x+j)  \nonumber \\
		-& \frac{1}{n} \sum_{x} \big[  [ -(-\Delta)^{\gamma/2}G_s](\tfrac{x+m-1}{n}) + [ -(-\Delta)^{\gamma/2}G_s](\tfrac{x}{n}) \big]  \prod_{j=0}^{m-1} \overrightarrow{\eta}_s^{\varepsilon n}(x + j \varepsilon n)  \Big\} \, ds \,\Big| > \frac{\delta}{8}\Big).
	\end{align*}
	is equal to zero.
	Last display is bounded by the sum of the next three terms:
	\begin{align*} 
		\varlimsup_{\varepsilon \rightarrow 0^{+}} \varliminf_{n \rightarrow \infty} \,\, \mathbb{P}_{\mu_n} \Big( \sup_{t \in [0,T]} \Big| \int_{0}^{t} \frac{1}{n} \sum_{x}  \big| \Big(n^{\gamma} \mcb{K}_n G_s -    [ -(-\Delta)^{\gamma/2}G_s]\Big)(\tfrac{x+m-1}{n})   \big| \,  ds \, \Big| > \frac{\delta}{24} \Big),
	\end{align*}
	\begin{align*}
		\varlimsup_{\varepsilon \rightarrow 0^{+}} \varliminf_{n \rightarrow \infty} \,\, \mathbb{P}_{\mu_n} \Big( \sup_{t \in [0,T]} \Big| \int_{0}^{t} \frac{1}{n} \sum_{x}  \big| n^{\gamma} \mcb{K}_n G_s (\tfrac{x}{n})-    [ -(-\Delta)^{\gamma/2}G_s](\tfrac{x}{n})   \big| \,  ds \, \Big| > \frac{\delta}{24} \Big),
	\end{align*}
	\begin{align*} 
		\varlimsup_{\varepsilon \rightarrow 0^{+}} \varliminf_{n \rightarrow \infty} \,\, \mathbb{P}_{\mu_n} \Big( \sup_{t \in [0,T]} \Big| \int_{0}^{t} &\frac{1}{n} \sum_{x} \big( [ -(-\Delta)^{\gamma/2}G_s](\tfrac{x+m-1}{n}) + [ -(-\Delta)^{\gamma/2}G_s](\tfrac{x}{n}) \big)\nonumber \\
		\times & \Big[ \prod_{j=0}^{m-1} \eta_s^n(x+j)    - \prod_{j=0}^{m-1} \overrightarrow{\eta}_s^{\varepsilon n}(x + j \varepsilon n)  \Big] \, ds \, \Big| > \frac{\delta}{24} \Big). 
	\end{align*}
	In the first and second terms of last display, we used the fact that $|\eta_s^n (\cdot)| \leq 1$, for every $s \in [0,T]$. Combining Markov's inequality with Proposition \ref{convdisc},  we conclude that  they are equal to zero. Finally, since $[ -(-\Delta)^{\gamma/2}G(s, \cdot)] \in L^1(\mathbb{R}) \cap L^{\infty}(\mathbb{R})$ (due to Proposition \ref{propL1}), from Lemma \ref{globrep}, the last term in last display is equal to zero. This ends the proof.
\end{proof}

\subsection{Energy estimates}
\label{sec:EE}
In this subsection we  prove that $\mathbb{Q}$ is concentrated on trajectories of measures whose density  $\rho$ satisfies the second condition of weak solutions in Definition \ref{eq:dif}. We do so by deriving some \textit{energy estimates}, in the sense that $\mathbb{Q}$ is concentrated in trajectories of measures such that a transformation of the density $\rho$ given in \eqref{defdens} belongs to some Hilbert space. Here and in what follows, $b$ is the constant given in Assumption (A). We do not prove the next result but we refer the reader to \cite{casodif} for a proof. In what follows, we denote $Y:=L^2([0,T] \times \mathbb{R})$ and $\| \cdot \|_{Y}$ denotes the usual norm in $Y$.
\begin{prop} \label{propestenergstat}
	Under Assumption (A), we have $\mathbb{E}_{\mathbb{Q}}  \big[ \| \rho - b \|_{Y} \big] < \infty$.
\end{prop}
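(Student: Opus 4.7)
The plan is to combine two facts: (i) the initial profile satisfies $g-b\in L^2(\mathbb{R})$ with a bound depending only on $C_b$, coming from Assumption~(A) and the entropy inequality applied to linear test functions; and (ii) the map $t\mapsto\|\rho_t-b\|_{L^2(\mathbb{R})}$ is non-increasing along $\mathbb{Q}$-typical trajectories, which will follow from the weak formulation $F(t,\rho,G,g)=0$ established in Subsection~\ref{sec:charac}. Together these will give a deterministic bound $\|\rho-b\|_Y\leq\sqrt{TC_b/2}$, $\mathbb{Q}$-almost surely, which is strictly stronger than what is asked.

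For step (i), I would apply the entropy inequality $\mathbb{E}_{\mu_n}[F]\leq H(\mu_n|\nu_b)+\log\mathbb{E}_{\nu_b}[e^F]$ to $F=\lambda\sum_x\varphi(x/n)(\eta(x)-b)$ for $\varphi\in C_c^\infty(\mathbb{R})$ and $\lambda>0$. Since the coordinates $\{\eta(x)\}_x$ are i.i.d.\ $\mathrm{Ber}(b)$ under $\nu_b$, Hoeffding's lemma gives $\log\mathbb{E}_{\nu_b}[e^F]\leq(\lambda^2/8)\sum_x\varphi(x/n)^2$. Combined with Assumption~(A), dividing by $n\lambda$ and optimizing over $\lambda>0$ yields
\[
\Big|\mathbb{E}_{\mu_n}\Big[\tfrac{1}{n}\sum_x\varphi(x/n)(\eta(x)-b)\Big]\Big|\;\leq\;\sqrt{\tfrac{C_b}{2n}\sum_x\varphi(x/n)^2}.
\]
As $n\to\infty$, the left side converges to $|\langle g-b,\varphi\rangle|$ (because $(\mu_n)$ is associated with $g$ and the summand is uniformly bounded thanks to the compact support of $\varphi$) and the right side to $\sqrt{C_b/2}\,\|\varphi\|_{L^2(\mathbb{R})}$, so by duality $\|g-b\|_{L^2(\mathbb{R})}\leq\sqrt{C_b/2}$.

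For step (ii), by the characterization of limit points, $\mathbb{Q}$-a.s.\ $\rho$ satisfies $F(t,\rho,G,g)=0$ for every $G\in\mcb S$. I would insert $G_s(u)=\rho_s(u)-b$ (suitably regularized in space and time to become admissible) into this identity and pass to the limit of the approximations. This gives formally the energy identity $\tfrac{1}{2}\partial_t\|\rho_t-b\|_{L^2(\mathbb{R})}^2=-\langle \rho_t-b,[-(-\Delta)^{\gamma/2}](\rho_t^m-b^m)\rangle$, and by the Gagliardo representation of the fractional Laplacian together with symmetrization in $(u,v)$,
\[
\langle \rho_t-b,\,[-(-\Delta)^{\gamma/2}](\rho_t^m-b^m)\rangle\;=\;\frac{c_\gamma}{2}\iint_{\mathbb{R}^2}\frac{(\rho_t(u)-\rho_t(v))(\rho_t^m(u)-\rho_t^m(v))}{|u-v|^{1+\gamma}}\,du\,dv\;\geq\;0,
\]
the non-negativity being immediate from monotonicity of $x\mapsto x^m$ on $[0,1]$. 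Hence $\|\rho_t-b\|_{L^2(\mathbb{R})}^2$ is non-increasing in $t$, and combining with step (i) gives $\|\rho_t-b\|_{L^2(\mathbb{R})}\leq\sqrt{C_b/2}$ for every $t\in[0,T]$; integrating in time yields $\|\rho-b\|_Y\leq\sqrt{TC_b/2}$, $\mathbb{Q}$-a.s., which is even deterministic.

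The principal obstacle is the rigorous justification of step (ii): the intended test function $G_s=\rho_s-b$ is only known to be bounded (not smooth, not compactly supported, and its $L^2$-integrability is precisely what we want to prove). The apparent circularity can be avoided by first working with localized test functions of the form $G^{\varphi,\delta}_s(u)=\varphi(u)^2(\rho^\delta_s(u)-b)$ for $\varphi\in C_c^\infty(\mathbb{R})$ and a spatial mollification $\rho^\delta$, deriving a weighted energy estimate $\iint\varphi^2(\rho-b)^2\leq C\|\varphi\|_{L^2}^2$ uniform in $\varphi$, and then letting $\varphi\uparrow 1$ by monotone convergence. This approximation is the technical heart of the argument in \cite{casodif} for the diffusive case and adapts to the fractional setting with only minor functional-analytic changes.
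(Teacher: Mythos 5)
Your step (i) is sound and in fact already contains the kernel of the correct argument. Your step (ii), however, has a genuine circularity and several technical gaps; and it is also a detour, because the key observation you are missing is that the entropy bound of Assumption (A) propagates to all times, so that step (i) applies at every $t$ and step (ii) becomes unnecessary.

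On the circularity: in order for the Gagliardo identity
\[
\langle \rho_t-b,\,[-(-\Delta)^{\gamma/2}](\rho^m_t-b^m)\rangle
=\frac{c_\gamma}{2}\iint_{\mathbb R^2}\frac{(\rho_t(u)-\rho_t(v))(\rho_t^m(u)-\rho_t^m(v))}{|u-v|^{1+\gamma}}\,du\,dv
\]
to make sense (and a fortiori to be non-negative rather than of the form $\infty-\infty$), you need at least that the integral on the right is absolutely convergent near the diagonal, which is exactly the condition $\rho^m_t-b^m\in\mcb H^{\gamma/2}$. But that is the content of Proposition \ref{estenergnlin}, whose proof (via Lemma \ref{lemestfrac}) \emph{requires} Corollary \ref{corestenergstat}, which is itself a direct consequence of the proposition you are trying to prove. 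Your PDE energy argument thus presupposes the regularity you are trying to establish. On the approximation: the weak formulation $F(t,\rho,G,g)=0$ is established only for $G\in\mcb S$, i.e.\ space-time polynomials with $C_c^\infty$ coefficients, and extending it to $G^{\varphi,\delta}_s=\varphi^2(\rho^\delta_s-b)$ is not just a matter of density: $\rho$ has no a priori time regularity, so $\int_0^t\langle\rho_s,\partial_s G^{\varphi,\delta}_s\rangle\,ds$ is not controlled; and more seriously, the fractional Laplacian is nonlocal and has no Leibniz rule, so $[-(-\Delta)^{\gamma/2}](\varphi^2(\rho^\delta_s-b))$ does not split into a weighted Dirichlet-form piece plus commutator errors as in the diffusive Caccioppoli argument. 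The assertion that the passage from \cite{casodif} to the fractional setting involves ``only minor functional-analytic changes'' is precisely the gap.

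The route consistent with the paper (and with the internal logic of Subsection \ref{sec:EE}, where both Lemma \ref{lemestfrac} and Proposition \ref{estenergnlin} are proved by a particle-level variational/duality argument, not by PDE energy identities) is the following. Since $\nu_b$ is invariant, the relative entropy is non-increasing along the dynamics, so $H(\mu_n\mathcal S_{tn^\gamma}\,|\,\nu_b)\le H(\mu_n|\nu_b)\le C_bn$ for every $t\ge 0$. Your Hoeffding-plus-entropy computation from step (i) therefore applies verbatim at each fixed $t$, yielding (after integrating in $t$ and optimizing in $\lambda$) a bound of the form
\[
\mathbb E_{\mathbb P_{\mu_n}}\Big[\int_0^T\frac{1}{n}\sum_x H(t,\tfrac{x}{n})\big(\eta^n_t(x)-b\big)\,dt - K_0\,\frac{1}{n}\int_0^T\sum_x H(t,\tfrac{x}{n})^2\,dt\Big]\le K_1
\]
for smooth compactly supported $H$. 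One then passes this through the weak convergence $\mathbb Q_{n'}\Rightarrow\mathbb Q$ and upgrades it to $\mathbb E_{\mathbb Q}\big[\max_{1\le i\le j}\{\langle\rho-b,H_i\rangle_Y-K_0\|H_i\|_Y^2\}\big]\le K_1$ via the exp-max trick (as in \eqref{eq:mn} and its use in Proposition \ref{estenergnlin}), and concludes by monotone convergence and Riesz duality that $\mathbb E_{\mathbb Q}[\|\rho-b\|_Y]<\infty$. No weak formulation, no nonlocal product rule, no time regularity of $\rho$. Your step (i) had the right idea; the missing insight is entropy monotonicity under the invariant measure, which replaces your entire step (ii) by nothing.
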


Observe that {since $b \in (0,1)$, we get}
\begin{align*}
	|\lambda^m -b^m| = |\lambda-b| \Big| \sum_{i=0}^{m-1} \lambda^i b^{m-1-i} \Big| \leq  |\lambda-b| \sum_{i=0}^{\infty} b^{i} = \frac{|\lambda-b|}{1-b},
\end{align*}
for every $\lambda \in [0,1]$ and every $m \geq 1$. Therefore, the next result is a direct consequence of the previous result. 
\begin{cor} \label{corestenergstat}
	Under Assumption (A),  there exists $K_2 >0$ such that
	for all $ m \geq 1$ $\mathbb{E}_{\mathbb{Q}}  \big[ \| \rho^m - b^m \|_{Y} \big] \leq K_2.$
\end{cor}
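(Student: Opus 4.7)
The plan is to leverage the pointwise inequality displayed just before the statement, upgrade it to the $Y$-norm via monotonicity of the integral, and then take expectation under $\mathbb{Q}$ to invoke Proposition \ref{propestenergstat}. Concretely, since $\mathbb{Q}$ is concentrated on trajectories $(\pi_t)_{t \in [0,T]}$ with $\pi_t(du) = \rho(t,u)\,du$ and $\rho(t,u) \in [0,1]$ for almost every $(t,u) \in [0,T] \times \mathbb{R}$, I would apply the inequality
\begin{equation*}
	|\lambda^m - b^m| \leq \frac{|\lambda - b|}{1-b}
\end{equation*}
pointwise with $\lambda = \rho(t,u)$. This yields the pointwise bound $|\rho^m(t,u) - b^m| \leq (1-b)^{-1}|\rho(t,u) - b|$ almost everywhere on $[0,T] \times \mathbb{R}$, and the crucial feature is that the constant $(1-b)^{-1}$ is independent of $m$ because the geometric series $\sum_{i \geq 0} b^i$ converges (recall $b \in (0,1)$).

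Next I would square this estimate, integrate over $[0,T] \times \mathbb{R}$, and take square roots to obtain
\begin{equation*}
	\| \rho^m - b^m \|_{Y} \leq \frac{1}{1-b} \| \rho - b \|_{Y}
\end{equation*}
pathwise, i.e. $\mathbb{Q}$-almost surely on $\mcb{D}([0,T], \mcb{M}^{+})$. Taking expectation under $\mathbb{Q}$ and applying Proposition \ref{propestenergstat} then gives
\begin{equation*}
	\mathbb{E}_{\mathbb{Q}}\big[ \| \rho^m - b^m \|_{Y} \big] \leq \frac{1}{1-b} \, \mathbb{E}_{\mathbb{Q}}\big[ \| \rho - b \|_{Y} \big] =: K_2 < \infty,
\end{equation*}
and $K_2$ depends only on $b$ (via Assumption (A)) and on the constant produced by Proposition \ref{propestenergstat}, but not on $m$, which is exactly the uniform bound claimed.

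I do not foresee any real obstacle here: the entire content of the corollary is the observation that the factorization $\lambda^m - b^m = (\lambda-b)\sum_{i=0}^{m-1}\lambda^i b^{m-1-i}$ bounds the $m$-th power difference by the difference itself, times a constant that is summable in $m$ because $b < 1$. The only mild care needed is to note that the bound $\rho \in [0,1]$ is available because $\mathbb{Q}$ is a limit point of $(\mathbb{Q}_n)_{n\geq 1}$ induced by an exclusion process, as recorded right after \eqref{defdens}; and that the resulting inequality between $Y$-norms is pathwise, so that Fubini/monotonicity on the product $\mathbb{Q} \otimes dt\,du$ transfers the inequality from pointwise bound to expectation of norm.
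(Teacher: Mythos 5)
Your proof is correct and follows exactly the paper's reasoning: exploit the pointwise factorization $|\lambda^m-b^m| \leq (1-b)^{-1}|\lambda-b|$ uniformly in $m$ (valid because $\rho \in [0,1]$ and $b \in (0,1)$), upgrade it to the $Y$-norm pathwise, and conclude by taking expectation under $\mathbb{Q}$ via Proposition \ref{propestenergstat}. There is nothing to add.
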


Now we characterize a limit point $\mathbb{Q}$ in a more detailed way. More precisely, we prove that a transformation of the density $\rho$ given in \eqref{defdens} belongs to some fractional Sobolev space. 

For the remainder of this subsection, we denote $N:=C_c^{0,0,0} ( [0,T] \times \mathbb{R}^2)$. Moreover, for every $\epsilon \in (0,1]$, we define $O(\epsilon)$ and $Y(\epsilon)$ by
\begin{equation} \label{defOYeps}
	O(\epsilon):=\{(v, v) \in \mathbb{R}^2: |u-v| \geq \epsilon \}; \quad Y(\epsilon):=L^2 ( [0,T] \times \mathbb{R}^2, dt \otimes d \mu_{\epsilon } ),
\end{equation}
where  $\mu_{ \epsilon }$ is the measure 
$
(u,v) \in  \mathbb{R}^2 \rightarrow \mathbbm{1}_{ \{|u-v| \geq \epsilon \} }|u-v|^{-1-\gamma}.
$
Denote the norm associated to the Hilbert space $Y(\epsilon)$ by $\| \cdot \|_{Y(\varepsilon)}$. Now we state a preliminary lemma that will be useful in what follows. 
\begin{lem} \label{lemdenseps}
	For every $\epsilon >0$, the metric space $(N, \| \cdot \|_{Y(\epsilon)})$ is dense in the metric space $\big( Y(\epsilon), \| \cdot \|_{Y(\epsilon)} \big)$ and the metric space $\big( Y(\epsilon), \| \cdot \|_{Y(\epsilon)} \big)$ is separable.
\end{lem}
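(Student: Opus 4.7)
The plan is to exploit the fact that on the support of $\mu_\epsilon$ the weight is uniformly bounded: since $|u-v|\geq\epsilon$ there, the density $\mathbbm 1_{\{|u-v|\geq\epsilon\}}|u-v|^{-1-\gamma}$ with respect to Lebesgue measure is bounded by $\epsilon^{-1-\gamma}$. Consequently $dt\otimes d\mu_\epsilon$ is a locally finite Borel measure on the locally compact, second-countable space $[0,T]\times\mathbb{R}^2$, and any bound in $Y(\epsilon)$ of a compactly supported function is controlled by the sup-norm times the Lebesgue volume of its support. This single observation reduces weighted $L^2$ approximation to standard facts about Radon measures on Polish spaces.

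For density, I would approximate an arbitrary $f\in Y(\epsilon)$ in three steps. First, truncate: set $K_R:=[0,T]\times[-R,R]^2$ and use dominated convergence to conclude $f\mathbbm 1_{K_R}\to f$ in $Y(\epsilon)$ as $R\to\infty$. Second, approximate $f\mathbbm 1_{K_R}$, which belongs to $L^2$ of the finite Radon measure $dt\otimes d\mu_\epsilon$ restricted to the compact set $K_R$, by a continuous function on $K_R$; this is classical (Lusin's theorem, or convolve against a smooth Lebesgue mollifier and use the uniform bound on the weight $|u-v|^{-1-\gamma}\leq\epsilon^{-1-\gamma}$ on the support of $\mu_\epsilon$). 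Third, multiply by a continuous cutoff supported on a slightly larger box which equals $1$ on $K_R$; the product lies in $N$ and the extra error in $Y(\epsilon)$-norm is negligible.

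For separability, I would construct a countable $Y(\epsilon)$-dense subset of $N$ by enumerating compact boxes $K_R$ for $R\in\mathbb{N}$, taking polynomials with rational coefficients on each such box (Stone--Weierstrass), and multiplying by continuous cutoffs with rational parameters. For $\varphi\in N$ supported in $K_R$, the inequality
\begin{equation*}
\|\varphi\|_{Y(\epsilon)}^2 \;\leq\; \epsilon^{-1-\gamma}\, T\, \mathrm{Vol}([-R,R]^2)\, \|\varphi\|_\infty^2
\end{equation*}
shows that uniform approximation on $K_R$ entails $Y(\epsilon)$ approximation. Hence this countable family is dense in $(N,\|\cdot\|_{Y(\epsilon)})$, and the density part already established then transfers separability from $N$ to all of $Y(\epsilon)$.

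The argument is essentially routine; the only mild subtlety is to make sure each approximation step lands back in $N$ (continuous and compactly supported simultaneously), which is handled by the cutoff in step three. No obstacle is substantial: the bounded-weight observation is what makes the weighted space behave like an ordinary $L^2$ on a Radon measure, so all classical density and separability results apply directly.
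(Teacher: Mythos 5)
The paper itself offers no proof of this lemma, remarking only that ``it is classical,'' so there is no argument in the source to compare against. Your argument is correct and supplies the expected classical details: the uniform bound $|u-v|^{-1-\gamma}\le\epsilon^{-1-\gamma}$ on $\{|u-v|\geq\epsilon\}$ makes $dt\otimes d\mu_\epsilon$ a Radon measure on the locally compact, second-countable space $[0,T]\times\mathbb{R}^2$, and then density of compactly supported continuous functions in $L^2$ and separability of $L^2$ are both standard (e.g.\ Rudin, Real and Complex Analysis, Thm.~3.14), which is exactly what you invoke. Two small points worth making explicit. In the mollification alternative you should first replace $f$ by $f\,\mathbbm{1}_{\{|u-v|\geq\epsilon\}}$, a modification on a $\mu_\epsilon$-null set; only then is $f\mathbbm{1}_{K_R}$ in Lebesgue $L^2$ (the weight is bounded below on $K_R\cap\{|u-v|\geq\epsilon\}$), so that Lebesgue-$L^2$ convergence of the mollification, combined with the upper bound on the weight, gives $Y(\epsilon)$ convergence. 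Also, since $f$ has no regularity in $t$, mollifying only in $(u,v)$ does not produce a function jointly continuous in $(t,u,v)$; either mollify in $t$ as well after extending across the endpoints of $[0,T]$, or simply rely on the Lusin/Urysohn route, which treats all variables simultaneously and lands directly in $N$.
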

We do not present the proof of last lemma since it is classical.
Now, for $\epsilon \in (0,1]$, we define the (random) linear functional $\ell^{(m)} _{\rho, \epsilon}: N \rightarrow \mathbb{R}$ by
\begin{align*}
	\forall H \in N, \quad \ell^{(m)} _{\rho, \epsilon}(H) : =  \int_0^T  \iint_{ O( \epsilon)}  \frac{ [ \rho^m(t,v) - \rho^m(t,u) ]H_t(u,v)}{|u - v|^{1 + \gamma}} \, du \, dv \, dt.
\end{align*} 
Following the same strategy applied to show Theorem 3.2 i) in \cite{BGJ2} it is possible to derive the next result. 
\begin{lem} \label{lemestfrac}
	Let $m \geq 1$. For every $\epsilon \in (0,1]$, let $O(\epsilon)$ and $Y(\epsilon)$ be given by \eqref{defOYeps}. Moreover, assume that there exist positive constants $K_0, K_1$ such that
	\begin{align}  \label{eq:mn}
		\forall i \geq 1, \forall H_1, \ldots, H_i \in N, \quad  \mathbb{E}_{\mathbb{Q}} \Big[ \max_{1 \leq \tilde{i} \leq i} \big\{ \ell^{(m)} _{\rho, \epsilon}(H_{\tilde{i}}) - K_0 \|  H_{\tilde{i}} \|_{Y(\epsilon)}^2 \big\} \Big]    \leq K_1.
	\end{align}
	Above, $K_0$ and $K_1$ are independent of $i$, $H_1, \ldots, H_i$ and $\epsilon$.  Furthermore, assume that 
	$ 
	\mathbb{E}_{\mathbb{Q}}  \big[ \| \rho^m - b^m \|_{Y} \big] < \infty. 
	$
	Then
	$$
	\mathbb{E}_{\mathbb{Q}} \Big[  \int_0^T \iint_{ \mathbb{R}^2 } \frac{[ \rho^m(t,v) - \rho^m(t,u) ]^2}{|u-v|^{1+\gamma}} \, du \, dv \, dt  \Big] < \infty. 
	$$
\end{lem}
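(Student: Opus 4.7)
The plan is to recognize the functional $\ell^{(m)}_{\rho,\epsilon}$ as the $Y(\epsilon)$-inner product against the function
\begin{equation*}
\Psi_{\rho,\epsilon}(t,u,v):=[\rho^m(t,v)-\rho^m(t,u)]\mathbbm{1}_{\{|u-v|\geq\epsilon\}},
\end{equation*}
i.e.\ $\ell^{(m)}_{\rho,\epsilon}(H)=\langle \Psi_{\rho,\epsilon},H\rangle_{Y(\epsilon)}$ for every $H\in N$, and to observe that the quantity in the conclusion is precisely $\lim_{\epsilon\to 0^+}\|\Psi_{\rho,\epsilon}\|_{Y(\epsilon)}^2$. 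The goal therefore reduces to establishing an $\epsilon$-uniform bound $\sup_{\epsilon\in(0,1]}\mathbb{E}_{\mathbb{Q}}[\|\Psi_{\rho,\epsilon}\|_{Y(\epsilon)}^2]<\infty$ and then passing to the limit by monotone convergence.

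First I would check that $\Psi_{\rho,\epsilon}\in Y(\epsilon)$ almost surely: writing $\rho^m(t,v)-\rho^m(t,u)=[\rho^m(t,v)-b^m]-[\rho^m(t,u)-b^m]$, using $(a-b)^2\leq 2a^2+2b^2$, and applying Fubini with the symmetric weight, one gets the deterministic bound
\begin{equation*}
\|\Psi_{\rho,\epsilon}\|_{Y(\epsilon)}^2 \leq \frac{8}{\gamma\epsilon^\gamma}\|\rho^m-b^m\|_Y^2,
\end{equation*}
which is $\mathbb{Q}$-a.s.\ finite by Corollary \ref{corestenergstat}. Hence $\ell^{(m)}_{\rho,\epsilon}$ extends continuously from $N$ to all of $Y(\epsilon)$, and completing the square (maximizer $H=\Psi_{\rho,\epsilon}/(2K_0)$) yields the almost-sure identity
\begin{equation*}
\sup_{H\in Y(\epsilon)}\big\{\ell^{(m)}_{\rho,\epsilon}(H)-K_0\|H\|_{Y(\epsilon)}^2\big\}=\frac{\|\Psi_{\rho,\epsilon}\|_{Y(\epsilon)}^2}{4K_0}.
\end{equation*}

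The central step is to replace this uncountable supremum by one over a countable family drawn from $N$, so that hypothesis \eqref{eq:mn} can be applied. By Lemma \ref{lemdenseps}, $Y(\epsilon)$ is separable and $N$ is dense in it, so I would fix a countable sequence $(H_j)_{j\geq 1}\subset N$ dense in $Y(\epsilon)$, with $H_1\equiv 0$. Since $H\mapsto\ell^{(m)}_{\rho,\epsilon}(H)-K_0\|H\|_{Y(\epsilon)}^2$ is $\mathbb{Q}$-a.s.\ continuous on $Y(\epsilon)$ (by the previous paragraph), almost surely the supremum along $(H_j)$ coincides with the one over $Y(\epsilon)$, so it equals $\|\Psi_{\rho,\epsilon}\|_{Y(\epsilon)}^2/(4K_0)$. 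Applying the monotone convergence theorem to the non-negative non-decreasing sequence $\max_{1\leq\tilde i\leq i}\{\ell^{(m)}_{\rho,\epsilon}(H_{\tilde i})-K_0\|H_{\tilde i}\|_{Y(\epsilon)}^2\}$ and invoking \eqref{eq:mn} gives $\mathbb{E}_{\mathbb{Q}}[\|\Psi_{\rho,\epsilon}\|_{Y(\epsilon)}^2]\leq 4K_0 K_1$, uniformly in $\epsilon\in(0,1]$.

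Finally, I would let $\epsilon\to 0^+$: the integrand $[\rho^m(t,v)-\rho^m(t,u)]^2|u-v|^{-1-\gamma}\mathbbm{1}_{\{|u-v|\geq\epsilon\}}$ is non-negative and increases pointwise (outside the Lebesgue-negligible diagonal) to $[\rho^m(t,v)-\rho^m(t,u)]^2|u-v|^{-1-\gamma}$, so one more application of the monotone convergence theorem yields the desired bound with the same constant $4K_0K_1$. I expect the density-and-continuity step to be the main obstacle, since it forces one to upgrade the hypothesis \eqref{eq:mn} --- stated only for finite subfamilies of the smooth class $N$ --- to a full Hilbert-space supremum; this is exactly where the separability of $Y(\epsilon)$ from Lemma \ref{lemdenseps} and the a priori integrability of $\Psi_{\rho,\epsilon}$ coming from Corollary \ref{corestenergstat} are indispensable.
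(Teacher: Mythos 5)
Your proof is correct. The key steps — recognizing $\ell^{(m)}_{\rho,\epsilon}$ as the $Y(\epsilon)$-inner product against $\Psi_{\rho,\epsilon}(t,u,v)=[\rho^m(t,v)-\rho^m(t,u)]\mathbbm{1}_{\{|u-v|\geq\epsilon\}}$, verifying a priori that $\Psi_{\rho,\epsilon}\in Y(\epsilon)$ a.s.\ using $\|\rho^m-b^m\|_Y<\infty$, completing the square to evaluate the supremum as $\|\Psi_{\rho,\epsilon}\|^2_{Y(\epsilon)}/(4K_0)$, upgrading from countable subfamilies of $N$ to the full supremum via Lemma \ref{lemdenseps} together with the continuity of $H\mapsto\ell^{(m)}_{\rho,\epsilon}(H)-K_0\|H\|^2_{Y(\epsilon)}$, and finishing with two applications of monotone convergence — all check out, and the $\epsilon$-uniform bound $\mathbb{E}_{\mathbb{Q}}[\|\Psi_{\rho,\epsilon}\|^2_{Y(\epsilon)}]\le 4K_0K_1$ is exactly what is needed. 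The paper itself omits the proof, deferring to Theorem 3.2 i) of \cite{BGJ2}, but the Riesz-representation/density/monotone-convergence scheme you execute is precisely the one that reference uses, so your proposal matches the intended argument.
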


\subsubsection{Useful bounds}
A particularly useful tool is the \textit{Dirichlet form} defined  by $\langle \sqrt{f},- n^{\gamma}\mcb  L \sqrt{f} \rangle_{\nu_b}$, where $f: \Omega \rightarrow \mathbb{R}$ is a density with respect to $\nu_b$ and for all functions $g,h: \Omega \rightarrow \mathbb{R}$, $\langle g,h \rangle_{\nu_b}$ denotes the scalar product in $L^{2} (\Omega, \nu_b )$. Now we define $\mcb D (\sqrt{f}, \nu_b )$ as
\begin{align} \label{defD}
	\mcb D (\sqrt{f}, \nu_b ) := \frac{1}{4}  \sum_{x,y } p( y - x )I_{x , y}  (\sqrt{f}, \nu_b ).
\end{align}  
where for every $x, y \in \mathbb{Z}$ 
\begin{align} \label{defIxy}
	I_{x,y}  (\sqrt{f}, \nu_b ) :=  \int_{\Omega}c^{(m)}_{x,y}(\eta)   \big[ \sqrt{f ( \eta^{x,y} ) } - \sqrt{f ( \eta ) } \big]^2 \, d \nu_b,
\end{align}
and  
$c^{(m)}_{x,y}(\eta)$ is given in \eqref{defrates}. From  \eqref{invexch} it is easy to check that 
\begin{align} \label{bound}
	\langle \mcb{L} \sqrt{f} , \sqrt{f} \rangle_{\nu_b} = -\frac{1}{2}   \mcb D (\sqrt{f}, \nu_b ),
\end{align}
for any density $f
$. 
Finally, we are ready to prove that $\rho$ satisfies the second condition of weak solutions in Definition \ref{eq:dif}. We present the proof only for the case $m \geq 2$, since for $m=1$ it is enough to apply Proposition 5.2 of \cite{CGJ2}.
\begin{prop} \label{estenergnlin}
	Assume \eqref{defcb} and $m \geq 2$. Then 
	\begin{align*}
		\mathbb{Q} \Big (  \rho^m - b^m  \in L^2 \big( 0, T ;  \mcb{H}^{\gamma/2}(\mathbb{R}) \big)  \Big) = 1.
	\end{align*}
\end{prop}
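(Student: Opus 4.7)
The plan is to apply Lemma \ref{lemestfrac} to the functional $\ell^{(m)}_{\rho,\epsilon}$. Its second hypothesis, $\mathbb{E}_{\mathbb{Q}}[\|\rho^m - b^m\|_Y] < \infty$, is already supplied by Corollary \ref{corestenergstat}, so the whole task reduces to verifying the first hypothesis, namely the energy estimate \eqref{eq:mn} with constants $K_0, K_1$ independent of the integer $i$, of the test functions $H_1, \ldots, H_i \in N$, and of $\epsilon \in (0,1]$. Once this is in hand, Lemma \ref{lemestfrac} immediately yields $\mathbb{Q}$-a.s. membership of $\rho^m - b^m$ in $L^2(0, T; \mcb{H}^{\gamma/2})$.

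To establish \eqref{eq:mn}, I first transfer the problem to the discrete level. For $H \in N$, introduce the discrete counterpart of $\ell^{(m)}_{\rho,\epsilon}(H)$,
$$\ell^{(m),n}_{\eta,\epsilon}(H) := \int_0^T \frac{1}{n^2}\sum_{\substack{x,y \in \mathbb{Z}\\ |y-x| \geq \epsilon n}} \frac{H_s(\tfrac{x}{n}, \tfrac{y}{n})\big[\prod_{i=0}^{m-1}\eta^n_s(y+i) - \prod_{i=0}^{m-1}\eta^n_s(x+i)\big]}{|(y-x)/n|^{1+\gamma}}\, ds.$$
Combining Lemma \ref{globrep} (replacement of products of $\eta$'s by suitable box averages), Portmanteau's theorem, and a Riemann-sum approximation in the spirit of Subsection \ref{sec:charac}, one reduces the bound on $\mathbb{E}_{\mathbb{Q}}[\max_{\tilde i}\{\ldots\}]$ to the analogous bound for the discrete functionals. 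At level $n$, the entropy inequality with parameter $a = An$ together with Assumption (A) gives
$$\mathbb{E}_{\mu_n}\Big[\max_{\tilde i \leq i}\{\ell^{(m),n}_{\eta,\epsilon}(H_{\tilde i}) - K_0 \|H_{\tilde i}\|^2_{Y(\epsilon),n}\}\Big] \leq \frac{C_b}{A} + \frac{\log i}{An} + \frac{1}{An}\max_{\tilde i}\log \mathbb{E}_{\nu_b}\big[e^{An\,\Psi_{\tilde i, n}}\big],$$
where the $\log i / (An)$-term vanishes as $n \to \infty$ for each fixed $i$. Feynman--Kac together with \eqref{bound} bounds the exponential moment by
$$\exp\Big(T\sup_{f}\Big\{\int \Phi^H_{s,n}(\eta) f(\eta)\,d\nu_b - \tfrac{n^\gamma}{2} \mcb{D}(\sqrt{f}, \nu_b)\Big\}\Big),$$
with the supremum running over densities $f$ with respect to $\nu_b$. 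It then suffices to show that this supremum is non-positive for a suitable $K_0 = K_0(A)$.

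The main obstacle is the control of this supremum. The key input is the telescoping identity
$$\prod_{i=0}^{m-1}\eta(y+i) - \prod_{i=0}^{m-1}\eta(x+i) = \sum_{k=0}^{m-1}\Big(\prod_{i=0}^{k-1}\eta(y+i)\Big)[\eta(y+k) - \eta(x+k)]\Big(\prod_{i=k+1}^{m-1}\eta(x+i)\Big),$$
which expresses the difference as a sum of quantities antisymmetric under the swaps $\sigma_{x+k, y+k}$. For each term, the elementary identity $f - f\circ\sigma = (\sqrt f - \sqrt{f\circ\sigma})(\sqrt f + \sqrt{f\circ\sigma})$ together with Young's inequality gives, for any weight $\beta_{x,y,k} > 0$,
$$\Big|\int(\cdots)[\eta(y+k) - \eta(x+k)] f\, d\nu_b\Big| \leq \frac{\beta_{x,y,k}}{2} I_{x+k, y+k}(\sqrt f, \nu_b) + \frac{C\, H_s(x/n, y/n)^2}{\beta_{x,y,k}\,|(y-x)/n|^{2(1+\gamma)}\, n^2}.$$
Choosing $\beta_{x,y,k}$ proportional to $n^\gamma p(y-x)$, the sum of the first terms is absorbed by $\tfrac{n^\gamma}{2}\mcb{D}(\sqrt f, \nu_b)$ via \eqref{defD}--\eqref{defIxy}, while the sum of the second terms is a Riemann-sum approximation to $K_0\|H\|^2_{Y(\epsilon)}$ for $K_0$ large. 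The genuine difficulty is that the rate $c^{(m)}_{x+k, y+k}(\eta)$ appearing inside $I_{x+k,y+k}$ need not be bounded below on the support of the telescoping prefactor $\prod_{i=0}^{k-1}\eta(y+i)\prod_{i=k+1}^{m-1}\eta(x+i)$: for $m=2$ this is ensured by Remark \ref{remsep}, but for $m \geq 3$ one must use the SSEP contribution built into \eqref{defrates} to reroute the long-range swap through an auxiliary chain of nearest-neighbour swaps, or group the telescoping summands in a way that exploits the structure of Remark \ref{ratpor}. Once this step is in place, taking $A$ large yields \eqref{eq:mn} with $K_1 = C_b / A$ and hence the proposition via Lemma \ref{lemestfrac}.
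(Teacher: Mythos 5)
Your high-level scaffolding matches the paper exactly: reduce to verifying \eqref{eq:mn} via Lemma~\ref{lemestfrac} and Corollary~\ref{corestenergstat}, transfer to the discrete level through the Replacement Lemma, Portmanteau and Riemann sums, and then control the resulting exponential moment via entropy, Jensen and Feynman--Kac with the Dirichlet-form bound \eqref{bound}. The gap lies in the single step you flag yourself but do not resolve, and it is precisely the crux of the whole estimate. Your telescoping identity decomposes $\prod_j\eta(y+j)-\prod_j\eta(x+j)$ into summands that are antisymmetric under the swap $\sigma_{x+k,y+k}$, but the accompanying prefactor $\prod_{i<k}\eta(y+i)\prod_{i>k}\eta(x+i)$ is generically $1$ on configurations where the rate $c^{(m)}_{x+k,y+k}(\eta)$ is $0$ (the bond $\{x+k,y+k\}$ is long-range, so the SSEP contribution in \eqref{defrates} is absent and there is no analogue of Remark~\ref{remsep}). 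Consequently, after the change of variables $\eta\mapsto\eta^{x+k,y+k}$ and Young's inequality, the quadratic piece you obtain is $\int\mathrm{prefactor}^2[\sqrt{f}-\sqrt{f\circ\sigma}]^2\,d\nu_b$, which is \emph{not} dominated by $I_{x+k,y+k}(\sqrt f,\nu_b)=\int c^{(m)}_{x+k,y+k}[\sqrt f-\sqrt{f\circ\sigma}]^2\,d\nu_b$, so the absorption into $\tfrac{n^\gamma}{2}\mcb D(\sqrt f,\nu_b)$ does not go through. Your first proposed escape (rerouting the long-range swap through nearest-neighbour chains using the SSEP term) is the moving-particle mechanism; in the energy estimate the relevant $|y-x|$ is of order $n$, and the $|r|^\gamma$ cost from Lemma~\ref{mpl} is not absorbable here.

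What the paper actually does at this point is not a grouping of your telescoping terms but a different algebraic identity, \eqref{algman}: the \emph{rate-weighted} increment $c^{(m)}_{x,y}(\eta)[\eta(y)-\eta(x)]$ itself decomposes as $B_m(\eta,y)-B_m(\eta,x)$ plus $C_m$-correction terms. Since $\tilde A_{x,y}(\eta)=\prod_j\eta(y+j)-\prod_j\eta(x+j)$ is one half of $B_m(\eta,y)-B_m(\eta,x)$ up to a shift by $m-1$, a discrete summation by parts in the smooth test function $G$ converts the sum involving $\tilde A_{x,y}(\eta)G_t(\tfrac{x}{n},\tfrac{y}{n})$ into a sum involving $c^{(m)}_{x,y}(\eta)[\eta(y)-\eta(x)]G_t(\tfrac{x}{n},\tfrac{y}{n})$, plus error terms carrying differences such as $G_t(\tfrac{x}{n},\tfrac{y}{n})-G_t(\tfrac{x+m-1}{n},\tfrac{y+m-1}{n})$, which vanish by uniform continuity of $G$. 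The rate $c^{(m)}_{x,y}$ is then \emph{built into} the bond $\{x,y\}$ one is actually swapping, so the single Young step with weight $E_{x,y}=c_\gamma|G_t(\tfrac{x}{n},\tfrac{y}{n})|^{-1}$ produces $\tfrac{E_{x,y}}{4}I_{x,y}(\sqrt f,\nu_b)$ directly, absorbed by the Dirichlet form, and the leftover $\tfrac{2m}{E_{x,y}}$ terms Riemann-sum to $\tfrac{m}{c_\gamma}\|G\|^2_{Y(\epsilon)}$, yielding \eqref{cladynnonlin} with $K_0=m/c_\gamma$. Without \eqref{algman} (or a concrete substitute that produces the correct rate inside the quadratic form), your argument does not close for $m\geq 3$, and for $m=2$ you would still need Remark~\ref{remsep} made quantitative rather than the naive per-bond telescoping.
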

\begin{proof}
	From Proposition \ref{propestenergstat}  it is enough to prove that
	\begin{align*}
		\mathbb{Q} \Big( \int_0^T \iint_{ \mathbb{R}^2 } \frac{[\rho^m_t(v) - \rho^m_t(u) ]^2}{|u-v|^{1+\gamma}} du \, dv \, dt   < \infty \Big) = 1.
	\end{align*}
	We do so by applying Lemma \ref{lemestfrac}. From Corollary \ref{corestenergstat}, we are done if we can prove \eqref{eq:mn}. With this in mind, fix $\epsilon \in (0,1]$, $j \geq 1$ and $G_1, \ldots, G_j \in N$. Now for every $\varepsilon, K_0 >0$, define $\Psi_{K_0,\varepsilon}(\pi):=  \max_{1 \leq i \leq j} \big\{ - K_0 \|G_i \|_{Y(\epsilon)} + I_{\varepsilon}(\pi, G_i) \big\} \in \mathbb{R}$, for any $\pi \in \mcb D([0,T], \mcb M^{+})$, where 
	\begin{align*}
		I_{\varepsilon}(\pi, G_i):= & \int_0^T  \iint_{O(\epsilon)}  \frac{ \Big[ \prod_{j=0}^{m-1} \langle \pi_t, \overrightarrow{i_{\varepsilon}^{ v + j \varepsilon } }\rangle - \prod_{j=0}^{m-1} \langle \pi_t, \overrightarrow{i_{\varepsilon}^{ u + j \varepsilon } }\rangle \Big] G_i(t,u,v) }{|u-v|^{1+\gamma}}  du \, dv \,  dt. 
	\end{align*}
	Following \cite{CDG}, since $0 \leq \rho \leq 1$ and $G_1, \ldots, G_j$ are fixed, we have from \eqref{defdens} and Lebesgue's Differentiation Theorem that
	\begin{align*}
		\varlimsup_{\varepsilon \rightarrow 0^{+}} \mathbb{E}_{\mathbb{Q}} \big[ \max_{1 \leq i \leq j} \{ \ell_{\rho}^{(m,\epsilon)} (G_i) - I_{\varepsilon}(\pi, G_i) \} \big]=0.
	\end{align*}
	Therefore, in order to obtain \eqref{eq:mn}, it is enough to prove that there exist $K_0, K_1$ such that $\varlimsup_{\varepsilon \rightarrow 0^{+}} \mathbb{E}_{\mathbb{Q}} [ \Psi_{K_0,\varepsilon}(\pi) ] \leq K_1$. Next for every $K_0, \varepsilon >0$, every $n \geq 1$ and every $G \in N$, define the random application $\tilde{Z}^{K_0, \varepsilon}_{n,G}: \mcb D ( [0,T], \Omega ) \rightarrow \mathbb{R}$ by
	\begin{align*}
		\tilde{Z}^{K_0, \varepsilon}_{n,G}:=     \int_0^T \Big\{ n^{\gamma-1} \sum_{ (x, y) \in  I^n(\epsilon) } &\Big[  \prod_{j=0}^{m-1} \overrightarrow{\eta}_t^{\varepsilon n}(y+ j \varepsilon n) - \prod_{j=0}^{m-1} \overrightarrow{\eta}_t^{\varepsilon n}(x + j \varepsilon n) \Big] \times \\
		\times & G(t, \tfrac{x}{n}, \tfrac{y}{n})|y-x|^{- \gamma - 1} \Big\} \, dt - K_0 \|G \|_{Y(\epsilon)},
	\end{align*}
	where $I^n(\epsilon):= \{ (x,y) \in  \cap \mathbb{Z}^2: |x-y| \geq \epsilon n \}$. Since $\mathbb{Q}$ is a limit point, there exists a subsequence  $(\mathbb{Q}_{n'})_{n' \geq 1}$ of $(\mathbb{Q}_{n})_{n \geq 1}$ converging weakly to $\mathbb{Q}$. Combining this with the fact that $\mathbb{Q}_n$ is induced by $\mathbb{P}_{\mu_n}$ for every $n \geq 1$ and that $\Psi_{K_0,\varepsilon}$ is bounded and lower semi-continuous with respect to the Skorohod topology of $\mcb{D} ([0,T], \mcb{M}^{+} )$ for every $K_0, \varepsilon > 0$, we get
	\begin{align*}
		&\varlimsup_{\varepsilon \rightarrow 0^{+}} \mathbb{E}_{\mathbb{Q}} [ \Psi_{K_0,\varepsilon}(\pi) ] = \varlimsup_{\varepsilon \rightarrow 0^{+}} \lim_{n' \rightarrow \infty} \mathbb{E}_{\mathbb{Q}_{n'}} [  \Psi_{K_0,\varepsilon}(\pi)   ] \\
		=& \varlimsup_{\varepsilon \rightarrow 0^{+}} \lim_{n' \rightarrow \infty} \mathbb{E}_{\mu_{n'}} \big[  \max_{1 \leq i \leq j} \{ \tilde{Z}^{K_0}_{n',G_i} \}  \big] \leq  \varlimsup_{\varepsilon \rightarrow 0^{+}} \varlimsup_{n \rightarrow \infty} \mathbb{E}_{\mu_{n}} \big[  \max_{1 \leq i \leq j} \{ \tilde{Z}^{K_0}_{n,G_i} \}  \big].
	\end{align*}
	This means that we are done if we can prove that
	\begin{equation} \label{claim2}
		\exists K_0, K_1 >0: \quad \varlimsup_{\varepsilon \rightarrow 0^{+}} \varlimsup_{n \rightarrow \infty} \mathbb{E}_{\mu_{n}} \big[  \max_{1 \leq i \leq j} \{ \tilde{Z}^{K_0}_{n,G_i} \}  \big] \leq K_1.
	\end{equation}
	We do so by applying the strategy in the proof of Proposition 5.3 in \cite{CDG}. For every $G \in N$, we define $(\Phi_n^G)_{n \geq 1}: [0,T] \times \mathbb{R} \rightarrow \mathbb{R}$ by
	\begin{align*}
		\forall t \in [0,T], \forall x \in \mathbb{Z}, \quad \Phi_n^G (t, \tfrac{x}{n} ) := n^{\gamma} \sum_{y: |x-y| \geq \epsilon n} G(t, \tfrac{x}{n}, \tfrac{y}{n}) |y-x|^{- \gamma - 1}. 
	\end{align*}
	Following the arguments in \cite{CDG}, we have that $(\Phi_n)^G_{n \geq 1}$ satisfies \eqref{boundrep}.  From Lemma \ref{globrep}, for every $G \in N$, it holds
	\begin{align*}
		\varlimsup_{\varepsilon \rightarrow 0^{+}} \varlimsup_{n \rightarrow \infty} \mathbb{E}_{\mu_n} \Big[   \int_0^T n^{\gamma-1} \sum_{x,y:|x-y| \geq \epsilon n}  A_{x,y}(\varepsilon n, \, \eta_t^n) G(t, \tfrac{x}{n}, \tfrac{y}{n}) |y-x|^{- \gamma - 1} \, dt \Big] =0,
	\end{align*}
	where for every $\eta \in \Omega$, any $x,y \in \mathbb{Z}$ and every $\ell \geq 1$, $ A_{x,y}(\ell, \eta)$ is given by
	\begin{align*}
		A_{x,y}(\ell, \eta) :=    \prod_{j=0}^{m-1} \overrightarrow{\eta}^{\ell}(x + j \ell)  - \prod_{j=0}^{m-1}  \eta(x+j)  +  \prod_{j=0}^{m-1} \overrightarrow{\eta}^{\ell}(y + j \ell)  - \prod_{j=0}^{m-1}  \eta(y+j).
	\end{align*}
	for any $G \in N$. Then in order to get \eqref{claim2}, it is enough to find $K_0, K_1 > 0$ such that
	\begin{align} \label{claim3}
		\varlimsup_{n \rightarrow \infty} \mathbb{E}_{\mu_n} \big[ \max_{1 \leq i \leq j} \{  Z^{K_0}_{n,G_i}   \}\big] \leq K_1,
	\end{align}
	where for any $K_0$, $n \geq 1$ and $G \in N$, $Z^{K_0}_{n,G}: \mcb D ( [0,T], \Omega ) \rightarrow \mathbb{R}$ is given by
	\begin{align*}
		Z^{K_0}_{n,G}:=     \int_0^T \Big\{ & n^{\gamma-1} \sum_{ (x, y) \in  I^n(\epsilon) }  \tilde{A}_{x,y}( \eta_t^n) G(t, \tfrac{x}{n}, \tfrac{y}{n})|y-x|^{- \gamma - 1}   -   \frac{K_0 \| G \|^2_{Y(\epsilon)}}{T}  \Big\} \, dt.
	\end{align*}
	Above, for any $\eta \in \Omega$ and every $x,y \in \mathbb{Z}$, $ \tilde{A}_{x,y}(\eta)$ is given by
	\begin{align*}
		\tilde{A}_{x,y}( \eta) :=  \prod_{j=0}^{m-1}  \eta(y+j) - \prod_{j=0}^{m-1}  \eta(x+j)   .
	\end{align*}
	Combining entropy and Jensen's inequalities with the inequality
	\begin{align*} 
		\varlimsup_{n \rightarrow \infty} \big( \max \{a_n, b_n\} \big)_{n \geq 1} \leq \max \big\{ \varlimsup_{n \rightarrow \infty} (a_n)_{n \geq 1}, \varlimsup_{n \rightarrow \infty} (b_n)_{n \geq 1} \big\}, 
	\end{align*}
	we get the following estimate.
	\begin{align*}
		\varlimsup_{n \rightarrow \infty} \mathbb{E}_{\mu_n} \big[ \max_{1 \leq i \leq j} \{  Z^{K_0}_{n,G_i}   \}\big]] \leq &  C_b + \max_{1 \leq i \leq j}  \varlimsup_{n \rightarrow \infty} \frac{1}{n} \log \Big(  \mathbb{E}_{\nu_b} \big[ \exp  (n Z^{K_0}_{n,G_i})  \big]  \Big) \Big) .
	\end{align*}
	Therefore, in order to show that \eqref{claim3} holds for $K_1=C_b$ and end the proof, it is enough to prove the following claim:
	\begin{equation}  \label{cladynnonlin}
		\exists K_0 >0:  \quad \sup_{G \in N} \varlimsup_{n \rightarrow \infty} \frac{1}{n} \log \Big(  \mathbb{E}_{\nu_b} \big[ \exp  (n Z^{K_0}_{n,G})  \big]  \Big) \Big) \leq 0.
	\end{equation}
	Keeping this in mind, we apply Feynman-Kac's formula (see Lemma A.1 in \cite{baldasso}) and bound the last $\varlimsup$ from above by 
	\begin{equation} \label{est2}
		\begin{split}
			\varlimsup_{n \rightarrow \infty} \int_0^T \sup_{f}   \Big\{ &  \sum_{x,y:|x-y| \geq \epsilon n} \frac{n^{\gamma-1}}{|y-x|^{ \gamma + 1}}  \int_{\Omega}\tilde{A}_{x,y}( \eta) G_t(\tfrac{x}{n}, \tfrac{y}{n})  f(\eta)\,d\nu_{b}  \\
			+ &n^{\gamma-1}\langle \mcb L \sqrt{f},\sqrt{f} \rangle_{\nu_{b}} - \frac{K_0 \| G \|^2_{Y(\epsilon)}}{T} \Big\} \, dt,  
		\end{split}
	\end{equation}
	where last supremum is carried over all the densities $f$ with respect to $\nu_{b}$. Recall \eqref{defrates}; in order to apply \eqref{bound}, the sum inside the supremum in \eqref{est2} will be given in terms of
	\begin{align*}
		\sum_{x,y:|x-y| \geq \epsilon n} \frac{n^{\gamma-1}}{|y-x|^{ \gamma + 1}} G_t (  \tfrac{x}{n},\tfrac{y}{n})  \int_{\Omega} [\eta (y) - \eta (x)] c^{(m)}_{x,y}( \eta) f(\eta)\,d\nu_{b}.
	\end{align*}
	For $n \geq 2 \epsilon^{-1}$, $\varepsilon n \geq 2$ and $c^{(m)}_{x,y}( \eta)=c^{(m,dif)}_{x,y}(\eta) + c^{(m,dif)}_{y,x}(\eta)$ when $|x-y| \geq \epsilon n$. Next define $\hat{A}_{m}( \eta, x, y  ):=C_m(\eta,y,x) - C_m(\eta,x,y)$ for every $\eta \in \Omega$ and every $x,y \in \mathbb{Z}$, where $C_m(\cdot,\cdot,\cdot)$ is given in \eqref{defCm}. Then from \eqref{algman}, the sum inside the supremum in \eqref{est2} is equal to
	\begin{align*}
		&   \sum_{x,y:|x-y| \geq \epsilon n} \frac{n^{\gamma-1}}{2|y-x|^{ \gamma + 1}} G_t (  \tfrac{x}{n},\tfrac{y}{n} )  \int_{\Omega} [\eta (y) - \eta (x)] c^{(m)}_{x,y}( \eta) \,  d\nu_{b}   \\
		+&     \sum_{x,y:|x-y| \geq \epsilon n} \frac{n^{\gamma-1}}{2|y-x|^{ \gamma + 1}}  [G_t( \tfrac{x}{n}, \tfrac{y}{n}) - G_t( \tfrac{x+m-1}{n}, \tfrac{y+m-1}{n}) ]   \int_{\Omega} \tilde{A}_{x,y}( \eta)  f(\eta)\,d\nu_{b}  \\
		+&  \sum_{x,y:|x-y| \geq \epsilon n} \frac{n^{\gamma-1}}{2|y-x|^{ \gamma + 1}} [G_t( \tfrac{x-1}{n}, \tfrac{y-1}{n}) - G_t( \tfrac{x}{n}, \tfrac{y}{n}) ]   \int_{\Omega} \hat{A}_{m}( \eta, x, y  )  f(\eta) \, d\nu_{b}.
	\end{align*}
	In particular, since $f$ is a density with respect to $\nu_{b}$ and there exists $b_G >0$ such that $G(t,u,v)=0$ when $|u| \geq b_G$ or $|v| \geq b_G|$, the sum of the terms in the second and third lines of last display is bounded from above by 
	\begin{align*}
		m C(G,m,n)  n^{\gamma-1} \sum_{x,y:|x-y| \geq \epsilon n, |x| + |y| \leq 4 b_G n } |y-x|^{- \gamma - 1} \lesssim m b_G \epsilon^{-\gamma} C(G,m,n),
	\end{align*} 
	where $C(G,m,n)$ is given by
	\begin{align*}
		C(G,m,n):= \sup_{t \in [0,T], \{(u_2-u_1)^2+ (v_2-v_1)^2 < 2m^2 n^{-2} \} } |G_t(u_2,v_2) - G_t(u_1,v_1)|. 
	\end{align*}
	Since $G$ is uniformly continuous and $\lim_{n \rightarrow \infty} C(G,m,n) =0$. Therefore we conclude that the display in \eqref{est2} is equal to
	\begin{equation} \label{est3}
		\begin{split}
			\varlimsup_{n \rightarrow \infty} \int_0^T \sup_{f}   \Big\{ &    \sum_{(x, y) \in I^n(\epsilon)}\frac{n^{\gamma-1}}{2|y-x|^{ \gamma + 1}}  G_t (  \tfrac{x}{n},\tfrac{y}{n} )   \int_{\Omega} [\eta (y) - \eta (x)] c^{(m)}_{x,y}( \eta) \,  d\nu_{b}   \\
			+ &n^{\gamma-1}\langle \mcb L \sqrt{f},\sqrt{f} \rangle_{\nu_{b}} - \frac{K_0 \| G \|^2_{Y(\epsilon)}}{T} \Big\} \, dt.
		\end{split}
	\end{equation}
	Next we bound the sum inside the supremum in \eqref{est3} from above by
	\begin{align*} 
		\sum_{ (x, y) \in I^n(\epsilon) }  \frac{n^{\gamma-1}}{2|y-x|^{ \gamma + 1}}   \big|G_t ( \tfrac{x}{n}, \tfrac{y}{n} )\big|  \Big|  \int_{\Omega} [ \eta(y) - \eta(x) ] c^{(m)}_{x,y}( \eta) f(\eta) \,  d \nu_b \Big|.
	\end{align*}
	From \eqref{invexch},  the fact that $c^{(m)}_{x,y}( \eta)=c^{(m)}_{y,x}( \eta)$, and Young's inequality and \eqref{defIxy}, we get
	\begin{align*}
		&\Big|  \int_{\Omega} [ \eta(y) - \eta(x) ] c^{(m)}_{x,y}( \eta)  f(\eta)  \, d \nu_b \Big| \\
		\leq &  \frac{1}{2} \int_{\Omega} c^{(m)}_{x,y}( \eta) |\sqrt{f(\eta)}-\sqrt{f(\eta^{x,y})} | [ \sqrt{f(\eta)}+\sqrt{f(\eta^{x,y})} ] \, d \nu_b \\
		\leq & \frac{m}{ E_{x,y}} \int_{\Omega}  [f(\eta) + f(\eta^{x,y})]   \, d \nu_b  + \frac{E_{x,y}}{4} I_{x,y}( \sqrt{f}, \nu_b ),
	\end{align*}
	for every $x,y \in \mathbb{Z}$ and every $E_{x,y} >0$. Above, we used the fact (which is due to \eqref{defrates} and Remark \ref{ratpor}) that $c_{x,y}^{(m)} \leq 2 m$. From \eqref{invexch} and the fact that $f$ is a density with respect to $\nu_b$, last display can be rewritten as
	\begin{align*}
		\frac{2m}{ E_{x,y}} + \frac{E_{x,y}}{4} I_{x,y}( \sqrt{f}, \nu_b ),
	\end{align*}
	therefore the sum inside the supremum in \eqref{est3} is bounded from above by
	\begin{equation} \label{dynnonlinbnd2}
		\sum_{ (x, y) \in I^n(\epsilon) } \frac{n^{\gamma-1}}{2|y-x|^{ \gamma + 1}}   \big|G_t ( \tfrac{x}{n}, \tfrac{y}{n} )\big|  \Big( \frac{E_{x,y}}{4} I_{x,y}( \sqrt{f}, \nu_b ) + \frac{2m}{ E_{x,y}} \Big).
	\end{equation}
	From \eqref{bound}, we rewrite the second term inside the supremum in \eqref{est3} as
	\begin{align*}
		- \frac{n^{\gamma-1}}{2} \mcb  D (\sqrt{f}, \nu_b) \leq - \frac{n^{\gamma-1}}{8} \sum_{ (x, y) \in I^n(\epsilon) }  c_{\gamma} |y-x|^{-1-\gamma}  I_{x,y}( \sqrt{f}, \nu_b ) .
	\end{align*}
	Combining last display with \eqref{dynnonlinbnd2}, we conclude that the expression inside the supremum in \eqref{est3} is bounded from above by
	\begin{align*}
		& \frac{n^{\gamma-1}}{8}  \sum_{ (x, y) \in I^n(\epsilon) } |y-x|^{-1-\gamma}  I_{x,y}( \sqrt{f}, \nu_b ) \Big[ E_{x,y}|G_t ( \tfrac{x}{n}, \tfrac{y}{n} )| - c_{\gamma} \Big] \\
		& - \frac{K_0 \| G \|^2_{  Y(\epsilon)}}{T}  + n^{\gamma-1}   \sum_{ (x, y) \in I^n(\epsilon) }   |G_t ( \tfrac{x}{n}, \tfrac{y}{n} )| |y-x|^{-1-\gamma}  \frac{m}{ E_{x,y}}.
	\end{align*}
	Choosing $E_{x,y} =  c_{\gamma} ( |G_t ( \tfrac{x}{n}, \tfrac{y}{n} )|)^{-1}$, last display can be rewritten as
	\begin{align*}
		- \frac{K_0 \| G \|^2_{  Y(\epsilon)}}{T} + \frac{m}{c_{\gamma}} n^{\gamma-1}   \sum_{ (x, y) \in I^n(\epsilon) }   [G_t ( \tfrac{x}{n}, \tfrac{y}{n} )]^2 |y-x|^{-1-\gamma},
	\end{align*}
	therefore the display in \eqref{est3} is bounded from above by
	\begin{align*}
		&\varlimsup_{n \rightarrow \infty} \int_0^T \Big\{ - \frac{K_0 \| G \|^2_{  Y(\epsilon)}}{T} + \frac{m}{c_{\gamma}} \frac{1}{n}^2   \sum_{ (x, y) \in I^n(\epsilon) }   [G_t ( \tfrac{x}{n}, \tfrac{y}{n} )]^2 \Big| \frac{y-x}{n} \Big|^{-1-\gamma} \Big\} \, dt \\
		=& - K_0 \| G \|^2_{  Y(\epsilon)} + \frac{m}{c_{\gamma}} \int_0^T \iint_{O(\epsilon)} \frac{[G_t ( \tfrac{x}{n}, \tfrac{y}{n} )]^2}{|u-v|^{1+\gamma}} du \; dv \; dt \\
		=& \Big(  \frac{m}{c_{\gamma}} -K_0 \Big) \| G \|^2_{  Y(\epsilon)}.
	\end{align*}
	Finally, by choosing $K_0 = m (c_{\gamma})^{-1}$, we get \eqref{cladynnonlin} and the proof ends.
\end{proof}

\subsection{Replacement lemmas} \label{sec:replem}

In this subsection, we present the proof of Lemma \ref{globrep} (we refer the interested reader to Section 6 of \cite{CDG} for more details). Let $\varepsilon >0$ and $n \geq 1$ such that $\varepsilon n^{\gamma/2} \geq 1$. In the first step, similarly to Lemma 5.3 in \cite{bonorino} and Lemma 6.2 in \cite{CDG}, only jumps of size $1$ are used to replace products of $\eta_s^n$'s by products of empirical averages in boxes of size $\ell=\varepsilon n^{\gamma/2}$. The later products are then  {replaced} by products of empirical averages in boxes of size $L=\varepsilon n$ in the second step, by applying long jumps. It is crucial that our dynamics always allows exchanges of particles between neighbor sites. For $m=2$, this is assured by Remark \ref{remsep} (this remark was applied often in Section 6 of \cite{CDG}). For $m \geq 3$, this is assured by \eqref{defrates}.

Next we illustrate the strategy for the first step. Given $\eta \in \Omega$ and $x \in \mathbb{Z}$, observe that for $m=2$, it holds
\begin{align*}
	&\prod_{i=0}^{m-1} \eta(x+i ) - \prod_{i=0}^{m-1} \overrightarrow{\eta}^{\ell} (x+  i \ell) = \eta(x ) \eta(x+1 ) - \overrightarrow{\eta}^{\ell} (x)  \overrightarrow{\eta}^{\ell} (x+ \ell) \\
	=& \eta(x ) [ \eta(x+1 ) - \overrightarrow{\eta}^{\ell} (x+ \ell)] + \overrightarrow{\eta}^{\ell} (x+ \ell) [\eta(x ) -  \overrightarrow{\eta}^{\ell} (x) ] .
\end{align*}
More generally, for every $m \geq 2$, it holds
\begin{align*}
	&\prod_{i=0}^{m-1} \eta(x+i ) - \prod_{i=0}^{m-1} \overrightarrow{\eta}^{\ell} (x+  i \ell)  = \sum_{j=1}^{m} B^{\ell}_{x,j}(\eta) \big[\eta(x+m-j)- \overrightarrow{\eta}^{\ell}\big(x+(m-j) \ell  \big) \big],
\end{align*}
where for every $x \in \mathbb{Z}$, $\eta \in \Omega$ and $j \in \{1, \ldots, m\}$, $B^{\ell}_{x,j}(\eta)$ is defined by 
\begin{align} \label{defbxjeta}
	B^{\ell}_{x,j}(\eta):= \prod_{i=0}^{m-j-1} \eta(x+i)  \prod_{i=m-j+1}^{k-1} \overrightarrow{\eta}^{\ell}(x +i \ell ).
\end{align}
This motivates us to state next result. 
\begin{lem} \label{lemrep1}
	Let $t \in [0,T]$. Assume $(\Phi_{n})_{n \geq 1}: [0,T] \times \mathbb{R} \rightarrow \mathbb{R}$ satisfies \eqref{boundrep} and denote $\ell = \varepsilon n^{\gamma/2}$. Then for every $j \in \{1, \ldots, m\}$, it holds
	\begin{equation*} 
		\varlimsup_{\varepsilon \rightarrow 0^{+}}\varlimsup_{n \rightarrow \infty}\mathbb{E}_{\mu_n} \Big[ \Big| \int_{0}^{t} \sum_{x} \Phi_{n} (s, \tfrac{x}{n} ) \frac{B^{\ell}_{x,j}(\eta_s^n)}{n}  \big[\eta_{s}^n(x+m-j)- \overrightarrow{\eta}_{s}^{\ell}\big(x+(m-j) \ell  \big) \big] \, ds \, \Big| \Big] =0.
	\end{equation*}
	In particular, we have
	\begin{equation*}  
		\varlimsup_{\varepsilon \rightarrow 0^{+}}\varlimsup_{n \rightarrow \infty}\mathbb{E}_{\mu_n} \Big[ \Big| \int_{0}^{t}\frac{1}{n}\sum_{x} \Phi_{n} (s, \tfrac{x}{n} )\Big( \prod_{i=0}^{m-1} \eta^n_{s}(x+i) - \prod_{i=0}^{m-1} \overrightarrow{\eta}_{s}^{\ell}(x+ i \ell ) \Big) \,  ds \, \Big| \Big] =0.
	\end{equation*}
\end{lem}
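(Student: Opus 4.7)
The plan is to adapt the classical replacement-lemma argument of the entropy method, exploiting the fact that in our dynamics nearest-neighbor exchanges are always permitted with rate at least one — by Remark \ref{remsep} for $m=2$ and by the SSEP contribution in \eqref{defrates} for $m \geq 3$. Since $B^{\ell}_{x,j}(\eta) \in [0,1]$ by definition \eqref{defbxjeta}, the quantity inside the absolute value is, up to the factor $\Phi_n(s,x/n)/n$, a sum of differences $\eta(x+m-j) - \overrightarrow{\eta}^{\ell}(x+(m-j)\ell)$ that must be controlled uniformly. Denoting by $X_n$ the integral inside $|\cdot|$ and using the entropy inequality together with Assumption (A), for any $A>0$ we bound $\mathbb{E}_{\mu_n}[|X_n|]$ by $C_b/A$ plus $(An)^{-1}\log \mathbb{E}_{\nu_b}[\cosh(AnX_n)]$. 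By Feynman-Kac (Lemma A.1 in \cite{baldasso}) and by \eqref{bound}, the task reduces to estimating
\begin{equation*}
\int_{0}^{t} \sup_{f}\Big\{\frac{1}{n}\sum_x \Phi_n(s,\tfrac{x}{n})\int B^{\ell}_{x,j}(\eta)\big[\eta(x+m-j)-\overrightarrow{\eta}^{\ell}(x+(m-j)\ell)\big]\,f\,d\nu_b - \frac{n^{\gamma-1}}{2A}\mcb D(\sqrt{f},\nu_b)\Big\}\,ds,
\end{equation*}
where the supremum runs over densities $f$ with respect to $\nu_b$.

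Next I would decompose the difference by telescoping: write $\overrightarrow{\eta}^{\ell}(x+(m-j)\ell) = \frac{1}{\ell}\sum_{i=1}^{\ell}\eta(x+(m-j)\ell+i)$ and represent each term $\eta(x+m-j)-\eta(x+(m-j)\ell+i)$ as a sum of at most $m\ell$ nearest-neighbor increments $\eta(k)-\eta(k+1)$. Using the exchange invariance \eqref{invexch} of $\nu_b$, each such increment satisfies
\begin{equation*}
\int[\eta(k)-\eta(k+1)] B^{\ell}_{x,j}(\eta) f(\eta)\,d\nu_b = \int \eta(k) B^{\ell}_{x,j}(\eta)\big[f(\eta)-f(\eta^{k,k+1})\big]\,d\nu_b + E_{x,j,k}(f),
\end{equation*}
where $E_{x,j,k}(f)$ collects the contribution coming from the non-invariance of $B^{\ell}_{x,j}$. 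Applying $f - f^{k,k+1} = (\sqrt{f}-\sqrt{f^{k,k+1}})(\sqrt{f}+\sqrt{f^{k,k+1}})$ and Young's inequality with a parameter $\beta_{k}>0$, the main term is bounded by $\frac{\beta_k}{4} I_{k,k+1}(\sqrt{f},\nu_b) + \beta_k^{-1}$, with $I_{k,k+1}$ given by \eqref{defIxy}.

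The key absorption step is that, since $c^{(m)}_{k,k+1}(\eta) \geq 1$ for every $\eta$, the Dirichlet form $\mcb D(\sqrt{f},\nu_b)$ dominates $\frac{p(1)}{4} \sum_k I_{k,k+1}(\sqrt{f},\nu_b)$. Choosing $\beta_k$ uniformly and $A$ of order $\ell^{-1}n^{\gamma-1}$, the $I_{k,k+1}$ contributions coming from the $O(m\ell)$ telescopic steps per pair $(x,i)$ are fully swallowed by the $-\frac{n^{\gamma-1}}{2A}\mcb D(\sqrt{f},\nu_b)$ term, leaving an additive residue of order $\ell^2/n^{\gamma} = \varepsilon^2$ after summing over $x$, $i$ and $k$ and using $\frac{1}{n}\sum_x \sup_s|\Phi_n(s,x/n)| \leq M_1$. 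This identifies precisely why the scale $\ell = \varepsilon n^{\gamma/2}$ is chosen: it is exactly the threshold at which the moving-particle cost matches the available diffusive budget $n^{\gamma}$.

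The main obstacle is handling the error $E_{x,j,k}(f)$: the factor $B^{\ell}_{x,j}$ is not invariant under $\eta \mapsto \eta^{k,k+1}$, but it depends on the configuration only through products of empirical averages over boxes of size $\ell$, so a single exchange perturbs $B^{\ell}_{x,j}$ by $O(m/\ell)$ for the at most $O(m\ell)$ sites $k$ where the exchange falls inside a relevant window, and by $0$ otherwise; summing yields a negligible contribution of order $\ell^{-1}$. Finally, taking $n \to \infty$ and then $\varepsilon \to 0^+$ kills all residual errors. For the \emph{in particular} statement it suffices to invoke the telescopic identity stated just before the lemma together with the triangle inequality, reducing the product difference to a sum of $m$ terms of the type already controlled.
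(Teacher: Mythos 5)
Your overall architecture matches the paper's: entropy inequality plus Feynman--Kac, telescoping the difference $\eta_s^n(x+m-j)-\overrightarrow\eta_s^{\ell}(x+(m-j)\ell)$ into nearest-neighbour increments, Young's inequality, counting bond multiplicities, and absorbing into the Dirichlet form. But there is a genuine gap at the step you flag as ``the main obstacle,'' and it concerns the error $E_{x,j,k}(f)$.

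You assert that $B^{\ell}_{x,j}$ is \emph{not} invariant under the nearest-neighbour swaps appearing in the telescope, and that this is acceptable because $B^{\ell}_{x,j}$ ``depends on the configuration only through products of empirical averages over boxes of size $\ell$,'' so a single swap perturbs it by $O(m/\ell)$. Both halves of this are wrong. From \eqref{defbxjeta}, $B^{\ell}_{x,j}$ is the product of the \emph{single-site} occupations $\eta(x),\ldots,\eta(x+m-j-1)$ times the empirical averages $\overrightarrow\eta^{\ell}(x+i\ell)$ for $i=m-j+1,\ldots,m-1$; the first factor is not a box average, and a swap touching one of those sites changes it by $O(1)$, not $O(1/\ell)$. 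Your bound $O(m/\ell)$ therefore does not cover $B^{\ell}_{x,j}$, and with an $O(1)$ perturbation the $\ell^{-1}$-smallness you invoke disappears, breaking the closing of the argument. The point the paper makes—and the reason the factor $B^{\ell}_{x,j}$ is defined with exactly the index $m-j$ omitted—is that this putative error is \emph{identically zero}: the telescoping bonds $\{z,z+1\}$ run through the interval $\{x+m-j,\dots,x+(m-j+1)\ell\}$, which is disjoint both from the single-site window $\{x,\dots,x+m-j-1\}$ and from the support $\{x+(m-j+1)\ell+1,\dots\}$ of the remaining box averages, so $B^{\ell}_{x,j}(\eta)=B^{\ell}_{x,j}(\eta^{z,z+1})$ for every relevant $z$. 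Without observing this disjointness your argument does not close; once you do, the change of variables $\eta\mapsto\eta^{z,z+1}$ produces the clean factor $f(\eta)-f(\eta^{z,z+1})$ with no extra term, and the rest of your estimate (Young, bond counting $\lesssim m\ell^2$, absorption into $\mcb D(\sqrt f,\nu_b)$, choice of the free parameter tuned to $\ell=\varepsilon n^{\gamma/2}$) goes through as in the paper.

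A smaller inaccuracy: you claim $c^{(m)}_{k,k+1}(\eta)\geq 1$ for all $\eta$. For $m=2$ this is false when $\eta$ vanishes on $\{k-1,k,k+1,k+2\}$; what is true and what the Dirichlet form actually uses is $\xi_{k,k+1}(\eta)\,c^{(m)}_{k,k+1}(\eta)\geq\xi_{k,k+1}(\eta)$ (Remark~\ref{remsep} for $m=2$, the added SSEP rate for $m\geq 3$). This suffices since the integrand of $I_{k,k+1}$ vanishes whenever $\xi_{k,k+1}(\eta)=0$, but as written your inequality is not correct.
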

\begin{proof}
	Combining entropy and Jensen's inequalities with Feynman-Kac's formula, we are done if we can prove that, for every $j \in \{1, \ldots, m\}$, 
	\begin{equation}\label{rl4}
		\begin{split}
			\sup_{f} \Big\{  \Big| & \frac{1}{n} \sum_{x} \Phi_{n} (s, \tfrac{x}{n} ) \int_{\Omega} B^{\ell}_{x,j}(\eta) \big[\eta(x+m-j)- \overrightarrow{\eta}^{\ell}\big(x+(m-j) \ell  \big) \big]\, d \nu_b \Big| \\
			+& n^{\gamma-1} \varepsilon \langle \mcb L\sqrt{f},\sqrt{f} \rangle_{ \nu_b } \Big\} ,
		\end{split}
	\end{equation}
	goes to zero, when we take first the $\limsup$ for $n \rightarrow \infty$ and afterwards the $\limsup$ for $\varepsilon \rightarrow 0^{+}$. In \eqref{rl4}, the supremum is carried over all densities $f$ with respect to $\nu_b$. In order to treat the first term inside this supremum, we observe that
	\begin{align*}
		\eta(x+m-j)- \overrightarrow{\eta}^{\ell}\big(x+(m-j) \ell  \big)   =  \frac{1}{\ell} \sum_{y=1}^{\ell} \sum_{z=x+m-j}^{x+(m-j) \ell + y -1} [\eta(z) - \eta(z+1) ].
	\end{align*}
	Therefore, the first term inside the supremum in \eqref{rl4} is bounded from above by
	\begin{equation}\label{rl7}
		\Big| \sum_{x} \Phi_{n} (s, \tfrac{x}{n} ) \sum_{y=1}^{\ell} \sum_{z=x+m-j}^{x+(m-j) \ell + y -1} \int_{\Omega} \frac{B^{\ell}_{x,j}(\eta)}{n\ell} [\eta(z)-\eta(z+1)]  f(\eta) \, d \nu_b \, \Big|.
	\end{equation}
	From \eqref{defbxjeta}, we have that $B_{x,j}(\eta)=B_{x,j}(\eta^{z,z+1})$ when $x+m-j \leq z \leq x+(m-j) \ell + y -1$ and $1 \leq y \leq \ell$, for every $x \in \mathbb{Z}$, $j \in \{1, \ldots, m\}$ and $\eta \in \Omega$. Then, by writing $f(\eta)=\frac{1}{2}f(\eta)+\frac{1}{2}f(\eta)$ and performing the change of variables $\eta \rightarrow \eta^{z,z+1}$, the display in \eqref{rl7} is equal to
	\begin{equation*}
		\Big| \sum_{x} \Phi_{n} (s, \tfrac{x}{n} ) \sum_{y=1}^{\ell} \sum_{z=x+m-j}^{x+(m-j) \ell + y -1}\int_{\Omega}  \frac{B^{\ell}_{x,j}(\eta)}{2n\ell} [\eta(z)-\eta(z+1)] [f(\eta)-f(\eta^{z,z+1}) ] \,d \nu_b  \Big|.
	\end{equation*} 
	By Young's inequality and \eqref{boundrep}, last display is bounded from above by
	\begin{align}
		& \frac{m \ell}{A } M_1 + \frac{A  M_2}{4n\ell p(1)} \sum_{x} \sum_{y=1}^{\ell} \sum_{z=x+m-j}^{x+(m-j) \ell + y -1}   p(1) I_{z,z+1} ( \sqrt{f}, \nu_{b} )
		\label{rlnl1b}.
	\end{align}
	for any $A>0$. Above, we used the facts that $[ B_{x,j}(\eta) ]^2  [\eta(z)-\eta(z+1)]^2 \leq 1$ (due to \eqref{defbxjeta} and $\eta(\cdot) \leq 1$) and $f$ is a density with respect to $\nu_b$. In order to apply \eqref{bound}, it is crucial to count the maximum number of times that a fixed bond $\{z_0,z_0+1\}$ appears in the triple sum in \eqref{rlnl1b}.
	
	Observe that for $x \leq z_0 - m \ell$ or $x> z_0$ this bond does not appear. And for every $x \in \{z_0 - m \ell +1, \ldots, z_0 \}$, $\{z_0,z_0+1\}$ is counted at most $\ell$ times (since $y$ ranges over $\{1, \ell\}$). Hence every bond $\{z_0,z_0+1\}$ is counted at most $4m \ell^2$ times in the triple sum in \eqref{rlnl1b}. Therefore, from \eqref{defD} and \eqref{bound}, the expression inside the supremum in \eqref{rl4} is bounded from above by
	\begin{align*}
		\frac{m \ell}{A } M_1 + \Big[ A\frac{M_2 m \ell}{4n p(1)} - \frac{n^{\gamma}}{n} \varepsilon \Big] \mcb D (\sqrt{f}, \nu_b ),
	\end{align*}
	for any $A>0$. Since $\ell=\varepsilon n^{\gamma/2}$, by choosing $A= 4 p(1) n^{\gamma} \varepsilon (M_2 m \ell)^{-1}$, last display is bounded from above by
	\begin{align*}
		\frac{M_1 M_2 m^2}{4  p(1)} \frac{\ell^2}{n^{\gamma} \varepsilon} \lesssim \frac{\ell^2}{n^{\gamma} \varepsilon} = \frac{(\varepsilon n^{\gamma/2})^2}{n^{\gamma} \varepsilon} = \varepsilon,
	\end{align*}
	which vanishes as $\varepsilon \rightarrow 0^+$, and the proof ends. 
\end{proof}
Next we state an auxiliary lemma which will be useful in the proof of Lemma \ref{lemrep2}. We omit its proof, since the strategy is analogous to the one used to show Lemma 6.3 in \cite{CDG}.   
\begin{lem} \textbf{(Moving Particle Lemma)} \label{mpl} 
	Fix $r \neq 0 \in \mathbb{Z}$ and $f$ a density with respect to $\nu_{b}$ on $\Omega$. For every $x \in \mathbb{Z}$, let  $\Omega_x:=\{  \eta \in \Omega : \eta(x+j) =1, j =1,2 , \ldots, k \}$. Then
	\begin{align*}
		\sum_{x} \int_{\Omega_x} \big[\sqrt{f \left( \eta^{x,x+r} \right) } - \sqrt{f \left( \eta \right) } \big]^2 \, d \nu_{b} \lesssim  |r|^{\gamma} \mcb D (\sqrt{f}, \nu_{b} ) .
	\end{align*}
\end{lem}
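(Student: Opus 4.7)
The plan is to use the restriction to $\Omega_x$ to guarantee that the porous medium exchange rate $c^{(m)}_{x,x+r}(\eta)$ is bounded below by one on that event, and then to extract the bound from the Dirichlet form through a short-jump decomposition. Concretely, I would take $k\ge m-1$ (which is the relevant range when this lemma is invoked in Lemma \ref{lemrep2}). On $\Omega_x$ the window $\{x+1,\dots,x+m-1\}$ is fully occupied, so the product $\prod_{i=1}^{m-1}\eta(x+i)$ appearing in Remark \ref{ratpor} equals $1$, and therefore $c^{(m)}_{x,x+r}(\eta)\ge 1$ on $\Omega_x$. The pointwise inequality $\mathbbm{1}_{\Omega_x}(\eta)\le c^{(m)}_{x,x+r}(\eta)$ combined with \eqref{defIxy} gives immediately
$$
\int_{\Omega_x}\bigl[\sqrt{f(\eta^{x,x+r})}-\sqrt{f(\eta)}\bigr]^2\,d\nu_b\;\le\; I_{x,x+r}(\sqrt f,\nu_b).
$$

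Now if one simply isolates the slice of bonds of length $r$ inside \eqref{defD}, one gets $\mcb D(\sqrt f,\nu_b)\ge\tfrac14 p(r)\sum_x I_{x,x+r}$ and hence, since $p(r)^{-1}\sim|r|^{\gamma+1}$, only the weaker estimate $|r|^{\gamma+1}\mcb D(\sqrt f,\nu_b)$. To recover the announced sharp exponent $|r|^\gamma$, I would split the long exchange through an intermediate site $x+s$ via the transposition identity
$$
\eta^{x,x+r}\;=\;\bigl((\eta^{x,x+s})^{x+s,x+r}\bigr)^{x,x+s},
$$
valid for every $s$ (as one checks by tracking the three values at $x$, $x+s$, $x+r$). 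Telescoping, Cauchy-Schwarz, and the invariance \eqref{invexch} of $\nu_b$ under single exchanges reduce the integral to three terms involving the bonds $\{x,x+s\}$ and $\{x+s,x+r\}$; the role of the hypothesis on $k$ is exactly that on $\Omega_x$ each of these auxiliary exchanges still has PM rate bounded below by one, so each term is controlled by an $I_{x',y'}(\sqrt f,\nu_b)$.

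The main obstacle is then to make the right choice of $s$: with a single $s$ the best one obtains is a sum of the form $|s|^{\gamma+1}+|r-s|^{\gamma+1}$ times $\mcb D$, which cannot beat $|r|^{\gamma+1}$. Following the strategy of Lemma 6.3 of \cite{CDG}, I would instead average over a dyadic family of intermediate scales $s$ matched to $|r|$. Because the Dirichlet form simultaneously controls all bonds of every length appearing in the average, the cost of each scale can be charged against the corresponding slice of $\mcb D$, and the aggregated bound is
$$
\lesssim\Bigl(\tfrac{1}{N}\sum_{s}|s|^{\gamma+1}+\tfrac{1}{N}\sum_{s}|r-s|^{\gamma+1}\Bigr)\mcb D(\sqrt f,\nu_b),
$$
which, with $N$ chosen of order $|r|$, gains the needed extra factor $|r|^{-1}$ and yields $|r|^\gamma\mcb D(\sqrt f,\nu_b)$. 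Organizing the averaging so that the PM windows remain fully occupied along each intermediate configuration after every swap, and keeping track of the shifts produced by the changes of variables, is the bookkeeping that constitutes the technical heart of the argument and motivates the precise choice of $k$ in the definition of $\Omega_x$.
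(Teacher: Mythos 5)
The paper itself omits the proof and defers to Lemma~6.3 of \cite{CDG}, so there is no in-text argument to compare against line by line; still, your sketch has two concrete problems, one arithmetical and one structural.

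First, the arithmetic of the averaging does not produce the claimed gain. If you charge each intermediate scale $s$ to the \emph{slice} $p(s)\sum_x I_{x,x+s}$ of $\mcb D$ (which you correctly identify as the key), then after averaging over $s\in\{1,\dots,N\}$ the bound you obtain is
\begin{align*}
\frac{1}{N}\sum_{s=1}^{N-1}\frac{1}{p(s)}\Bigl[p(s)\sum_x I_{x,x+s}\Bigr]
\;\le\;
\frac{1}{N}\,\max_{1\le s\le N-1}\frac{1}{p(s)}\;\cdot\;4\,\mcb D
\;\lesssim\;
\frac{N^{\gamma+1}}{N}\,\mcb D
\;=\;
N^{\gamma}\,\mcb D ,
\end{align*}
i.e.\ a \emph{maximum} over scales, not a sum. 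The formula you write, $\tfrac1N\bigl(\sum_s|s|^{\gamma+1}+\sum_s|r-s|^{\gamma+1}\bigr)\mcb D$, evaluates for $N\asymp|r|$ to $|r|^{\gamma+1}\mcb D$, exactly the weak bound you were trying to beat. The prose carries the right idea (``charge each scale against the corresponding slice'') but the displayed inequality does not implement it: as written it multiplies the whole Dirichlet form by each scale instead of splitting $\mcb D$ into its $s$-slices.

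Second, and more seriously, the claim that ``on $\Omega_x$ each of these auxiliary exchanges still has PM rate bounded below by one'' is false for the middle leg of the three-step path. After the change of variables $\eta\mapsto\eta^{x,x+s}$ (measure-preserving by \eqref{invexch}, and leaving $\Omega_x$ invariant when $s\notin\{1,\dots,k\}$), the middle term becomes $\int_{\Omega_x}\bigl[\sqrt{f(\eta^{x+s,x+r})}-\sqrt{f(\eta)}\bigr]^2 d\nu_b$, and to dominate it by $I_{x+s,x+r}(\sqrt f,\nu_b)$ you would need $c^{(m)}_{x+s,x+r}(\eta)\ge 1$ on $\Omega_x$. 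But the fixed block $\{x+1,\dots,x+k\}$ is adjacent to neither $x+s$ nor $x+r$ as soon as $s\ge k+2$ and $r\gg k$, so \emph{none} of the $2m$ windows appearing in Remark~\ref{ratpor} for the pair $(x+s,x+r)$ is contained in the block, and the rate can vanish on a positive $\nu_b$-measure subset of $\Omega_x$. The hypothesis on $k$ guarantees a positive rate only for exchanges in which one endpoint is $x$ (window $\{x+1,\dots,x+m-1\}$) or $x+k+1$ (window $\{x+k-m+2,\dots,x+k\}$); it does not help for a generic intermediate bond $\{x+s,x+r\}$. Since exactly this freedom in $s$ is what the averaging step requires, the degenerate rate is not ``bookkeeping'' here but the real obstruction, and the sketch does not resolve it. The reference proof in \cite{CDG} must circumvent this (for instance by a path decomposition in which every intermediate bond touches a site whose PM window is forced to be occupied); as it stands your decomposition does not.
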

Now, we illustrate the strategy for the second step. For every $m \geq 2$, it holds
\begin{align*}
	&\prod_{i=0}^{m-1} \overrightarrow{\eta}^{\ell} (x+  i \ell) - \prod_{i=0}^{m-1} \overrightarrow{\eta}^{\varepsilon n} (x+  i \varepsilon n)  \\
	=& \sum_{j=1}^{m} \tilde{B}^{\ell}_{x,j}(\eta) \big[\overrightarrow{\eta}^{\ell}\big(x+(m-j) \ell  \big)- \overrightarrow{\eta}^{\varepsilon n}\big(x+(m-j) \varepsilon n  \big) \big],
\end{align*}
where $\ell \leq \varepsilon n$ and for every $x \in \mathbb{Z}$, $\eta \in \Omega$ and $j \in \{1, \ldots, m\}$, $\tilde{B}^{\ell}_{x,j}(\eta)$ is defined by 
\begin{align} \label{defbtilxjeta}
	\tilde{B}^{\ell}_{x,j}(\eta):= \prod_{i=0}^{m-j-1} \overrightarrow{\eta}^{\ell}(x +i \ell )  \prod_{i=m-j+1}^{k-1} \overrightarrow{\eta}^{\varepsilon n}(x +i \varepsilon n ).
\end{align}
This motivates us to state next result. The proof is analogous to the proof of Lemma 6.5 in \cite{CDG}, therefore we omit some steps.
\begin{lem} \label{lemrep2}
	Assume $(\Phi_{n})_{n \geq 1}: [0,T] \times \mathbb{R} \rightarrow \mathbb{R}$ satisfies \eqref{boundrep} and denote $\ell = \varepsilon n^{\gamma/2}$. Then for every $j \in \{1, \ldots, m\}$, it holds
	\begin{equation*}
		\begin{split}
			\varlimsup_{\varepsilon \rightarrow 0^{+}}\varlimsup_{n \rightarrow \infty}\mathbb{E}_{\mu_n} \Big[ \Big| \int_{0}^{t}\frac{1}{n}\sum_{x} & \Phi_{n} (s, \tfrac{x}{n} ) \tilde{B}^{\ell}_{x,j}(\eta_s^n) \\
			& \times \big[ \overrightarrow{\eta}_{s}^{\ell}\big(x+(m-j) \ell  \big) - \overrightarrow{\eta}_{s}^{\varepsilon n}\big(x+(m-j) \varepsilon n  \big) \big] \, ds \, \Big| \Big] =0.
		\end{split}
	\end{equation*}
	In particular, we have
	\begin{equation*}  
		\varlimsup_{\varepsilon \rightarrow 0^{+}}\varlimsup_{n \rightarrow \infty}\mathbb{E}_{\mu_n} \Big[ \Big| \int_{0}^{t}\frac{1}{n}\sum_{x} \Phi_{n} (s, \tfrac{x}{n} )\Big[ \prod_{i=0}^{m-1} \overrightarrow{\eta}_{s}^{\ell}(x+i \ell ) - \prod_{i=0}^{m-1} \overrightarrow{\eta}_{s}^{\varepsilon n}(x+i  \varepsilon n ) \Big]  \, ds\, \Big| \Big] =0.
	\end{equation*}
\end{lem}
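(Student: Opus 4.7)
The strategy follows the classical entropy--Feynman-Kac scheme used for Lemma \ref{lemrep1}, with the key difference that the cost of exchanging occupations across distance $\varepsilon n$ must be paid via long jumps (using Lemma \ref{mpl}) rather than by chaining nearest-neighbour bonds. The plan is to prove the first identity; the ``in particular'' statement then follows at once from the telescoping identity displayed right before the lemma by summing over $j \in \{1,\ldots,m\}$.

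First, by the entropy inequality (relative to $\nu_b$) and Assumption (A), followed by Jensen's inequality and Feynman-Kac's formula, it suffices to show that for every $j \in \{1,\ldots,m\}$ the quantity
\[
\sup_{f}\Big\{\Big|\frac{1}{n}\sum_{x}\Phi_n(s,\tfrac{x}{n})\int_{\Omega}\tilde B^{\ell}_{x,j}(\eta)\big[\overrightarrow{\eta}^{\ell}(y_x)-\overrightarrow{\eta}^{\varepsilon n}(z_x)\big]f(\eta)\,d\nu_b\Big|+n^{\gamma-1}\varepsilon\,\langle \mcb L\sqrt{f},\sqrt{f}\rangle_{\nu_b}\Big\}
\]
tends to $0$ after taking $\limsup_n$ and then $\limsup_{\varepsilon\to 0^+}$, where $y_x=x+(m-j)\ell$, $z_x=x+(m-j)\varepsilon n$, and the supremum ranges over densities $f$ w.r.t.\ $\nu_b$. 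I would then expand the difference of averages as a double average,
\[
\overrightarrow{\eta}^{\ell}(y_x)-\overrightarrow{\eta}^{\varepsilon n}(z_x)=\frac{1}{\ell\,\varepsilon n}\sum_{i=1}^{\ell}\sum_{k=1}^{\varepsilon n}\bigl[\eta(y_x+i)-\eta(z_x+k)\bigr],
\]
and observe that for every admissible $(i,k)$ the sites $y_x+i$ and $z_x+k$ fall in the ``gap'' block of index $m-j$ which is excluded from the product defining $\tilde B^{\ell}_{x,j}$ in \eqref{defbtilxjeta}; in particular $\tilde B^{\ell}_{x,j}(\eta)=\tilde B^{\ell}_{x,j}(\eta^{y_x+i,z_x+k})$ whenever $\ell\le\varepsilon n$ and $n$ is large.

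Next, for each triple $(x,i,k)$ I would write $f=\tfrac12 f+\tfrac12 f$, apply the change of variables $\eta\mapsto \eta^{y_x+i,z_x+k}$ to one half (using \eqref{invexch} and the invariance of $\tilde B$ just established), and then invoke Young's inequality with parameter $A>0$ to bound the resulting expression by
\[
\frac{C}{A}\cdot\frac{\|\Phi_n\|_\infty\cdot M_1}{\text{(const)}}\;+\;\frac{A}{n\,\ell\,\varepsilon n}\sum_{x}\sum_{i,k}\bigl[\sqrt{f(\eta^{y_x+i,z_x+k})}-\sqrt{f(\eta)}\bigr]^2,
\]
up to integration against $d\nu_b$; here the $1/A$-term is controlled using \eqref{boundrep}. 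The Moving Particle Lemma (Lemma \ref{mpl}) then bounds the sum of squared differences over $x$, for each fixed displacement $r=z_x+k-y_x-i$ of size $|r|\lesssim m\varepsilon n$, by $|r|^{\gamma}\mcb D(\sqrt{f},\nu_b)\lesssim (\varepsilon n)^{\gamma}\mcb D(\sqrt{f},\nu_b)$.

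Finally, I would choose $A$ so that the Dirichlet-form coefficient coming from Young/MPL exactly cancels the negative contribution $n^{\gamma-1}\varepsilon\langle \mcb L\sqrt{f},\sqrt{f}\rangle_{\nu_b}=-\tfrac12 n^{\gamma-1}\varepsilon\,\mcb D(\sqrt{f},\nu_b)$ from \eqref{bound}; concretely, setting $A\sim n^{\gamma-1}\varepsilon/(\varepsilon n)^{\gamma}=\varepsilon^{1-\gamma}n^{-1}$ makes the remaining constant of order $1/A\cdot 1\lesssim \varepsilon^{\gamma-1}n$, which, after normalising by the factors $\tfrac{1}{\ell\,\varepsilon n}$ and summing $\tfrac1n\sum_x|\Phi_n|$, collapses (using $\ell=\varepsilon n^{\gamma/2}$) to something that vanishes first as $n\to\infty$ and then as $\varepsilon\to 0^{+}$, analogously to the closing estimate of Lemma \ref{lemrep1}. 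The main obstacle is the bookkeeping in the last step: one must verify that the scaling relations for $\ell=\varepsilon n^{\gamma/2}$, $\varepsilon n$, and the MPL cost $(\varepsilon n)^{\gamma}$ indeed fit together for all $\gamma\in(0,2)$, and in particular that the averaging by $1/(\ell\,\varepsilon n)$ recovers enough smallness to beat the large displacement cost $(\varepsilon n)^{\gamma}$; a careful accounting of the multiplicity with which each bond $\{u,u+r\}$ is visited as $(x,i,k)$ varies (as was done after \eqref{rlnl1b} for nearest-neighbour bonds) is what makes the balancing work.
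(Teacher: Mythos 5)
The skeleton of your plan (entropy + Feynman--Kac reduction to a variational problem, the telescoping identity to get the ``in particular'' statement, and the use of the Moving Particle Lemma to pay for the long exchange) is the right one, and your observation that the exchanged sites fall in the ``gap'' block excluded from $\tilde B^{\ell}_{x,j}$ is correct and important. However, there is a genuine gap in the middle of the argument.

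You apply Lemma~\ref{mpl} directly to bound $\sum_x\int_\Omega\bigl[\sqrt{f(\eta^{y_x+i,z_x+k})}-\sqrt{f(\eta)}\bigr]^2\,d\nu_b$, but the Moving Particle Lemma as stated only controls this sum when the integral is restricted to the set $\Omega_x=\{\eta:\eta(x+1)=\cdots=\eta(x+k)=1\}$. This restriction is not a technicality: the porous medium rates $c^{(m)}_{x,y}(\eta)$ are degenerate, and a long exchange can only be routed through the generator if there is a fully occupied window of $m-1$ sites adjacent to one of the endpoints. Unconditionally bounding the squared difference over all of $\Omega$ would require the simple-exclusion version of the MPL, which does not hold here. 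The paper's proof handles this by splitting configurations according to whether $\Omega_i(x,y)$ holds (there are at least $m$ particles in one of the two $\ell$-boxes adjacent to the exchange endpoints); on the complement the contribution is shown to be $O(1/\ell)$ because both endpoints are typically empty, and on $\Omega_i(x,y)$ one first expends $O(m\ell)$ nearest-neighbour moves to assemble an auxiliary block of $m$ consecutive particles next to $z_{x,y}$, then applies the MPL to the long bond, then disassembles the block. This three-stage decomposition (yielding \eqref{expr4c} and \eqref{expr6c}) is entirely absent from your plan, and without it the MPL step has no valid justification.

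A secondary point: the final balance uses the splitting $n^{\gamma-1}\varepsilon^{\gamma/2}\langle\mcb L\sqrt f,\sqrt f\rangle_{\nu_b}$ (as in \eqref{expect1b}), not $n^{\gamma-1}\varepsilon$. Your choice of $\varepsilon$ here would need to be revisited once the nearest-neighbour cost $O(\ell^2)$ per bond and the MPL cost $O((\varepsilon n)^\gamma)$ are both present; with the paper's choices $A\sim n^\gamma\varepsilon^{\gamma/2}/\ell$ and $\tilde A\sim\varepsilon^{-\gamma/2}$ the two pieces collapse to $\varepsilon^{2-\gamma/2}+\varepsilon^{\gamma/2}$, which vanishes for all $\gamma\in(0,2)$. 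Your plan gestures at ``careful accounting of multiplicity'' but does not record the two distinct sources of Dirichlet-form cost (nearest-neighbour shuffling vs.\ the long exchange), which is precisely where the bookkeeping lives.

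Finally, a small remark on the decomposition of the average difference: you pair each site in the small box with each site in the large box (the standard double-average identity), whereas the paper writes $\varepsilon n=k\ell$ and pairs $z_{x,y}$ with its translates $z_{x,y}+\ell_i$. Both are algebraically valid, but the paper's choice makes the auxiliary-block construction and the multiplicity count cleaner, since each exchange has a well-controlled displacement $\ell_i$ and each bond appears $O(\ell^2)$ times.
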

\begin{proof}
	By applying the initial steps of the proof of Lemma \ref{lemrep1}, we are done if we can prove that, for every $j \in \{1, \ldots, m\}$, 
	\begin{equation}\label{expect1b}
		\begin{split}
			\sup_{f} \Big \{& \Big|  \int_{\Omega}\frac{1}{n}\sum_{x } \Phi_n(s, \tfrac{x}{n} )\tilde{B}^{\ell}_{x,j}(\eta) \big[ \overrightarrow{\eta}^{\ell}\big(x+(m-j) \ell  \big) - \overrightarrow{\eta}^{\varepsilon n}\big(x+(m-j) \varepsilon n  \big) \big] \, d \nu_b \,  \Big| \\
			+& n^{\gamma-1} \varepsilon^{\gamma/2} \langle \mcb L \sqrt{f}, \sqrt{f} \rangle_{\nu_{b}} \Big\} ,
		\end{split}
	\end{equation}
	goes to zero, when we take first the $\limsup$ for $n \rightarrow \infty$ and afterwards the $\limsup$ for $\varepsilon \rightarrow 0^{+}$. In \eqref{expect1b}, the supremum is carried over all densities $f$ with respect to $\nu_b$. In order to treat the first term inside this supremum, we write $\varepsilon n = k \ell$ and observe that
	\begin{align*}
		&\overrightarrow{\eta}^{\ell} \big(x+(m-j) \ell  \big) - \overrightarrow{\eta}^{\varepsilon n}\big(x+(m-j) \varepsilon n  \big) \\
		=& \frac{1}{k \ell} \sum_{i=0}^{k-1} \sum_{y=1}^{\ell} \big[ \eta \big(x+(m-j) \ell +y \big) - \eta \big(x+(m-j) \ell +y + \ell [i + (m-j) (k-1)] \big) \big] \\
		=& \frac{1}{k \ell} \sum_{i=0}^{k-1} \sum_{y=1}^{\ell} [ \eta (z_{x,y}) - \eta \big(z_{x,y} + \ell_i ) ],
	\end{align*}
	with $z_{x,y}:=x+(m-j) \ell+y$ and $\ell_{i}:=\ell [i + (m-j) (k-1)]$. Thus, the first term inside the supremum in \eqref{expect1b} is equal to
	\begin{align} \label{rl7b}
		\Big|  \sum_{x } \Phi_n(s, \tfrac{x}{n} ) \sum_{i=0}^{k-1} \sum_{y=1}^{\ell}   \int_{\Omega} \frac{ \tilde{B}^{\ell}_{x,j}(\eta) }{k \ell n}  [ \eta (z_{x,y}) - \eta (z_{x,y} + \ell_i ) ] f(\eta) \, d\nu_{b} \, \Big|.
	\end{align}
	From \eqref{defbtilxjeta}, we have that $\tilde{B}^{\ell}_{x,j}(\eta)=\tilde{B}^{\ell}_{x,j}(\eta^{z_{x,y},z_{x,y}+\ell_i})$ when $0 \leq i \leq k -1$ and $1 \leq y \leq \ell$, for every $x \in \mathbb{Z}$, $j \in \{1, \ldots, m\}$ and $\eta \in \Omega$. Then, by writing $f(\eta)=\frac{1}{2}f(\eta)+\frac{1}{2}f(\eta)$ and performing the change of variables $\eta \rightarrow \eta^{z_{x,y},z_{x,y}+\ell_i}$, the display in \eqref{rl7b} is equal to
	\begin{equation}\label{expr3b}
		\Big|  \sum_{x } \Phi_n(s, \tfrac{x}{n} ) \sum_{i=0}^{k-1} \sum_{y=1}^{\ell}   \int_{\Omega} \frac{\tilde{B}^{\ell}_{x,j}(\eta)}{2k \ell n}   [ \eta (z_{x,y}) - \eta (z_{x,y} + \ell_i ) ] [ f(\eta) - f(\eta^{z_{x,y}, z_{x,y} + \ell_i })  ] d\nu_{b}  \Big|.
	\end{equation}
	Our goal is to exchange the particles in the bond $\{z_{x,y} ,z_{x,y}  + \ell_i\}$. In a similar way as it is done in Lemma 6.5 of \cite{CDG}, For every $x \in \mathbb{Z}$, $y \in \{1, \ldots, \ell\}$ and every $i \in \{0, \ldots, m-1\}$, we denote
	\begin{align*}
		\Omega_i(x,y) := \Big\{\eta \in \Omega: \overrightarrow{\eta}^{\ell}(z_{x,y})\geq \frac{m}{\ell}\Big\} \cup \Big\{\eta \in \Omega: \overrightarrow{\eta}^{\ell}(z_{x,y} + \ell_{i})\geq \frac{m}{\ell}\Big\}.
	\end{align*}
	Thus, \eqref{expr3b} is bounded from above by
	\begin{equation}\label{expr3c1}
		\begin{split}
			\Big| \sum_{x} \Phi_{n}(s,\tfrac{x}{n} ) \sum_{i=0}^{k-1}    \sum_{y =1}^{\ell}  \int_{\Omega - \Omega_i(x,y)}& \frac{ \tilde{B}^{\ell}_{x,j}(\eta) }{2k \ell n}   [ \eta (z_{x,y}) - \eta (z_{x,y} + \ell_i ) ] \times\\&\times [ f(\eta) - f(\eta^{z_{x,y}, z_{x,y} + \ell_i })  ] \, d\nu_{b} \, \Big|\end{split}
	\end{equation}
	\begin{equation}\label{expr3c2}
		\begin{split}
			+\Big| \sum_{x} \Phi_{n}(s,\tfrac{x}{n} ) \sum_{i=0}^{k-1}    \sum_{y = 1}^{\ell}  \int_{\Omega_i(x,y)}& \frac{\tilde{B}^{\ell}_{x,j}(\eta)}{2k \ell n}  [ \eta (z_{x,y}) - \eta (z_{x,y} + \ell_i ) ] \times
			\\&\times[ f(\eta) - f(\eta^{z_{x,y}, z_{x,y} + \ell_i })  ] \, d\nu_{b} \, \Big|.
		\end{split}
	\end{equation}
	We note that for $\eta \in \Omega - \Omega_i(x,y)$, we have $ \eta (z_{x,y}) = \eta (z_{x,y} + \ell_i )=0$ for at least $\ell - 2m + 2$ values of $y \in \{1, \ldots, \ell\}$. Plugging this with $\frac{1}{n}\sum_{x} | \Phi_{n}(s,\tfrac{x}{n} )| \leq M_1$, $|\eta( \cdot )|\leq 1$ (which leads to $|\tilde{B}^{\ell}_{x,j}(\eta)| \leq 1$) and $f$ being a density with respect to $\nu_{b}$, we get that \eqref{expr3c1} is bounded from above by a constant times $1/\ell$. Due to our choice of $\ell$ and since  $\gamma>0$ , \eqref{expr3c1} vanishes as $n \rightarrow \infty$.
	
	It remains to deal with \eqref{expr3c2}, where we want to go from $\eta_{0,x,i,y}:=\eta$ to $\eta^{z_{x,y},z_{x,y} + \ell_i}$. If $\overrightarrow{\eta}^{\ell}(z_{x,y})\geq m/ \ell$,  the strategy is the following: for any configuration $ \eta \in \Omega_i(x,y)$, denote by $x_1, x_2, \ldots, x_m$ the positions of the $m$ particles inside the box $\{z_{x,y}+1, \ldots, z_{x,y} + \ell \}$ closest to $z_{x,y}$. With at most $m \ell$ nearest-neighbor jumps, we can move the particles at $x_1, x_2, \ldots, x_m$ to $z_{x,y}+1, z_{x,y}+2, \ldots, z_{x,y}+m$, respectively. 
	
	On the other hand, if $\eta\in\Omega_i(x,y)$ and $\overrightarrow{\eta}^{\ell}(z_{x,y}) < m/ \ell$ then necessarily we have $\overrightarrow{\eta}^{\ell}(z_{x,y}+\ell_{i})\geq  m / \ell \frac{k}{\ell}$; in this case, denote by $x_1, x_2, \ldots, x_m$ the positions of the $m$ particles inside the box $\{z_{x,y}+\ell_i+1, \ldots, z_{x,y} +\ell_i +  \ell \}$ closest to $z_{x,y}$. With at most $m \ell$ nearest-neighbor jumps, we can move the particles at $x_1, x_2, \ldots, x_m$ to $z_{x,y}+\ell_i+1, z_{x,y}+\ell_i+2, \ldots, z_{x,y}+\ell_i+m$, respectively. 
	
	In both cases, we denote the configuration with the group of at least $m$ particles in consecutive sites next to $z_{x,y}$ (resp. $z_{x,y} + \ell_i $) by $\eta_{1,x,i,y}$. Then, we exchange the particles in the bond $\{z_{x,y}, z_{x,y} + \ell_i \}$ by applying Lemma \ref{mpl}. At this point, our configuration is  $\eta_{2,x,i,y}:=(\eta_{1,x,i,y})^{z_x+y,z_x+y+\ell_i}$. Finally, we use nearest-neighbor jumps in order to bring the $m$ auxiliary particles back to their initial positions $x_1, x_2, \ldots, x_m$. We observe that our configuration now is exactly $\eta_{3,x,i,y}:=\eta^{z_{x,y},z_{x,y}+\ell_i}$.
	In this way, \eqref{expr3c2} is bounded from above by the sum of
	\begin{equation} \label{expr4c}
		\begin{split}
			\Big|  \sum_{x} \Phi_{n}(s,\tfrac{x}{n} ) &\sum_{i=0}^{k-1}    \sum_{y = 1}^{\ell}  \int_{\Omega_i(x,y)} \frac{\tilde{B}^{\ell}_{x,j}(\eta)}{2k \ell n}   [ \eta (z_{x,y}) - \eta (z_{x,y} + \ell_i ) ] \\
			\times & \big( [f(\eta) - f( \eta_{1,x,i,y} ) ] + [f ( \eta_{2,x,i,y} )  - f( \eta_{3,x,i,y} )  ] \big) \, d\nu_{b} \, \Big|,
		\end{split}
	\end{equation}
	\begin{equation} \label{expr6c}
		\begin{split}
			+\Big| \sum_{x} \Phi_{n}(s,\tfrac{x}{n} ) &\sum_{i=0}^{k-1}    \sum_{y = 1}^{\ell}  \int_{\Omega_i(x,y)} \frac{\tilde{B}^{\ell}_{x,j}(\eta)}{2k \ell n}   [ \eta (z_{x,y}) - \eta (z_{x,y} + \ell_i ) ] \\
			\times &  [f ( \eta_{1,x,i,y} )  - f( \eta_{2,x,i,y} )  ]  \, d\nu_{b} \, \Big|,
		\end{split}
	\end{equation}
	In order to treat \eqref{expr4c}, we write 
	\begin{align*}
		[f(\eta) - f( \eta_{1,x,i,y} ) ] + [f ( \eta_{2,x,i,y} )  - f( \eta_{3,x,i,y} )  ]  = \sum_{r \in I_{x,i,y}^{NN}} [f(\eta^{(r-1)})-f(\eta^{(r)}) ],
	\end{align*}
	where (for every fixed $x,i,y$) $I_{x,i,y}^{NN}$ is the set of bonds in which we use nearest-neighbor jumps. From Young's inequality and \eqref{boundrep}, \eqref{expr4c} is bounded from above by a constant times
	\begin{align*} 
		&    \frac{ \ell M_1}{A} + \frac{ A M_2  }{ k \ell n} \sum_{i=0}^{k-1}   \int_{\Omega}\sum_{x } \sum_{y = 1}^{\ell}  \sum_{r \in I_{x,i,y}^{NN}} \big[\sqrt{f(\eta^{(r-1)})}-\sqrt{f(\eta^{(r)})}\big]^2 \, d\nu_{b},
	\end{align*}
	for any $A>0$. Above we used the fact that the number of bonds of $I_{x,i,y}^{NN}$ is bounded by $C \ell$ (where $C$ is a positive constant independent of $x$, $i$ and $y$), $|\eta(\cdot)| \leq 1$ and $f$ is a density with respect to $\nu_{b}$. We observe that in the triple summation inside the integral over $\Omega$ above (here we fix $i \in \{0, \ldots, k-1\}$), the number of times that a fixed bond $\{z_0-1, z_0\}$ is counted is at most of order $\ell^2$. Thus, From \eqref{defD}, \eqref{expr4c} is bounded from above by a constant times
	\begin{align} \label{expr8b}
		\frac{ \ell M_1}{A} + \frac{A M_2}{n }\ell \,\mcb D (\sqrt{f},\nu_{b}), 
	\end{align}
	for any $A > 0$. Next, we treat \eqref{expr6c}, which deals mostly with long jumps. With a similar reasoning as we did with \eqref{expr4c}, \eqref{expr6c} is bounded from above  by
	\begin{align*}
		& \frac{M_1}{\tilde{A}} + \frac{\tilde{A}  M_2 }{k \ell n} \sum_{i=0}^{k-1}   \sum_{y = 1}^{\ell} \sum_{x } \int_{\Omega_i(x)}  \big[ \sqrt{f( \eta_{1,x,i,y})}  - \sqrt{ f \big( ( \eta_{1,x,i,y})^{z_x+y,z_x+y+\ell_i}  \big) }  \big]^2 \,  d \nu_{b} ,
	\end{align*} 
	for every $\tilde{A}>0$. Above we made use of \eqref{boundrep}. Applying Lemma \ref{mpl}, we conclude that \eqref{expr6c} is bounded from above  by a constant times
	\begin{align} \label{expr10b}
		\frac{M_1}{\tilde{A}} + \frac{\tilde{A} M_2 }{ k \ell n} \sum_{i=0}^{k-1}   \sum_{y = 1}^{\ell} (\ell_i)^{\gamma}  \mcb D(\sqrt{f},\nu_{b}) \leq \frac{M_1}{\tilde{A}} + \tilde{A} M_2 \varepsilon^{\gamma} m^{\gamma} n^{\gamma-1} \mcb  D (\sqrt{f},\nu_{b}), 
	\end{align}
	for any $\tilde{A} >0$. Above we used the fact that $\ell_i \leq m k \ell = m \varepsilon n$. Since $\ell=\varepsilon n^{\gamma/2}$, by choosing $A=n^{\gamma} \varepsilon^{\gamma/2} (2 M_2 \ell)^{-1}$ in \eqref{expr8b} and $\tilde{A}= \varepsilon^{-\gamma/2} (2 M_2 m^{\gamma} )^{-1}$ in \eqref{expr10b}, from \eqref{bound} we conclude that the display in \eqref{expect1b} is bounded from above by a constant times
	\begin{align*}
		M_1 M_2 \ell^2 \varepsilon^{-\gamma/2} n^{-\gamma} + M_1 M_2 \varepsilon^{\gamma/2} = M_1 M_2 ( \varepsilon^{2- \frac{\gamma}{2} } + \varepsilon^{\gamma/2} ),
	\end{align*}
	which vanishes for any $\gamma \in (0, 2)$ when $\varepsilon \rightarrow 0^{+}$, ending the proof.
\end{proof}

\quad

\thanks{ {\bf{Acknowledgements: }}
P.C. was funded by the Deutsche Forschungsgemeinschaft (DFG, German Research Foundation) under Germany's Excellence Strategy - EXC 2047/1- 390685813. 

P.G. thanks  FCT/Portugal for financial support through the
projects UIDB/04459/2020 and UIDP/04459/2020.   This project has received funding from the European Research Council (ERC) under  the European Union's Horizon 2020 research and innovative programme (grant agreement   n. 715734).

\quad

\Addresses

\end{document}